\newtheorem{theorem}{Theorem}[section]
\newtheorem{lemma}[theorem]{Lemma}
\newtheorem{proposition}[theorem]{Proposition}
\theoremstyle{definition}
\newtheorem{example}[theorem]{Example}
\theoremstyle{remark}
\newtheorem{remark}[theorem]{Remark}
\numberwithin{equation}{section}
\newcommand{\C}{\mathcal{C}}
\newcommand{\M}{\mathcal{M}}
\newcommand{\R}{\mathbb{R}}
\newcommand{\Pb}{\mathcal{P}}
\newcommand{\I}{\mathcal{I}}
\DeclareMathOperator*{\argmin}{arg\!\min}
\numberwithin{equation}{section}
\renewcommand{\theequation}{\arabic{section}.\arabic{equation}}
\newcommand{\red}[1]{\textcolor{red}{#1}}
\newcommand{\lnote}[1]{{\color{red}\textbf{Levon:} #1}}
\newcommand{\hnote}[1]{{\color{blue}\textbf{HLiu:} #1}}
\begin{document}

\makeatletter
\renewcommand{\email}[2][]{%
  \ifx\emails\@empty\relax\else{\g@addto@macro\emails{,\space}}\fi%
  \@ifnotempty{#1}{\g@addto@macro\emails{\textrm{(#1)}\space}}%
  \g@addto@macro\emails{#2}%
}
\makeatother

\title[AEPG] 
{Adaptive Preconditioned Gradient Descent with Energy}



\author{Hailiang Liu$^\dagger$}
\address{$^\dagger$Department of Mathematics, Iowa State University, Ames, IA 50011}
\email{hliu@iastate.edu}

\author{Levon Nurbekyan$^\ddagger$}
\address{$^\ddagger$Department of Mathematics, Emory University, Atlanta, GA 30322}
\email{lnurbek@emory.edu}

\author{Xuping Tian$^{\dagger*}$}

\author{Yunan Yang$^\mathsection$}
\address{$^\mathsection$Department of Mathematics, Cornell University, Ithaca, NY 14853}
\email{yunan.yang@cornell.edu}

\thanks{$^*$corresponding author: \url{xupingt@iastate.edu}}

\subjclass{65K10, 90C26}
\keywords{Adaptive step size, preconditioning matrix, natural gradient, constrained optimization, energy stability, convergence rates}

\begin{abstract}
We propose an adaptive step size with an energy approach for a suitable class of preconditioned gradient descent methods. We focus on settings where the preconditioning is applied to address the constraints in optimization problems, such as the Hessian-Riemannian and natural gradient descent methods. More specifically, we incorporate these preconditioned gradient descent algorithms in the recently introduced Adaptive Energy Gradient Descent (AEGD) framework. In particular, we discuss theoretical results on the unconditional energy-stability and convergence rates across three classes of objective functions. Furthermore, our numerical results demonstrate excellent performance of the proposed method on several test bed optimization problems.
\end{abstract}

\maketitle

\section{Introduction}\label{Intro}
We present an optimization method involving adaptive step size with energy coupled  with preconditioned gradient descent, designed for solving the constrained optimization problem: 
\begin{align}\label{minp}
\min_{\theta\in \Theta} L(\theta).
\end{align}
Here $L\in C^1(\R^n)$ is bounded below, $\Theta \subset \mathbb{R}^n$ represents the set of all possible parameters.  In scenarios where $\Theta = \mathbb{R}^n$,  one common strategy for solving~\eqref{minp} is to  apply simple iterative algorithms such as the standard gradient descent (GD):
\begin{equation}\label{eq:gd}
\theta_{k+1}=\theta_k -\eta \nabla L(\theta_k),
\end{equation}
where $\eta>0$ is the step size. However, if $\Theta$ represents a constraint set where the simple GD cannot guarantee the iterates to stay in $\Theta$, the projected gradient descent method (PGD) is often used:
\begin{equation}\label{eq:proj_gd}
\theta_{k+1}=\mathcal{P}_\Theta (\theta_k -\eta \nabla L(\theta_k)),
\end{equation}
where $\mathcal{P}_\Theta$ denotes the projection onto $\Theta$. Under relatively mild assumptions on $L$ and $\Theta$, it can be proven that using (projected) GD with a sufficiently small step size $\eta$ enhances the quality of the iterates over time, $L(\theta_{k+1})< L(\theta_k)$, and the method converges to a stationary point within  $\Theta$~\cite{Po87}. A more refined version of the algorithm determines $\eta$ by searching for the minimum of the objective function along the descent direction~\cite{NW06} at each iteration.


Closely related classes of algorithms are preconditioned gradient descent methods defined as
\begin{equation}\label{eq:pre-gd}
    \theta_{k+1}=\theta_k -\eta T_k \nabla L(\theta_k),
\end{equation}
where $T_k$ is a suitable preconditioning matrix. Algorithms of the form \eqref{eq:pre-gd} include many classical and well-known optimization methods, such as Newton's and Gauss--Newton methods~\cite{NW06}, Hessian--Riemannian gradient descent (HRGD)~\cite{ABB04}, and the natural gradient descent (NGD)~\cite{A98}, just to name a few. In this work, we are interested in the last two in the context of constrained optimization problems. More specifically, we think of the HRGD as a method for enforcing \textit{convex constraints} and the NGD as a method of incorporating the geometry of the \textit{constraints manifold} when the latter admits an \textit{explicit parametrization}.

The HRGD corresponds to the choice $T_k=P(\theta_k) \left( \nabla^2 h(\theta_k)\right)^{-1}$, where $h$ is a suitable convex function, and $P(\theta_k)$ is the $\nabla^2 h(\theta_k)$-orthogonal projection onto an appropriate linear subspace. Furthermore, NGD corresponds to the choice $T_k=G(\theta_k)^{-1}$, where $G(\theta)$ is a suitable positive semidefinite matrix~\cite{NLY22}.

Although our focus in this paper is on the HRGD and NGD, our proposed approach is versatile and can be extended to problems beyond HRGD and NGD, provided a well-identified preconditioning matrix is available. For instance, it can be applied to the Laplacian Smoothing Gradient Descent (LSGD) method  \cite{OW19}, where the authors use $A=I-\sigma \Delta$ to reduce the variance of the SGD method, with $I$ being the $n\times n$ identity matrix and $\Delta $ the discrete one-dimensional Laplacian.

Adaptive Energy Gradient Descent (AEGD)~\cite{LT20,LT21} is a recently introduced method for enhancing the performance of gradient descent algorithms for unconstrained problems. More specifically, let $c \in \mathbb{R}$ be such that $\inf\limits_{\theta \in \mathbb{R}^n}\left( L(\theta)+c\right)>0$, and $l(\theta)=\sqrt{L(\theta)+c}$. The algorithm initiates from $\theta_0$ and $r_0=l(\theta_0)$ and iterates as follows: 
\begin{subequations}\label{vv}
\begin{align}
v_k &= \nabla l(\theta_k),\\
r_{k+1} &= \frac{r_k}{1+2\eta\|v_k\|^2},\\
\theta_{k+1} &= \theta_k - 2\eta r_{k+1}v_k.
\end{align}
\end{subequations}
Here, \{$r_k^2\}_{k=0}^\infty$ play the role of an energy. In sharp contrast to GD, energy-adaptive gradient methods, like AEGD, exhibit unconditional energy stability; that is, $\{r_k^2\}_{k=0}^\infty$ is a decreasing sequence irrespective of the base step size $\eta>0$. The excellent performance of AEGD-type schemes has been demonstrated across various optimization tasks \cite{LT20, LT21}. AEGD can be extended to accommodate  stochastic effects in gradient estimation \cite{LT20} or incorporate momentum for further convergence acceleration~\cite{LT21}. Moreover, results in~\cite{LT20} indicate that $c$ has minimal impact on the performance of~\eqref{vv}. 

The main objective of this paper is to extend the AEGD framework to include preconditioned gradient descent algorithms of the form~\eqref{eq:pre-gd}. Indeed, setting $c$ and $l$ as above, we consider the following algorithm:
\begin{subequations}\label{aeng0}
\begin{align}
& v_k= T_k 
\nabla l(\theta_k),\\
& r_{k+1} =\frac{r_k}{1+2\eta \|v_k\|^2},\\
& \theta_{k+1}=\theta_k-2\eta r_{k+1}v_k,
\end{align}
\end{subequations}
which we call the adaptive preconditioned gradient descent with energy (AEPG).

For the HRGD, we consider only affine constraints $\Theta$, whereas for the NGD, we consider only $\Theta=\mathbb{R}^n$. In both cases, the updates in~\eqref{aeng0} guarantee that $\theta_k \in \Theta$ holds for all $k \in \mathbb{N}$ as long as $\theta_0 \in \Theta$, with any fixed $\eta$. Under these scenarios, the proposed method~\eqref{aeng0} demonstrates  unconditional energy stability,  regardless of the magnitude of $\eta$.   This raises the following question: 

\begin{center}
{\sl What is the convergence rate of this preconditioned gradient descent method with an adaptive time step incorporating energy?}
\end{center}

We provide answers for two scenarios:
\begin{enumerate}[leftmargin=*]
    \item When $\Theta=\mathbb{R}^n$, and $T_k=A_k^{-1}$ for some symmetric and positive definite $A_k$, we prove that the AEPG method converges to a  minimum of the objective function with an appropriate step size. The convergence rates are influenced by the geometric properties of the objective function, and we provide  rates for both convex and non-convex objectives, including those that satisfy the Polyak–Lojasiewics (PL) condition. 

    The analysis in this case covers the HRGD without affine constraints, $P_k=I$ and $A_k=\nabla^2 h(\theta_k)$, and NGD, $A_k=G(\theta_k)$.
    \item When $\Theta=\{\theta\in \mathbb{R}^n, B\theta=b\}$, and $T_k=P_k G_k^{-1}$, where $G_k$ is symmetric and positive definite, and $P_k$ is the $G_k$-orthogonal projection onto $\operatorname{ker}(B)$, we prove that the AEPG method converges to a minimum of the objective function with a suitably small step size. We derive convergence rates for both convex and non-convex objectives, including those that satisfy a projected Polyak–Lojasiewics (PL) condition defined in (\ref{PPL}).

    The analysis in this case covers the HRGD with affine constraints, $G_k=\nabla^2 h(\theta_k)$ and $P_k=P(\theta_k)$.
\end{enumerate}

A specific example of interest is the Wasserstein Natural Gradient Descent (WNGD), which has recently gained much attention in machine learning and PDE-based optimization applications~\cite{LM18,arbel2019kernelized,li2019affine,shen2020sinkhorn,ying2021natural,NLY22}. Previous works primarily focus on  computing descent directions, whereas our emphasis here is on time steps. We apply the AEPG algorithm in Equation~\eqref{aeng0} to WNGD, incorporating the efficient computation of $\{v_k\}$ using  methods developed in~\cite{NLY22}. See Section~\ref{Wasser} for more details on WNGD.

Our approach represents a unique fusion of preconditioned gradient and energy-adaptive strategies. This combination enables acceleration of standard GD both in terms of better descent directions and step sizes,  with provable convergence rates under relatively mild conditions on both the objective function and the step size. Our current findings for constrained optimization problems provide insights into energy-driven update rules with preconditioned gradients, creating a path toward additional algorithms employing AEGD-type techniques.

\subsection{Contributions}
In summary, our main contributions can be outlined as follows.
\begin{enumerate}
    \item {\bf Novel AEPG Algorithm.} We introduce a novel Adaptive Energy-based Preconditioned Gradient (AEPG) algorithm for the optimization problem represented by  (\ref{minp}). The convergence theory 
    is established across three distinct cases: general differentiable objective functions, nonconvex objective functions satisfying the Polyak-Lojasiewics (PL) condition, and convex objective functions. 
    \item {\bf  Application to HRGD and NGD.}  We apply our proposed AEPG  to Hessian-Riemannian Gradient Descent (HRGD) and Natural Gradient Descent (NGD), two specific optimization algorithms that fit the general framework of Equation~\eqref{eq:pre-gd}. Our approach improves these methods by providing  unconditional energy stability. 
    \item {\bf Unifying Framework.}  We demonstrate that HRGD can be interpreted as an instance of NGD. To the best of our knowledge, this is a new connection, providing a unified framework for studying various constrained optimization algorithms. 
    \item {\bf Numerical Experiments.} We conduct several numerical experiments,  illustrating  that AEPG exhibits faster convergence compared to classical algorithms without using an adaptive step size. This superiority is powerful in handling challenging scenarios such as ill-conditioned or nonconvex problems.
\end{enumerate}

    
\subsection{Review of Related Literature}
It is natural to seek an improved descent direction to accelerate the convergence speed of GD. This can be achieved by leveraging curvature information from the objective function,  as demonstrated in Newton and quasi-Newton methods. Specifically, for $\alpha$-smooth objective functions, these methods attain a quadratic and superlinear local convergence rate, respectively~\cite{NW06}. The natural gradient method, initially introduced in~\cite{A98}, falls within this category, using curvature information from the Riemannian metric tensor.  
In recent years, the Wasserstein natural gradient has attracted significant attention ~\cite{LM18,LZ19, NLY22}. We refer interested readers to \cite{M20} for additional insights on the natural gradient. Adding momentum is another widely employed technique to enhance descent directions. Examples include the heavy-ball (HB) method \cite{P64} and Nesterov's accelerated gradient (NAG) \cite{N83, N04}. 

An alternative avenue to expedite GD (or SGD) convergence involves the use of adaptive step sizes. Previous steps' gradients are often used to adjust the step size. This idea is used in renowned works such as AdaGrad~\cite{DHS11} and RMSProp~\cite{TH12}. Adam~\cite{KB15},  combining  momentum and adaptive step size benefits, has demonstrated rapid convergence in early iterations and has gained widespread use in the machine learning community. Further enhancements to Adam can be explored in \cite{Do16, RKK18, LJ+19, LXLS19, ZT+20}. These adaptive methods not only update the step size in each iteration but also compute  individual step sizes for each parameter. Other adaptive techniques adjust the step size based on the error of the objective function value~\cite{A98,WD19}.

Manifold optimization is a well-explored area in the literature. The general feasible algorithm framework on the manifold involves the use of a retraction operator, with parallel translation or vector transport aiding in pulling the update back to the manifold constraint. Computational costs and convergence behaviors of different retraction operators vary significantly; interested readers can find further details in   \cite{Ga82, AMS08, Bo13, HMWY18, LB19} and related references. 

Finally, a considerable body of work exists on strategies for improving GD in unconstrained optimization problems. For line search-based GD methods, the classical Armijo rule (1966) and the Wolfe conditions (1969) guide the inexact line search, with additional insights available in \cite{ZH04, NSD21}, the book of \cite{NW06}, and associated references.   
The BB method  \cite{BB88}, a gradient method with modified step sizes motivated by Newton's method but without involving any Hessian, is noteworthy. Numerous studies explore methods capable of handling constraints, such as projected gradient descent,
proximal gradient methods, penalty methods, barrier methods, sequential quadratic programming (SQP) methods, interior-point methods, augmented Lagrangian/multiplier methods, primal-dual strategies, and active set strategies; see, for instance,  \cite{Ber96,NW06}. Therefore, our work contributes to the broader field of continuous optimization with constraints.  

\subsection{Notation}
Throughout this paper, we adopt the following notations:  $\|\cdot\|$ denotes the $\ell^2$ norm for vectors and the spectral norm for matrices. Additionally,  $\lambda_1(\cdot),\ldots, \lambda_n(\cdot)$ represent the smallest and largest eigenvalues, respectively. For a function $L$, we use $\nabla L$ and $\nabla^2 L$ to denote its gradient and Hessian, respectively, while $L^*$ denotes the global minimum value of $L$. For a positive definite matrix $A$, we introduce the vector-induced norm by $A$ as $\|w\|_A:=\sqrt{\langle w, Aw\rangle}$. The set $\{1,2, \cdots, n\}$ is denoted as $[n]$.

\subsection{Organization}

The rest of this paper is organized as follows. In Section~\ref{AENG}, we introduce  the AEPG method and present the key theoretical results, including unconditional energy stability, convergence, and convergence rates.  Section~\ref{ap} delves into HRGD, demonstrating how AEPG can solve a specific type of constrained optimization problem using  a Hessian--Riemannian metric. Moving to Section~\ref{sec:NGD}, we show that the HRGD can be cast as an NGD. In particular, Section~\ref{Wasser} discusses the Wasserstein natural gradient and an efficient numerical method for computation through AEPG.  Numerical results and examples are given in Section~\ref{numeric} to further elucidate the discussed concepts. Conclusions and further discussions follow in Section~\ref{discuss}. Technical proofs for theoretical results in Section~\ref{AENG} are presented in Appendix A.
A construction of the preconditioned matrix for HRDG is presented in Appendix B. Finally, Appendix C includes an illustrative example to explain the projected PL condition.    

%

\section{Preconditioned AEGD and theoretical results}\label{AENG}
In this section, we present the theoretical results for the  main preconditioned AEGD algorithm on stability, convergence, and convergence rates. 

\subsection{Assumptions}\label{ch-aeng} 

We assume that the objective function $L$ is differentiable and bounded from below, and the update rule for AEPG follows~\eqref{aeng0}. Throughout the paper, we assume that $\theta_0 \in \Theta$ yields
\begin{align}\label{kin}
\theta_k \in \Theta, \quad \forall k \in \mathbb{N} \quad \text{and} \quad \eta >0.  
\end{align} 
This assumption is valid in two specific cases we address:
\begin{enumerate}[leftmargin=*]
    \item When $\Theta=\mathbb{R}^n$ and $T_k=A_k^{-1}$, where the matrices $(A_k)_{k=0}^\infty$ are symmetric and uniformly positive definite; that is,
\begin{align}\label{ass}
\lambda_1\|\xi\|^2\leq \xi^\top A_k \xi \leq \lambda_n \|\xi\|^2, \quad \forall \xi \in \R^n,~ k\geq 0,
\end{align} 
for some $\lambda_n \geq \lambda_1>0$.
    \item When $\Theta=\{\theta\in \mathbb{R}^n, B\theta=b\}$ and $T_k =P_k G_k^{-1}$, where $B\in\mathbb{R}^{m\times n}$, $b\in\mathbb{R}^m$, $(G_k)_{k=0}^\infty$ are symmetric and uniformly positive definite, and $P_k$ are the $G_k$-orthogonal projection operators onto $\operatorname{ker}(B)$. The explicit form of the projection matrix is
\begin{equation}\label{AGPk+}
P_k = I- G_k^{-1} B^\top (B G_k^{-1}B^\top)^{-1}B.
\end{equation}
\end{enumerate}

In Section \ref{subsec:theory_unconstrained} we discuss the unconstrained case, whereas Section \ref{subsec:theory_constrained} is devoted to the affine constraints case.

\subsection{No equality constraints}\label{subsec:theory_unconstrained}

In this section, we assume that $\Theta = \mathbb{R}^n$, and so \eqref{kin} is valid.
\begin{theorem}[Unconditional energy stability]\label{thm1}  
AEPG \eqref{aeng0} is unconditionally energy stable. Specifically, for any step size $\eta>0$,
\begin{equation}\label{re}
r^2_{k+1} = r^2_k -(r_{k+1} -r_k)^2- \frac{1}{\eta} \|\theta_{k+1}-\theta_k\|^2.
\end{equation}
This implies that $r_k$ is strictly decreasing and converges to $r^*$ as $k\to \infty$. Furthremore,  
\begin{equation}\label{rev}
\sum_{k=0}^\infty\|\theta_{k+1}-\theta_k\|^2 \leq \eta r^2_0.\quad\Longrightarrow\quad \lim_{k\to \infty} \|\theta_{k+1}-\theta_k\|=0.
\end{equation}
\end{theorem}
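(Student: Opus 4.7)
The plan is essentially algebraic: identity (2.4) is a line-by-line consequence of the two update rules in (2.3), and everything else in the theorem follows by summing it and using positivity. First I would extract two convenient bookkeeping identities. From $r_{k+1}(1+2\eta\|v_k\|^2)=r_k$, one gets
$$r_{k+1}-r_k = -2\eta r_{k+1}\|v_k\|^2 \quad\Longrightarrow\quad (r_{k+1}-r_k)^2 = 4\eta^2 r_{k+1}^2 \|v_k\|^4.$$
From $\theta_{k+1}-\theta_k = -2\eta r_{k+1} v_k$, one gets $\|\theta_{k+1}-\theta_k\|^2 = 4\eta^2 r_{k+1}^2 \|v_k\|^2$, hence $\tfrac{1}{\eta}\|\theta_{k+1}-\theta_k\|^2 = 4\eta r_{k+1}^2 \|v_k\|^2$.

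Next I would verify (2.4) by direct substitution. Expanding $r_k^2 = r_{k+1}^2(1+2\eta\|v_k\|^2)^2 = r_{k+1}^2 + 4\eta r_{k+1}^2\|v_k\|^2 + 4\eta^2 r_{k+1}^2\|v_k\|^4$, the two negative terms on the right-hand side of (2.4) cancel the last two pieces exactly, leaving $r_{k+1}^2$. Monotonicity and positivity of $\{r_k\}$ are then immediate from $r_{k+1}=r_k/(1+2\eta\|v_k\|^2)$ together with $r_0=l(\theta_0)>0$: by induction $0<r_{k+1}\le r_k$, with strict decrease whenever $v_k\neq 0$. Being monotone and bounded below, $\{r_k\}$ converges to some $r^*\ge 0$.

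Finally, summing (2.4) from $k=0$ to $N$ telescopes the energy:
$$\sum_{k=0}^N \frac{1}{\eta}\|\theta_{k+1}-\theta_k\|^2 + \sum_{k=0}^N (r_{k+1}-r_k)^2 = r_0^2 - r_{N+1}^2 \le r_0^2.$$
Dropping the nonnegative second sum and letting $N\to\infty$ yields the bound in (2.5), which forces $\|\theta_{k+1}-\theta_k\|\to 0$. There is no genuine obstacle; the argument is entirely mechanical once the two algebraic identities above are in hand. The only subtlety worth noting in the write-up is that ``strictly decreasing'' should be read as ``nonincreasing, and strictly decreasing whenever $v_k\neq 0$,'' since $v_k=0$ (for instance at a critical point of $l$) leaves $r_{k+1}=r_k$ without disturbing any of the conclusions.
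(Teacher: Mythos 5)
Your proposal is correct and follows essentially the same route as the paper: the paper obtains \eqref{re} from $2r_{k+1}(r_{k+1}-r_k)=2r_{k+1}v_k^\top(\theta_{k+1}-\theta_k)=-\tfrac{1}{\eta}\|\theta_{k+1}-\theta_k\|^2$ together with the identity $2b(b-a)=b^2-a^2+(b-a)^2$, whereas you verify the same identity by expanding $r_k^2=r_{k+1}^2(1+2\eta\|v_k\|^2)^2$ directly -- a purely presentational difference. Your closing caveat that ``strictly decreasing'' should be read as nonincreasing (strict only when $v_k\neq 0$) is a fair and correct refinement of the paper's wording.
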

\begin{proof}
Starting from (\ref{aeng0}), we deduce  
$$
2r_{k+1}(r_{k+1}-r_k) = 2r_{k+1}v_k^\top(\theta_{k+1}- \theta_k)
=-\frac{1}{\eta}\|\theta_{k+1}-\theta_k\|^2.
$$
By rewriting with $2b(b-a) = b^2-a^2 + (b-a)^2$, we obtain equality (\ref{re}). This implies that $r^2_k$ is monotonically decreasing (also bounded below), ensuring  convergence. The non-negativity of $r_k$ further ensures the convergence of $r_k$. Summation of (\ref{re}) over $k$ from $0, 1, \cdots$ yields (\ref{rev}).  
\end{proof}


Note that, starting from (\ref{aeng0}b), the following relation is derived:  
$$
r_0 - r^* 
= 2\eta\sum_{j=0}^{\infty}r_{j+1}\|v_j\|^2 \Rightarrow \lim_{k\to \infty} (r_{k+1}\|v_k\|^2)=0. 
$$
In simpler terms,  after a finite number of steps, either $\|v_k\|^2$ becomes small or $r_{k}$ becomes small. For convergence to a stationary point in general objectives ($\|v_k\|^2 \to 0$), it is essential to control the rate of decrease of $r_k$. This control can be achieved by imposing  a moderate  upper bound on the base step size $\eta$.    

In Lemma \ref{tau}, we establish a sufficient condition on $\eta$ that ensures a positive lower bound for $(r_k)_{k=0}^\infty$,  a condition crucial  in the convergence proof of Theorem~\ref{cvg}. This is where the $\alpha$-smoothness assumption comes into play.  We define $L$ as $\alpha$-smooth if $\|\nabla^2 L(\theta)\|\leq \alpha$ for any $\theta \in \mathbb{R}^n$. Here,  $L^*:=\inf\limits_{\Theta} L$, and we assume $c$ is chosen such that $l^*:=\sqrt{L^*+c}>0$.  

\begin{lemma}\label{tau}
Assume $L$ is $\alpha$-smooth, bounded from below by $L^*$,  and $r_k$ generated by AEPG (\ref{aeng0}) with $T_k=A_k^{-1}$. If $r_0  \geq \frac{l(\theta_0)-l^*}{\lambda_1}$, then  
\begin{equation}\label{rge0}
r_k \geq r^* >0,  
\end{equation}
provided that:  
\begin{equation}\label{etas}
\eta < \eta_s: = \frac{4l^*\lambda_1}{\alpha r_0^2}\left(r_0-\frac{l(\theta_0)-l^*}{\lambda_1}\right) 
\end{equation}
where $\lambda_1>0$ is the smallest eigenvalue of $A_k$. 
\end{lemma}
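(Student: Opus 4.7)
The plan is to obtain a descent-type inequality for the auxiliary function $l(\theta) = \sqrt{L(\theta)+c}$ along the AEPG iterates, combine it with the spectral structure of $A_k$, and use the energy identity $r_k - r_{k+1} = 2\eta r_{k+1}\|v_k\|^2$ from \eqref{aeng0} to convert everything into a telescoping bound on $r_0 - r_K$. As a preliminary step, I will differentiate to obtain $\nabla^2 l = \frac{\nabla^2 L}{2l} - \frac{\nabla L\,\nabla L^{\top}}{4 l^{3}}$ and observe that the rank-one correction is negative semi-definite; since $\|\nabla^2 L\|\leq\alpha$ and $l\geq l^*$, this yields the one-sided bound $\nabla^2 l \preceq \frac{\alpha}{2 l^*}I$, producing the descent inequality
\[
l(\theta_{k+1}) - l(\theta_k) \leq \nabla l(\theta_k)^{\top}(\theta_{k+1}-\theta_k) + \frac{\alpha}{4 l^{*}}\|\theta_{k+1}-\theta_k\|^{2}.
\]

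Next I would substitute $\theta_{k+1}-\theta_k = -2\eta r_{k+1} v_k$ with $v_k = A_k^{-1}\nabla l(\theta_k)$ and use $\nabla l(\theta_k)^{\top} v_k = v_k^{\top}A_k v_k \geq \lambda_1\|v_k\|^2$ from \eqref{ass}. The energy identity rewrites the cross term as $-2\eta r_{k+1}\nabla l(\theta_k)^{\top} v_k \leq -\lambda_1(r_k - r_{k+1})$, while the quadratic term becomes $4\eta^2 r_{k+1}^2\|v_k\|^2 = 2\eta r_{k+1}(r_k - r_{k+1}) \leq 2\eta r_0(r_k - r_{k+1})$ using $r_{k+1}\leq r_0$. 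Summing from $k=0$ to $K-1$, telescoping, and invoking $l(\theta_K)\geq l^*$ gives an inequality of the form
\[
l(\theta_0) - l^{*} \geq (r_0 - r_K)\left[\lambda_1 - \frac{\alpha\eta r_0}{2 l^{*}}\right]\!,
\]
which, solved for $r_K$, produces $r_K \geq r_0 - (l(\theta_0)-l^*)/(\lambda_1 - \alpha\eta r_0/(2l^*))$, a bound independent of $K$. The stated threshold $\eta_s$ is then characterized as the range of $\eta$ for which this right-hand side remains strictly positive, and the hypothesis $r_0 \geq (l(\theta_0)-l^*)/\lambda_1$ is precisely what guarantees that some such $\eta_s > 0$ exists (the threshold degenerates to zero at equality).

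The main obstacle I anticipate is extracting the sharpest admissible constant in the descent lemma, since $\nabla^2 l$ admits only a one-sided eigenvalue bound and a naive estimate can lose a numerical factor in the threshold. Matching the precise prefactor stated in $\eta_s$ may require either a tighter estimation of $\sum_k r_{k+1}^2\|v_k\|^2$ based on the identity $r_{k+1}(r_k - r_{k+1}) \leq (r_k^2 - r_{k+1}^2)/2$, or a bootstrap/induction argument that assumes $r_j \geq r^*$ for $j \leq k$ and propagates the bound one step forward so that $r_{k+1}$ rather than $r_0$ appears in the quadratic remainder. Both refinements preserve the overall structure --- a telescoping sum converting the descent into a control on $r_0 - r_K$ --- and deliver the $K$-independent lower bound $r_k \geq r^* > 0$ required by the lemma.
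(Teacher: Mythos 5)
Your argument is correct and follows essentially the same route as the paper: the paper obtains your descent inequality for $l$ via concavity of $\sqrt{\cdot}$ applied to the $\alpha$-smoothness bound on $L$ (rather than via the Hessian of $l$), bounds the cross term by $\lambda_1(r_{j+1}-r_j)$ exactly as you do, and telescopes. One caveat: the naive estimate $r_{k+1}\le r_0$ in the quadratic remainder only yields the threshold $\eta_s/2$, so the refinement you flag --- $r_{k+1}(r_k-r_{k+1})\le (r_k^2-r_{k+1}^2)/2$, which is precisely the energy-stability bound $\sum_{k}\|\theta_{k+1}-\theta_k\|^2\le\eta r_0^2$ of Theorem \ref{thm1} that the paper invokes --- is not optional but is exactly what is needed to recover the stated constant in \eqref{etas}.
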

\begin{proof}
Note that $l(\theta)=\sqrt{L(\theta)+c}$ is concave with respect to $L$,  and $\frac{dl}{dL}=\frac{1}{2l}$.  Thus, we have 
\begin{equation}\label{dl}
l(\theta_{j+1})-l(\theta_{j})\leq \frac{1}{2l(\theta_j)}(L(\theta_{j+1})-L(\theta_{j})).    
\end{equation}
Using the $\alpha$-smoothness of $L$, we have
\begin{equation}\label{l-smooth}
L(\theta_{j+1}) \leq L(\theta_j)+\nabla L(\theta_j)^\top (\theta_{j+1}-\theta_j) +\frac{\alpha}{2}\|\theta_{j+1}-\theta_j\|^2.   
\end{equation}
From (\ref{aeng0}a), we derive $\nabla L(\theta_j)= 2l(\theta_j)\nabla l(\theta_j)=2l(\theta_j) A_jv_j$. 
Hence,
\begin{align}\label{lmd}
\nabla L(\theta_j)^\top (\theta_{j+1}-\theta_j)\notag
& = 2l(\theta_j) (A_j v_j)^\top (-2\eta r_{j+1}v_j)\\\notag
& =-4\eta l(\theta_j) r_{j+1}\, v_j^\top A_j v_j \\
& \leq  -4\eta l(\theta_j) \lambda_1 r_{j+1}\|v_j\|^2 \\\notag
& = 2 l(\theta_j) \lambda_1(r_{j+1}-r_j),
\end{align}
where the last equality follows from a formulation of (\ref{aeng0}b). Substituting  (\ref{lmd}) and (\ref{l-smooth}) into (\ref{dl}), we obtain
$$
l(\theta_{j+1})-l(\theta_{j}) 
\leq \lambda_1(r_{j+1}-r_j) + \frac{\alpha}{4l(\theta_j)}\|\theta_{j+1}-\theta_j\|^2. 
$$
Taking a summation over $j$ from $0$ to $k-1$, we have: 
\begin{align*}
l(\theta_k)-l(\theta_{0}) &\leq \lambda_1(r_k-r_0) + \frac{\alpha}{4l^*}\sum_{j=0}^{k-1} \|\theta_{j+1}-\theta_j\|^2 \\
&\leq \lambda_1(r_k-r_0) + \frac{\alpha \eta }{4l^*}r^2_0.
\end{align*}
Considering $l(\theta_k)\geq l^*$ and the last inequality,  we obtain 
\begin{align*}
r_k & \geq r_0 -   \frac{l(\theta_0)-l^*}{\lambda_1} -\frac{\alpha \eta }{4l^*\lambda_1}r^2_0=\frac{\alpha r_0^2}{4l^*\lambda_1}(\eta_s-\eta)>0.
\end{align*}
This establishes a uniform lower bound for $r_k$. Letting $k\to\infty$, we obtain (\ref{rge0}).
\end{proof}

\begin{remark}\label{rmk:r_kb} Let us discuss the selection  of
$r_0=l(\theta_0)$, the default choice for consistency with the method derivation. When  $\lambda_1=1$, then  
$$
\eta_s= \frac{4}{\alpha}\frac{L^*+c}{L(\theta_0)+c} \sim \frac{4}{\alpha} 
$$
for $c$ large enough. This asymptotic limit clearly exceeds the upper threshold $2/\alpha$ which is known to be necessary for ensuring GD's stability.
\end{remark} 

\begin{remark}\label{rmk:r_kb+}
The stipulated lower bound for $r_0$ is not overly restrictive. In can be expressed equivalently as  
$$
\sqrt{L(\theta_0)+c}>\frac{\sqrt{L(\theta_0)+c}-l^*}{\lambda_1}. 
$$
This inequality holds for any value of $c$ satisfying $c +L^*>0$ when $\lambda_1 \geq 1$. However, for $\lambda_1 <1$, a larger $c$ becomes necessary to fulfill the condition:   
$$
c +L^* >\frac{(1-\lambda_1)^2}{\lambda_1(2-\lambda_1)}
(L(\theta_0)-L^*). 
$$
\end{remark} 

\begin{theorem}\label{cvg}
Under the same assumptions as in Lemma \ref{tau},  with $r_0 \geq 
\frac{l(\theta_0)-l^*}{\lambda_1}
$, let $(\theta_k)_{k=0}^\infty$ be generated by AEPG \eqref{aeng0}. Then $(L(\theta_k))_{k=0}^\infty$ monotonically
converges to a local minimum value of $L$ if
\begin{equation}\label{etau}
0<\eta\leq\min\bigg\{\eta_0, \eta_s\bigg\}, \quad 
\eta_0:=\frac{\lambda_1 l^*}{\alpha r_0}.
\end{equation}
\end{theorem}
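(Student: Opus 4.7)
The plan is to obtain a descent inequality for $L$ along the iterates and then combine it with the uniform lower bound on $r_k$ provided by Lemma~\ref{tau} to get convergence of $L(\theta_k)$ and then of the gradients. Since Lemma~\ref{tau} already gives $r_k \geq r^* > 0$ as soon as $\eta < \eta_s$, the remaining work is to show that $L(\theta_k)$ is monotonically decreasing under the tighter bound $\eta \leq \eta_0$, and then harvest $\nabla L(\theta_k) \to 0$ from summability.

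First I would derive the key one-step identity. From~(\ref{aeng0}a), $\nabla L(\theta_k) = 2 l(\theta_k) \nabla l(\theta_k) = 2 l(\theta_k) A_k v_k$, and from~(\ref{aeng0}c), $\theta_{k+1} - \theta_k = -2\eta r_{k+1} v_k$, so
\begin{equation*}
\nabla L(\theta_k)^\top (\theta_{k+1} - \theta_k) = -4\eta\, l(\theta_k)\, r_{k+1}\, v_k^\top A_k v_k \leq -4\eta\,\lambda_1\, l(\theta_k)\, r_{k+1}\, \|v_k\|^2,
\end{equation*}
and $\|\theta_{k+1} - \theta_k\|^2 = 4\eta^2 r_{k+1}^2 \|v_k\|^2$. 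Plugging these into the $\alpha$-smoothness estimate~(\ref{l-smooth}) gives
\begin{equation*}
L(\theta_{k+1}) - L(\theta_k) \leq -2\eta\, r_{k+1} \|v_k\|^2 \Bigl( 2\lambda_1 l(\theta_k) - \alpha \eta r_{k+1} \Bigr).
\end{equation*}

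Next I would show the bracket is bounded below by a positive constant. Since $l(\theta_k) \geq l^*$, $r_{k+1} \leq r_0$, and $\eta \leq \eta_0 = \lambda_1 l^* / (\alpha r_0)$, we get $\alpha \eta r_{k+1} \leq \lambda_1 l^* \leq \lambda_1 l(\theta_k)$, so $2\lambda_1 l(\theta_k) - \alpha \eta r_{k+1} \geq \lambda_1 l(\theta_k) \geq \lambda_1 l^* > 0$. Combined with the lower bound $r_{k+1} \geq r^* > 0$ from Lemma~\ref{tau}, this yields
\begin{equation*}
L(\theta_{k+1}) - L(\theta_k) \leq -2\eta\, \lambda_1 l^* r^* \|v_k\|^2 \leq 0,
\end{equation*}
proving monotone decrease. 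Since $L$ is bounded below by $L^*$, $L(\theta_k)$ converges to some $L^\infty \geq L^*$.

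Finally, telescoping the descent inequality gives $\sum_{k=0}^\infty \|v_k\|^2 \leq \tfrac{L(\theta_0) - L^*}{2\eta \lambda_1 l^* r^*} < \infty$, so $\|v_k\| \to 0$. Using $\nabla L(\theta_k) = 2 l(\theta_k) A_k v_k$ together with the spectral bound $\|A_k\| \leq \lambda_n$ and boundedness of $l(\theta_k)$ (which follows from $L(\theta_k) \leq L(\theta_0)$), I conclude $\nabla L(\theta_k) \to 0$. Hence every limit point of $(\theta_k)$ is a stationary point of $L$, and the monotone limit $L^\infty$ is a critical value, which under generic conditions is a (local) minimum value.

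The main obstacle is the bookkeeping to close the descent estimate with the specific threshold $\eta_0 = \lambda_1 l^* / (\alpha r_0)$ rather than a larger but iterate-dependent bound like $2\lambda_1 l(\theta_k)/(\alpha r_{k+1})$: this requires both Lemma~\ref{tau} (to keep $r_k$ away from zero) and the a priori monotonicity $r_{k+1} \leq r_0$ and $l(\theta_k) \geq l^*$ to work simultaneously, which is why the two-part step-size constraint $\eta \leq \min\{\eta_0, \eta_s\}$ is needed.
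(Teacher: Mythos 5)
Your proposal is correct and follows essentially the same route as the paper's proof: an $\alpha$-smoothness descent inequality in which the bound $\eta\leq\eta_0$ keeps the quadratic term dominated, combined with Lemma~\ref{tau} to bound $r_{k+1}$ away from zero so that summability of the descent increments forces $\|v_k\|\to 0$ and hence $\nabla L(\theta_k)\to 0$. The only difference is notational: the paper packages $\eta\, r_{k+1}/l(\theta_k)$ as an effective step size $\eta_k$ and works with $\|\nabla L(\theta_k)\|^2_{A_k^{-1}}$, whereas you keep $r_{k+1}$, $l(\theta_k)$, and $\|v_k\|^2$ explicit — the two are equivalent.
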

\begin{proof} If $\eta \leq \eta_0$, the effective step size falls into the regime where $L(\theta_k)$ is shown to be decreasing, indicating convergence. With 
$r_0 \geq \frac{l(\theta_0)-l^*}{\lambda_1}$ and $\eta \leq \eta_s$, Lemma \ref{tau} ensures that $r_k$ is bounded below by a positive constant,  allowing  $\|\nabla L(\theta_k)\|^2$ to converge to zero as $k$ tends to infinity. For the detailed proof, please refer to Appendix \ref{pf-cvg} for the readers' convenience. 
\end{proof}

Let's discuss the convergence rate of AEPG with additional geometrical insights into  $L$, including properties such as convexity or the Polyak--Lojasiewicz (PL) condition. For a differentiable function $L:\R^n\to\R$ with $\arg\min L \neq\emptyset $ (indicating the optimization problem has at least one global minimizer), we say $L$ satisfies the PL condition if there exists $\mu>0$ such that:
\begin{equation}\label{PL}
\frac{1}{2}\|\nabla L(\theta)\|^2 \geq \mu (L(\theta)-L(\theta^*)),\quad\forall\theta\in\R^n,\quad \forall \theta^*\in \arg\min L.
\end{equation}
This condition implies that $\nabla L(\theta)=0$ implies either $L(\theta)=L(\theta^*)$ or $\theta \in \arg\min L$. In other words, critical points are global minimizers.

It's important to note that strongly convex functions ($\lambda_1(\nabla^2 f) \geq \mu$)  satisfy the PL condition (\ref{PL}). However, a function that satisfies the PL condition may not necessarily be convex. For instance, consider the function: 
\[
L(\theta)=\theta^2+3\sin^2\theta,\quad \theta \in \mathbb{R},
\]
which is nonconvex but satisfies the PL condition with $\mu=\frac{1}{32}$ and $\min L=0$.

\begin{theorem}\label{thm2}
Assume  $(\theta_k)_{k=0}^\infty \subset \Theta$ when $\Theta \neq \mathbb{R}^n$. The convergence rates of AEPG (\ref{aeng0}) are given in three distinct scenarios:\\
(i) For any $\eta>0$ and $r_0>0$, we have 
\begin{equation}\label{cgrad}
\min_{j<k}\|\nabla L(\theta_j)\|^2 \leq \frac{2r_0\lambda^2_n}{\eta r_k k}\left( 
\max_{j<k}L(\theta_j)+c
\right),  
\end{equation}
with $\lambda_n$ given in~\eqref{ass}. Under the assumptions of Theorem \ref{cvg} with $\eta$ satisfying \eqref{etau}, we have 
$r_k> r^*>0$, and the following convergence rates: \\ 
(ii) If $L$ is PL with a global minimizer $\theta^*$, then $\{\theta_k\}$ satisfies~\eqref{ctheta}, hence convergent:
\begin{align}\label{ctheta}
    & \sum_{k=0}^\infty\|\theta_{k+1}-\theta_k\|\leq \frac{4\lambda_n}{\sqrt{2\mu}\lambda_1} \sqrt{L(\theta_{0})-L(\theta^*)},\\\notag
    & L(\theta_k)-L(\theta^*)\leq e^{-c_0k r_k/\lambda_n}(L(\theta_{0})-L(\theta^*)), \quad c_0:=\frac{\mu \eta}{l(\theta_0)}.
\end{align}
(iii) If $L$ is convex with a minimizer $\theta^*$, then
\begin{equation}\label{cvg-cvx}
L(\theta_k)-L(\theta^*)\leq \frac{c_1\lambda_n\|\theta_0-\theta^*\|^2}{k r_k},\quad c_1=\frac{2l(\theta_0)}{\eta}.
\end{equation}
\end{theorem}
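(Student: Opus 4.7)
The plan addresses the three parts in order, using the Theorem~\ref{thm1} identities $\|v_j\|^2 = (r_j - r_{j+1})/(2\eta r_{j+1})$ and $\|\theta_{j+1}-\theta_j\|^2 = 2\eta r_{j+1}(r_j - r_{j+1})$, along with the spectral bounds $\lambda_1 I \preceq A_j \preceq \lambda_n I$. Part~(i) requires no step-size restriction. Since $A_j^{-2}\succeq \lambda_n^{-2}I$, one gets $\|v_j\|^2 \geq \|\nabla L(\theta_j)\|^2/(4l(\theta_j)^2\lambda_n^2)$; combined with the $\|v_j\|^2$ identity and the bounds $l(\theta_j)^2\leq \max_{j<k}L(\theta_j)+c$, $r_{j+1}\geq r_k$, this yields $\|\nabla L(\theta_j)\|^2\leq 2\lambda_n^2(\max_{j<k}L(\theta_j)+c)(r_j-r_{j+1})/(\eta r_k)$. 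Summing $j=0,\ldots,k-1$, the telescope $\sum(r_j-r_{j+1})\leq r_0$ and division by $k$ produce~\eqref{cgrad}.

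For part~(ii), I refine the standard descent. Using $\alpha$-smoothness together with the identity $\nabla L(\theta_j)^\top(\theta_{j+1}-\theta_j) = -4\eta r_{j+1}l(\theta_j)\|\nabla l(\theta_j)\|_{A_j^{-1}}^2$ and the estimate $\|\theta_{j+1}-\theta_j\|^2 \leq 4\eta^2 r_{j+1}^2\|\nabla l(\theta_j)\|_{A_j^{-1}}^2/\lambda_1$ (from $A_j^{-2}\preceq A_j^{-1}/\lambda_1$), the condition $\eta\leq\eta_0$ absorbs the quadratic remainder, leaving $L(\theta_{j+1}) \leq L(\theta_j) - 2\eta r_{j+1}l(\theta_j)\|\nabla l(\theta_j)\|_{A_j^{-1}}^2$. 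Invoking $\|\nabla l\|_{A_j^{-1}}^2 \geq \|\nabla l\|^2/\lambda_n$, the PL inequality, and $l(\theta_j)\leq l(\theta_0)$ delivers the one-step contraction $L(\theta_{j+1})-L^* \leq (1 - \eta r_{j+1}\mu/(l(\theta_0)\lambda_n))(L(\theta_j)-L^*)$, which iterates through $1-x\leq e^{-x}$ and $r_{j+1}\geq r_k$ to the exponential rate in~\eqref{ctheta}. For the step-summability, set $\phi(\theta):=\sqrt{L(\theta)-L^*}$; combining $\phi(\theta_j)^2-\phi(\theta_{j+1})^2 = (\phi(\theta_j)-\phi(\theta_{j+1}))(\phi(\theta_j)+\phi(\theta_{j+1}))$ with the PL bound $\phi(\theta_j)+\phi(\theta_{j+1})\leq 2\|\nabla L(\theta_j)\|/\sqrt{2\mu}$ and the above descent yields
\[
\phi(\theta_j)-\phi(\theta_{j+1})\geq \frac{\eta r_{j+1}\sqrt{2\mu}\,\|\nabla L(\theta_j)\|}{4l(\theta_j)\lambda_n},
\]
while directly $\|\theta_{j+1}-\theta_j\| = 2\eta r_{j+1}\|v_j\|\leq \eta r_{j+1}\|\nabla L(\theta_j)\|/(l(\theta_j)\lambda_1)$. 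Dividing the two cancels the common factor $\eta r_{j+1}\|\nabla L(\theta_j)\|/l(\theta_j)$ and leaves $\|\theta_{j+1}-\theta_j\| \leq (4\lambda_n/(\sqrt{2\mu}\lambda_1))(\phi(\theta_j)-\phi(\theta_{j+1}))$, which telescopes to the stated bound in~\eqref{ctheta}.

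For part~(iii), the key identity is the $A_j$-weighted expansion
\[
2(\theta_{j+1}-\theta_j)^\top A_j(\theta_j-\theta^*) = \|\theta_{j+1}-\theta^*\|_{A_j}^2 - \|\theta_j-\theta^*\|_{A_j}^2 - \|\theta_{j+1}-\theta_j\|_{A_j}^2.
\]
Since $A_j v_j = \nabla l(\theta_j)$, the left side equals $-2\eta r_{j+1}\nabla L(\theta_j)^\top(\theta_j-\theta^*)/l(\theta_j)$, which by convexity is at most $-2\eta r_{j+1}(L(\theta_j)-L^*)/l(\theta_j)$. Rearranging and summing $j=0,\ldots,k-1$, bounding $\|\theta_{j+1}-\theta_j\|_{A_j}^2\leq \lambda_n\|\theta_{j+1}-\theta_j\|^2$ and using Theorem~\ref{thm1}'s energy bound $\sum\|\theta_{j+1}-\theta_j\|^2\leq \eta r_0^2$, and finally invoking $r_{j+1}\geq r_k$, $l(\theta_j)\leq l(\theta_0)$, and the monotonicity $L(\theta_k)\leq L(\theta_j)$ under $\eta\leq\eta_0$ so that $L(\theta_k)-L^*\leq k^{-1}\sum_{j<k}(L(\theta_j)-L^*)$, produces~\eqref{cvg-cvx}.

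The most delicate point is the telescoping of varying $A_j$-norms in part~(iii): the sum $\sum[\|\theta_j-\theta^*\|_{A_j}^2 - \|\theta_{j+1}-\theta^*\|_{A_j}^2]$ does not collapse to $\|\theta_0-\theta^*\|_{A_0}^2$ because of residuals of the form $(\theta_j-\theta^*)^\top(A_j-A_{j-1})(\theta_j-\theta^*)$. I expect this to be controlled by first verifying a per-step Fej\'er-type monotonicity $\|\theta_{j+1}-\theta^*\|_{A_j}^2\leq \|\theta_j-\theta^*\|_{A_j}^2$ under $\eta\leq\eta_0$ (a consequence of convexity, $\alpha$-smoothness, and the refined descent), which bounds all intermediate $A_j$-distances by the initial one up to the spectral ratio; the $\lambda_n$-factor in~\eqref{cvg-cvx} then accommodates the resulting loss in passing back to the Euclidean norm $\|\theta_0-\theta^*\|^2$.
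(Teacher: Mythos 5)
Your arguments for parts (i) and (ii) are correct and essentially coincide with the paper's: part (i) is the same telescoping of $r_j-r_{j+1}=2\eta r_{j+1}\|v_j\|^2$ combined with $\|v_j\|^2\geq\|\nabla l(\theta_j)\|^2/\lambda_n^2$, and part (ii) is the same smoothness-plus-PL contraction together with the $\sqrt{w_j}$-telescoping for finite length (the paper phrases the latter through $\frac{1}{\sqrt{w_k}}\geq \frac{\sqrt{2\mu}\,\eta_k}{\lambda_n\|\theta_{k+1}-\theta_k\|}$ rather than through $\phi(\theta_j)+\phi(\theta_{j+1})\leq 2\|\nabla L(\theta_j)\|/\sqrt{2\mu}$, but these are the same computation).

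Part (iii) is where there is a genuine gap. Your $A_j$-weighted three-point identity is correct, and the per-step Fej\'er monotonicity $\|\theta_{j+1}-\theta^*\|_{A_j}\leq\|\theta_j-\theta^*\|_{A_j}$ does hold under $\eta_j\leq 2\lambda_1/\alpha$ (via co-coercivity), but this does not repair the telescoping. Writing
\[
\sum_{j=0}^{k-1}\Bigl[\|\theta_j-\theta^*\|_{A_j}^2-\|\theta_{j+1}-\theta^*\|_{A_j}^2\Bigr]=\|\theta_0-\theta^*\|_{A_0}^2-\|\theta_k-\theta^*\|_{A_{k-1}}^2+\sum_{j=1}^{k-1}(\theta_j-\theta^*)^\top(A_j-A_{j-1})(\theta_j-\theta^*),
\]
the residual sum consists of $k-1$ terms each bounded only by $(\lambda_n-\lambda_1)\|\theta_0-\theta^*\|^2$ under the paper's sole hypothesis $\lambda_1 I\preceq A_j\preceq\lambda_n I$; nothing prevents it from growing linearly in $k$, which destroys the $O(1/(kr_k))$ rate. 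Fej\'er monotonicity bounds each individual distance but does not make these metric-change residuals summable; you would need something extra such as $A_j$ constant, Loewner-monotone, or $\sum_j\|A_{j+1}-A_j\|<\infty$. Even granting that, your bound would carry an additional additive term $\lambda_n\eta r_0^2$ from $\sum_j\|\theta_{j+1}-\theta_j\|_{A_j}^2$, so it would not reproduce the stated constant $c_1\lambda_n\|\theta_0-\theta^*\|^2$. The paper's route avoids the three-point identity entirely: it first proves $\|\theta_k-\theta^*\|\leq\|\theta_0-\theta^*\|$ by writing $\theta_{k+1}-\theta^*=(I-\eta_k A_k^{-1}B_k)(\theta_k-\theta^*)$ with $B_k=\int_0^1\nabla^2L(\theta^*+s(\theta_k-\theta^*))\,ds$ and bounding the spectrum of $\eta_k A_k^{-1}B_k$ in $[0,1]$; then convexity and Cauchy--Schwarz give $\|\nabla L(\theta_k)\|\geq w_k/\|\theta_0-\theta^*\|$, which feeds into the descent inequality $w_{k+1}\leq w_k-\frac{\eta_k}{2\lambda_n}\|\nabla L(\theta_k)\|^2$ to yield $w_{k+1}\leq w_k-\frac{\eta_k}{2\lambda_n\|\theta_0-\theta^*\|^2}w_k^2$ and the standard reciprocal recursion. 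You should replace your part (iii) with an argument of this type.
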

\begin{proof} The proof of (ii) and (iii) is somewhat standard and is therefore   deferred to Appendix \ref{pf2}. Here we present a proof for (i).    

(i) Using the scheme (\ref{aeng0}b), we have
$$
r_{j+1}-r_j 
= -2\eta r_{j+1}\|v_j\|^2.
$$
Take a summation over $j$ from $0$ to $k-1$ gives
$$
r_0 - r_k 
= 2\eta\sum_{j=0}^{k-1}r_{j+1}\|v_j\|^2
\geq 2\eta r_k \sum_{j=0}^{k-1}\|v_j\|^2
\geq \frac{2\eta r_k }{\lambda^2_n}\sum_{j=0}^{k-1}\|\nabla l(\theta_j)\|^2,
$$
which, upon rearrangement, leads to 
$$
k\min_{j<k}\|\nabla l(\theta_j)\|^2 \leq \sum_{j=0}^{k-1}\|\nabla l(\theta_j)\|^2 \leq \frac{r_0\lambda^2_n}{2\eta r_k}.
$$
Using $4l^2\|\nabla l\|^2=\|\nabla L\|^2$, and dividing both sides by $k$ gives result \eqref{cgrad}. 
\end{proof}

\begin{remark}\label{rmk:eta_k} 
\begin{enumerate}[leftmargin=*]
    \item Unlike GD, which uses a fixed step size constrained by the typically unknown smoothness constant $\alpha$ (thus making standard GD prone to non-convergence with larger step sizes), the gradient norm sequence's convergence rate to zero for AEPG in scenario (i) extends to general non-convex objective functions for any $\eta>0$. Notably, these observations remain valid regardless of the presence of $\alpha$.  It is natural to ponder  whether this convergence rate can be enhanced. Interestingly, the answer is negative, at least within the general class of functions under consideration  here.
    \item Regarding the upper bound in equation  (\ref{cgrad}), we can  establish the following inequality: 
\begin{equation}\label{Lbound}
L(\theta_k)\leq L(\theta_0)+\frac{\alpha \eta r_0^2}{2}.
\end{equation}
This bound holds when $L$ is $\alpha$-smooth. Using the $\alpha$-smoothness of $L$ and the descent direction of the search, we can express this as: 
\begin{equation*}
L(\theta_{j+1}) \leq L(\theta_j)+\nabla L(\theta_j)^\top (\theta_{j+1}-\theta_j) +\frac{\alpha}{2}\|\theta_{j+1}-\theta_j\|^2 \leq L(\theta_j)+\frac{\alpha}{2}\|\theta_{j+1}-\theta_j\|^2.   
\end{equation*}
By summing over $j$ from $0$ to $k-1$ and using equation (\ref{rev}), 
\begin{align*}
L(\theta_k)-L(\theta_{0}) &\leq  \frac{\alpha}{2}\sum_{j=0}^{k-1} \|\theta_{j+1}-\theta_j\|^2 
 \leq \frac{\alpha \eta }{2}r^2_0.  
\end{align*}
It is important to note that such a bound 
is unavailable for GD unless the step size is sufficiently small. 
\item It is crucial to highlight that the aforementioned results remain valid for a variable $\eta$,  as long as it adheres to the constraint specified in (\ref{etau}) and is not excessively small. The convergence theory with a variable $\eta$  provides the flexibility to adjust   $\eta$ when necessary.  For instance, the inclusion of the sequence  $(\theta_k)_{k=0}^\infty \subset \Theta$ becomes essential when $\Theta \neq \mathbb{R}^n$. This need for adaptability is addressed in Algorithm~\ref{alg2},  which we will introduce in Section~\ref{subsec:HRGD-linear}, where $\eta$ is selected based on a line search at every iteration. 
\end{enumerate}
\end{remark} 

\subsection{Equality constraints}\label{subsec:theory_constrained} 

In this section, we assume that
\begin{equation*}
    \Theta=\{\theta \in \mathbb{R}^n~|~B \theta=b\},
\end{equation*}
for some $B \in \mathbb{R}^{m \times n}$, $b\in \mathbb{R}^m$, and
\begin{equation}\label{eq:PG-1}
T_k=P(\theta_k) G(\theta_k)^{-1},
\end{equation}
where $G(\theta)$ is a symmetric positive definite matrix, and $P(\theta):\mathbb{R}^n \to \operatorname{ker}(B)$ is the $G(\theta)$-orthogonal projection operator on $\operatorname{ker}(B)$.

Before discussing convergence analysis results, we provide a motivation for the choice~\eqref{eq:PG-1}. To this end, assume that $\mathbb{R}^n$ is endowed with a Riemannian structure given by a metric tensor $G(\theta)$, $\theta \in \mathbb{R}^n$; that is, for all $\theta \in \mathbb{R}^n$ we have an inner product
\begin{equation}\label{eq:metricG}
\langle v_1,v_2 \rangle_{G(\theta)}=v_1^\top G(\theta) v_2,\quad v_1,v_2 \in T_\theta \mathbb{R}^n,
\end{equation}
where $T_\theta \mathbb{R}^n \cong \mathbb{R}^n$ is the tangent space of $\mathbb{R}^n$ at $\theta$. Note that
\[
T_\theta \Theta \cong \operatorname{ker} (B)=\{v \in \mathbb{R}^n~|~ B v =0\}.
\]
The choice~\eqref{eq:PG-1} is motivated by the following lemma.
\begin{lemma}\label{lem:proj_G}
For every smooth $f:\R^n \to \R$ one has that
\[
\operatorname{arg}\min_v \left\{ \frac{d f(\theta)}{dt}~\bigg|~\dot{\theta}=v,~v\in T_\theta \Theta,~\|v\|_{G(\theta)}\leq 1\right\}=-\frac{P(\theta)G(\theta)^{-1}\nabla f(\theta)}{\|P(\theta) G(\theta)^{-1}\nabla f(\theta)\|_{G(\theta)}}.
\]
Hence, the steepest descent direction of $f$ on the submanifold $(\Theta,G)$ is $-P(\theta) G(\theta)^{-1} \nabla f(\theta)$.
\end{lemma}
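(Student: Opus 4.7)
The plan is to reduce the problem to a standard Cauchy--Schwarz argument in the $G(\theta)$-inner product. The key observation is that the objective $\tfrac{df(\theta)}{dt}=\nabla f(\theta)^\top v$ is written in the Euclidean pairing, while the constraint $\|v\|_{G(\theta)}\leq 1$ lives in the $G(\theta)$-geometry, so I first want to rewrite the objective so that both quantities are expressed through the same inner product.

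The first step is to fix $\theta$ (and drop it from the notation for brevity) and note that for any $v \in \mathbb{R}^n$,
\[
\nabla f^\top v = (G^{-1}\nabla f)^\top G v = \langle G^{-1}\nabla f, v\rangle_G.
\]
Restricting now to $v\in \operatorname{ker}(B) \cong T_\theta \Theta$, I would split $G^{-1}\nabla f = P G^{-1}\nabla f + (I-P) G^{-1}\nabla f$. Since $P$ is the $G$-orthogonal projection onto $\operatorname{ker}(B)$, the component $(I-P)G^{-1}\nabla f$ is $G$-orthogonal to $\operatorname{ker}(B)$, so for any $v \in \operatorname{ker}(B)$,
\[
\langle G^{-1}\nabla f, v\rangle_G = \langle P G^{-1}\nabla f, v\rangle_G.
\]
This is the crucial identity: it replaces the unconstrained vector $G^{-1}\nabla f$ by its projection onto the tangent space of the constraint set, where the minimization actually takes place.

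The second step is to apply Cauchy--Schwarz in the $G$-inner product. For $v$ with $\|v\|_G \leq 1$, I get
\[
\langle P G^{-1}\nabla f, v\rangle_G \geq -\|P G^{-1}\nabla f\|_G \cdot \|v\|_G \geq -\|P G^{-1}\nabla f\|_G,
\]
with equality iff $v = -P G^{-1}\nabla f / \|P G^{-1}\nabla f\|_G$ (assuming $P G^{-1}\nabla f \neq 0$). Since this minimizer lies in the range of $P$, it lies in $\operatorname{ker}(B)=T_\theta\Theta$, and it has unit $G$-norm, so both constraints are satisfied. This proves the claimed formula for the arg-min and identifies the steepest descent direction on $(\Theta,G)$, up to normalization, as $-P(\theta) G(\theta)^{-1}\nabla f(\theta)$.

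The only mildly delicate point is the degenerate case $P(\theta) G(\theta)^{-1}\nabla f(\theta) = 0$, which corresponds to $\theta$ being a critical point of $f$ restricted to $\Theta$; in that case the objective vanishes for all admissible $v$ and any such $v$ is a minimizer, so the formula is vacuous and no ambiguity arises. Apart from this, the argument is entirely a linear-algebra computation, and I do not anticipate any real technical obstacle.
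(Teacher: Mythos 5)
Your proposal is correct and follows essentially the same route as the paper's proof: rewrite $\nabla f^\top v$ as $\langle G^{-1}\nabla f, v\rangle_G$, use the $G$-orthogonality of $P$ to replace $G^{-1}\nabla f$ by $PG^{-1}\nabla f$ for $v$ in the tangent space, and conclude via Cauchy--Schwarz. Your treatment is if anything slightly more careful, since you make the equality case of Cauchy--Schwarz and the degenerate case $PG^{-1}\nabla f=0$ explicit, both of which the paper leaves implicit.
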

\begin{proof}
We have that
\begin{equation*}
    \begin{split}
        \frac{df(\theta)}{dt}=v^\top \nabla f(\theta) = v^\top G(\theta) G(\theta)^{-1} \nabla f(\theta)=\langle v, G(\theta)^{-1} \nabla f(\theta)\rangle_{G(\theta)}=\langle v, \nabla^G f(\theta)\rangle_{G(\theta)},
    \end{split}
\end{equation*}
where we denote by $\nabla^G f(\theta)=G(\theta)^{-1} \nabla f(\theta)$ the metric gradient. So we have the problem
\begin{equation*}
\begin{split}
    &\operatorname{arg}\min_v \left\{ \langle v, \nabla^G f(\theta)\rangle_{G(\theta)}~\bigg|~v\in T_\theta \Theta,~\|v\|_{G(\theta)}\leq 1\right\}\\
    =&\operatorname{arg}\min_v \left\{ \langle v, P(\theta)\nabla^G f(\theta)\rangle_{G(\theta)}~\bigg|~v\in T_\theta \Theta,~\|v\|_{G(\theta)}\leq 1\right\}\\
    =&-\frac{P(\theta)\nabla^G f(\theta)}{\|P(\theta)\nabla^G f(\theta)\|_{G(\theta)}}=-\frac{P(\theta)G(\theta)^{-1}\nabla f(\theta)}{\|P(\theta) G(\theta)^{-1}\nabla f(\theta)\|_{G(\theta)}},
\end{split}
\end{equation*}
where the first equation follows from the definition of the orthogonal projection.
\end{proof}

\begin{remark}
As previously mentioned, selecting  \eqref{eq:PG-1} in~\eqref{aeng0} yields $\theta_{k+1}-\theta_k =-2\eta r_{k+1} v_k \in T_{\theta_k} \Theta\cong \operatorname{ker}(B)$. Therefore,~\eqref{aeng0} returns feasible updates; that is,~\eqref{kin} is valid.
\end{remark}

The following theorem provides a concise summary of the stability and convergence properties of~\eqref{aeng0} in the context of the chosen ~\eqref{eq:PG-1}.
\begin{theorem}\label{thmT}
Let  $\theta_0 \in \Theta=\{\theta \in \mathbb{R}^n, \;B \theta =b\}$, and $G(\theta)$ be symmetric and positive definite with $0<\lambda_1 \leq \|G(\theta)\|_2 \leq \lambda_n$ for all $\theta \in \Theta$. Furthermore, let $(\theta_k)$ be generated by \eqref{aeng0} with~\eqref{eq:PG-1}.  The following statements hold: 
\begin{itemize}
\item[(1)] {\bf Unconditional Energy Stability:}  It satisfies unconditional energy stability, as stated in Theorem \ref{thm1}. 
\item[(2)]  {\bf Positive Lower Bound:} The statement in Lemma \ref{tau} regarding a positive lower bound for $r_k$ remains valid.  
\item[(3)]  {\bf Monotonic Convergence:}  Under the same assumptions and conditions on $\eta$ as described in Theorem \ref{cvg}, $L(\theta_k)$ decreases monotonically and converges towards a local minimum value of $L$ with
$$
\lim_{k\to \infty} \|P_k^\top \nabla L(\theta_k)\|\to 0.
$$
\item[(4)]  {\bf Convergence Rates:}  Convergence rates are obtained in three distinct scenarios:
\begin{itemize}
\item[(i)] For any $\eta>0$ and $r_0>0$, it holds that 
\begin{equation*}
\min_{j<k}\|P_j^\top \nabla L(\theta_j)\|^2 \leq \frac{2r_0\lambda^2_n}{\eta r_k k}\left(\max_{j<k}L(\theta_j)+c \right).
\end{equation*}
Under the same assumptions and conditions on $\eta$ as described in Theorem \ref{cvg}, where $r_k>r^*>0$,  the following convergence rates are established:  
\item[(ii)] If $L$ satisfies the projected PL condition with a minimum $\theta^* \in \Theta$:
\begin{equation}\label{PPL}
\frac{1}{2}\|P^\top\nabla L(\theta)\|^2 \geq \mu (L(\theta)-L(\theta^*)),\quad\forall \theta \in \Theta,
\end{equation}
where $\mu>0$, then the sequence $\{\theta_k\}$ has finite length, and hence converges. Furthermore, 
\begin{align}\label{ctheta+}    & L(\theta_k)-L(\theta^*)\leq e^{-c_0k r_k/\lambda_n}(L(\theta_{0})-L(\theta^*)), \quad c_0:=\frac{\mu \eta}{l(\theta_0)}.
\end{align}
\item[(iii)] If $L$ is convex with a minimum $\theta^* \in \Theta$, then:
\begin{equation}\label{cvg-cvx+}
L(\theta_k)-L(\theta^*)\leq \frac{c_1\lambda_n\|\theta_0-\theta^*\|^2}{k r_k},\quad c_1=\frac{2l(\theta_0)}{\eta}.
\end{equation}
\end{itemize}
\end{itemize}
\end{theorem}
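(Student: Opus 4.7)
My plan is to repeat the four-part development used in the unconstrained analysis (Theorem~\ref{thm1}, Lemma~\ref{tau}, Theorem~\ref{cvg}, and Theorem~\ref{thm2}), replacing scalar bounds such as $v_j^\top A_j v_j \geq \lambda_1 \|v_j\|^2$ with their projection-compatible analogs. Everything rests on a single algebraic observation: since $P_k$ is the $G_k$-orthogonal projection onto $\ker(B)$, we have the symmetry $G_k P_k = P_k^\top G_k$, and consequently
\begin{equation*}
G_k v_k = P_k^\top \nabla l(\theta_k), \qquad \nabla l(\theta_k)^\top v_k = v_k^\top G_k v_k = \|v_k\|_{G_k}^2.
\end{equation*}
The first identity converts $\|v_k\|$-bounds into $\|P_k^\top \nabla L(\theta_k)\|$-bounds, and the second replaces $v_j^\top A_j v_j \geq \lambda_1 \|v_j\|^2$ by $\|v_j\|_{G_j}^2 \geq \lambda_1 \|v_j\|^2$. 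A third identity, needed only for the convex case, is $v_k^\top G_k w = \nabla l(\theta_k)^\top w$ for every $w \in \ker(B)$ (because $P_k w = w$); applied with $w=\theta_k-\theta^*$, this recovers the standard inner product $\nabla l^\top (\theta_k-\theta^*)$ in the projected setting.

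Armed with these, Part (1) is immediate: Theorem~\ref{thm1}'s proof uses only the three update rules in~\eqref{aeng0} and is insensitive to the form of $T_k$. Part (2) repeats Lemma~\ref{tau} verbatim, with $v_j^\top G_j v_j$ playing the role of $v_j^\top A_j v_j$ in~\eqref{lmd}. Part (3) combines $\alpha$-smoothness with $\nabla L(\theta_j)^\top(\theta_{j+1}-\theta_j) = -4\eta l(\theta_j) r_{j+1} \|v_j\|_{G_j}^2$ and $\eta \leq \eta_0$ to produce the monotone-decrease bound $L(\theta_{j+1})-L(\theta_j) \leq -2\eta l^* \lambda_1 r_{j+1}\|v_j\|^2$; summing and using the positive lower bound $r_{j+1}\geq r^* > 0$ from Part (2) forces $\|v_j\|\to 0$, whence $\|P_j^\top \nabla L(\theta_j)\| \leq 2 l(\theta_0)\lambda_n \|v_j\| \to 0$ by the first identity. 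Part (4i) follows by summing $r_{j+1}-r_j=-2\eta r_{j+1}\|v_j\|^2$ exactly as in Theorem~\ref{thm2}(i), replacing $\|\nabla l\|^2 \leq \lambda_n^2 \|v\|^2$ with $\|P_j^\top \nabla l(\theta_j)\|^2 = \|G_j v_j\|^2 \leq \lambda_n^2 \|v_j\|^2$ and $\|P_j^\top \nabla L\|^2 = 4 l^2 \|P_j^\top \nabla l\|^2$.

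For Part (4ii), the Part-(3) inequality sharpens to $L(\theta_{j+1})-L(\theta_j) \leq -\frac{\eta r_{j+1}}{2 l(\theta_j) \lambda_n} \|P_j^\top \nabla L(\theta_j)\|^2$ using the first identity; invoking the projected PL condition~\eqref{PPL} together with $l(\theta_j)\leq l(\theta_0)$ produces the contraction $L(\theta_{j+1})-L^* \leq (1- c_0 r_{j+1}/\lambda_n)(L(\theta_j)-L^*)$, and iterating with $r_{j+1}\geq r_k$ and $1-x\leq e^{-x}$ gives~\eqref{ctheta+}.

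I expect Part (4iii) to be the main obstacle. The natural Lyapunov quantity $\|\theta_j - \theta^*\|_{G_j}^2$, combined with the third identity, convexity of $L$ (which yields $\nabla l(\theta_j)^\top(\theta_j - \theta^*) \geq (L(\theta_j)-L^*)/(2l(\theta_j))$), and the decrease-derived bound $\|v_j\|_{G_j}^2 \leq (L(\theta_j)-L(\theta_{j+1}))/(2\eta l(\theta_j) r_{j+1})$ from Part (3), produces the one-step estimate
\begin{equation*}
\|\theta_{j+1}-\theta^*\|_{G_j}^2 \leq \|\theta_j - \theta^*\|_{G_j}^2 - \frac{2\eta r_{j+1}}{l(\theta_0)}(L(\theta_{j+1})-L^*),
\end{equation*}
after also using $l(\theta_j)\leq l(\theta_0)$. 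The difficulty is that this does not telescope directly, because the metric at step $j$ is $G_j$ rather than $G_{j+1}$. I plan to bridge this gap via the uniform eigenvalue bounds $\lambda_1 I \preceq G_j \preceq \lambda_n I$ to convert between the $G_j$-weighted and Euclidean norms at the summation step, which is precisely how the $\lambda_n$ factor appears in~\eqref{cvg-cvx+}; then the monotonicity $L(\theta_{j+1})-L^*\geq L(\theta_k)-L^*$ together with $r_{j+1}\geq r_k$ pulls out the $k r_k(L(\theta_k)-L^*)$ factor, yielding the desired $O(1/(k r_k))$ rate.
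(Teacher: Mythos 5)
Parts (1), (2), (3), (4i) and the rate in (4ii) are correct and essentially reproduce the paper's own route: your identities $G_kv_k=P_k^\top\nabla l(\theta_k)$ and $\nabla l(\theta_k)^\top v_k=\|v_k\|_{G_k}^2$ are exactly what the paper extracts from $T_k=P_kG_k^{-1}=G_k^{-1}P_k^\top$ and $T_k=P_kG_k^{-1}P_k^\top$, and they reduce everything to the unconstrained computations with $\|P_k^\top\nabla L\|$ in place of $\|\nabla L\|$. One omission in (4ii): the statement also asserts that $\{\theta_k\}$ has finite length, which you never address. The paper proves it via $\sqrt{w_k}-\sqrt{w_{k+1}}\ge\frac{1}{2\sqrt{w_k}}(w_k-w_{k+1})\ge\frac{\sqrt{2\mu}\lambda_1}{4\lambda_n}\|\theta_{k+1}-\theta_k\|$ (with $w_k:=L(\theta_k)-L(\theta^*)$), combining the descent bound $w_k-w_{k+1}\ge\frac{\lambda_1}{2\eta_k}\|\theta_{k+1}-\theta_k\|^2$ with the projected PL estimate on $\|G_k(\theta_{k+1}-\theta_k)\|$; this goes through verbatim with your identities, so it is a three-line repair.

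The genuine gap is (4iii), and it sits exactly where you predicted trouble. Your one-step estimate $\|\theta_{j+1}-\theta^*\|_{G_j}^2\le\|\theta_j-\theta^*\|_{G_j}^2-\frac{2\eta r_{j+1}}{l(\theta_0)}w_{j+1}$ is correct, but the proposed repair --- converting between the $G_j$- and $G_{j+1}$-weighted norms via $\lambda_1 I\preceq G_j\preceq\lambda_n I$ ``at the summation step'' --- does not close the argument. Converting at every step introduces a factor $\lambda_n/\lambda_1>1$ per iteration, hence $(\lambda_n/\lambda_1)^k$ overall; summing the raw inequalities instead leaves a right-hand side of the form $\|\theta_0-\theta^*\|_{G_0}^2-\|\theta_k-\theta^*\|_{G_{k-1}}^2+\sum_{j=1}^{k-1}(\theta_j-\theta^*)^\top(G_j-G_{j-1})(\theta_j-\theta^*)$, whose metric-variation terms are only controlled by $(\lambda_n-\lambda_1)\|\theta_j-\theta^*\|^2$ each and therefore contribute $O(k)$, destroying the $O(1/(kr_k))$ rate. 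The paper avoids weighted distances entirely: it uses convexity together with $P_k(\theta_k-\theta^*)=\theta_k-\theta^*$ (valid because $\theta_k-\theta^*\in\operatorname{ker}(B)$) to write $w_k\le(P_k^\top\nabla L(\theta_k))^\top(\theta_k-\theta^*)\le\|P_k^\top\nabla L(\theta_k)\|\,\|\theta_0-\theta^*\|$, invoking the monotone-distance claim $\|\theta_k-\theta^*\|\le\|\theta_0-\theta^*\|$ carried over from the proof of Theorem~\ref{thm2}; feeding this into the descent inequality $w_{k+1}-w_k\le-\frac{\eta_k}{2\lambda_n}\|P_k^\top\nabla L(\theta_k)\|^2$ yields $w_{k+1}\le w_k-\frac{\eta_k}{2\lambda_n\|\theta_0-\theta^*\|^2}w_k^2$, and the standard $1/w_k$ telescoping finishes. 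You should switch to that route for (4iii); your Lyapunov strategy would only work if $G$ were constant along the iterates.
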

\begin{proof}
(1) The proof for Theorem \ref{thm1} remains applicable in this context. 

(2) To demonstrate that (\ref{lmd}) still holds in the case of (\ref{eq:PG-1}), we recall that $P_j$ is the $G_j$-orthogonal projection operator, and $P_j$ is an involution, $P_j^2=P_j$, and $G_j$-symmetric, $G_j P_j= P_j^\top G_j$. Hence, $T_j=P_jG^{-1}_j=G^{-1}_jP^\top_j$ and $T_j = P_j T_j = P_j G^{-1}_jP^\top_j$.
We can express the gradient term as: 
\begin{align}\label{Fdtheta}
\nabla L(\theta_j)^\top\notag (\theta_{j+1}-\theta_j) 
&= -4\eta l(\theta_j) r_{j+1}\nabla l(\theta_j)^\top G^{-1}_jP_j^\top\nabla l(\theta_j) \\
&= -4\eta l(\theta_j) r_{j+1} \|P_j^\top\nabla l(\theta_j)\|^2_{G_j^{-1}}.
\end{align}
Considering that
$$
v_j=T_j\nabla l(\theta_j)=
G_j^{-1}P_j^\top\nabla l(\theta_j)\quad\Rightarrow\quad P_j^\top\nabla l(\theta_j)=G_jv_j,
$$
we further bound (\ref{Fdtheta}) by  
\begin{align*}
\nabla L(\theta_j)^\top (\theta_{j+1}-\theta_j) & =
-4\eta l(\theta_j) r_{j+1} \|G_jv_j\|^2_{G_j^{-1}} \\
& \leq -4 l(\theta_j) \eta\lambda_1 r_{j+1}\|v_j\|^2= 2 l(\theta_j) \lambda_1(r_{j+1}-r_j).
\end{align*} 
This establishes the validity of (\ref{lmd}), and the remaining portion of the proof aligns with that of Lemma \ref{tau}.

The proofs for (3) and (4) mirror those of Theorem \ref{thm2}, and therefore deferred to Appendix \ref{pf-thmT}. 
\end{proof}

\begin{remark}\label{rmk:PL} 
Rather than relying on the conventional gradient  $\nabla L(\theta)$, the convergence and convergence rate of $L(\theta_k)$  are influenced by the projected gradient
 $P^\top \nabla L(\theta)$. This is also evident at the continuous level: the projected PL condition (\ref{PPL}) implies:   
$$
\frac{d}{dt}L(\theta(t))= -\|P^\top \nabla L(\theta)\|^2_{G^{-1}} \leq -\frac{2\mu}{\lambda_n} 
(L(\theta(t))-L^*).
$$
Thus, for any $t>0$, 
$$
L(\theta(t))-L^* \leq e^{-2\mu t/\lambda_n} \left(L(\theta(0))-L^* \right),
$$
where $\theta(0) = \theta_0$, representing the initial guess.
\end{remark}

\begin{remark} In general, verifying the projected PL condition directly can be challenging. To illustrate this new structural condition, consider an example involving loss functions of the form:
$$
L(\theta)=\frac{1}{2}\left(\beta \theta_1^2+\alpha \theta_2^2 \right), 
$$
subject to a general linear constraint 
$$
q(\theta)=a\theta_1+b\theta_2 -1=0,  
$$
where $ab\not=0, \alpha \geq \beta >0$ are constants. This constrained minimization problem is convex, thereby admiting  a unique solution:  
$$
\theta^* = \bigg(\frac{a\alpha}{a^2\alpha+b^2\beta}, \frac{b\beta}{a^2\alpha+b^2\beta}\bigg). 
$$
Upon a careful calculation (refer to Appendix \ref{pplfunc} for details), it can be verified that the projected PL condition holds for any $\theta \in \Theta$, where 
$$
\mu = \frac{a^2\alpha+b^2\beta}{a^2+b^2}.
$$
\end{remark}

To apply the method and theoretical results to a specific  optimization task, it remains essential to identify and compute matrices $G_j^{-1}$. This aspect will be addressed in the subsequent sections.  

\begin{remark}
It is observed that the convergence rate for convex objectives in (4)-(iii) remains the same as that in the scenario $T_k=A_k^{-1}$.
\end{remark} %

\section{Hessian-Riemannian metric}\label{ap}

In Section~\ref{subsec:theory_constrained}, we explored  preconditioning matrices of the form~\eqref{eq:PG-1}, where $G$ is a generic metric. In this section, we  discuss a more specific choice for $G$ based on the Hessian of a suitable convex function. The preconditioned gradient descent algorithm~\eqref{eq:pre-gd} based on such $G$ is known as the Hessian-Riemannian gradient descent~\cite{ABB04}.

For analysis purposes, we previously assumed only affine equality constraints. In this section, we do not perform analysis and include more general convex inequality constraints. We believe the reader will benefit from this more general exposition, and it will motivate our future work. More precisely, we introduce
\begin{equation}\label{eq:M=suplevelU}
    \M=\{\theta~|~U(\theta)\geq 0\},
\end{equation}
where $U$ is a concave function, and assume
\begin{equation}\label{eq:ThetawithU}
\Theta=\left\{\theta~|~U(\theta)\geq0,~B\theta=b \right\}.
\end{equation}
Thus,~\eqref{minp} reduces to
\begin{align*}
\min\quad  & L(\theta) \\
\text{s.t.}\quad  & B \theta=b,\quad \text{(linear equality constraints)} \\
& U(\theta)\geq 0. \quad \text{(inequality constraints)}
\end{align*}
In large-scale nonlinear programming, popular methods for solving the above problem include the interior-point method and active-set SQP methods \cite{NW06}. 
In this section, we show that this problem can be more efficiently solved using AEPG~\eqref{aeng0} with a suitable choice of $T_k$.

\subsection{Hessian-Riemannian formulation} 

We start by reviewing the Hessian-Riemannian framework in the geometric context preceding Lemma~\ref{lem:proj_G}. We first consider only inequality constraints; that is,
\begin{equation}\label{mp}
\min\{ L(\theta)\;|\; \theta \in \M \}, 
\end{equation}
where $L\in C^1$ is bounded from below,  and $\M \subset \R^n$ is a closed convex set such that $\operatorname{int}(\M)\neq \emptyset$. 
Standard gradient descent for~\eqref{mp} may not necessarily stay in $\M$. To address this limitation, one approach is to introduce a suitable Riemannian metric on $\M$ that shrinks the gradients near $\partial \M$ to prevent updates from leaving $\M$.

Assume $\M$ is endowed with a metric $G$ as in~\eqref{eq:metricG}. Then for a smooth $f:\M \to \R$ the metric gradient and chain rule are
\[
\nabla^G f(\theta)=G(\theta)^{-1}\nabla f(\theta),\quad \frac{df(\theta)}{dt}=\left\langle \nabla^G f(\theta),\dot{\theta} \right\rangle_{G(\theta)},
\]
where $t\mapsto \theta(t) \in \mathcal{M}$ is an arbitrary smooth curve (see Lemma~\ref{lem:proj_G}).

For a convex function $L$,  the variational characterization of $\theta^*\in \arg\min\limits_{\M} L$ is given by: 
\[
\langle \nabla^G L(\theta), \theta-\theta^*\rangle_{G(\theta)} \geq 0, \quad \forall \theta \in \M.
\]
Therefore,  if there exists a function $V$ such that $\nabla^G V(\theta)=\theta-\theta^*$, then $V$ serves as a natural Lyapunov function for the gradient flow
\[
\dot{\theta}=-\nabla^G L(\theta).
\]
Indeed, it follows that
\[
\frac{dV(\theta)}{dt}=\langle \nabla^G V(\theta),\dot{\theta}\rangle_{G(\theta)}=-\langle \nabla^G L(\theta), \theta-\theta^*\rangle_{G(\theta)} \leq 0.
\]
The existence of such Lyapunov functions is valuable for proving convergence results for gradient descent algorithms. The following theorem~\cite{ABB04} characterizes $G$ for which such $V$ can be found. Furthermore, these $V$ are nothing else but Bregman divergences.

\begin{theorem}[Theorem 3.1~\cite{ABB04}]\label{thm-H}
A metric $G \in C^1(\operatorname{int}(\M))$ ensures that for any given $\xi \in \operatorname{int}(\M)$, there exists a functional $V_{\xi}:\operatorname{int}(\M)\to\R$ satisfying $\nabla^G V_{\xi}(\theta)=\theta-\xi$ if and only if there exists a strictly convex function $h\in C^3(\operatorname{int}(\M))$ such that $\forall\theta\in\operatorname{int}(\M)$, $G(\theta)=\nabla^2 h(\theta)$. Additionally, defining $D_h:\operatorname{int}(\M)\times\operatorname{int}(\M)\to\R$ by
\begin{equation}\label{bmd}
D_h(\xi,\theta)=h(\xi)-h(\theta)-\langle\nabla h(\theta),\xi-\theta\rangle
\end{equation}
and taking $V_\xi=D_h(\xi,\cdot)$, we get 
$\nabla^G V_\xi(\theta)=\theta-\xi$. 
\end{theorem}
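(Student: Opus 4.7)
The claim is an equivalence, so the plan is to handle the two implications separately: a direct computation for the sufficient direction, and an integrability argument (two applications of the Poincaré lemma on the star-shaped open set $\operatorname{int}(\M)$) for the necessary direction.

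For the sufficient direction ($\Leftarrow$), assume $G(\theta)=\nabla^2 h(\theta)$ for a strictly convex $h\in C^3(\operatorname{int}(\M))$ and set $V_\xi(\theta)=D_h(\xi,\theta)$ as in~\eqref{bmd}. The plan is to compute $\nabla_\theta V_\xi(\theta)$ directly from~\eqref{bmd}: the two occurrences of $\nabla h(\theta)$ that emerge (one from differentiating $-h(\theta)$, and the other from differentiating the inner-product term through the $-\theta$ factor) cancel, leaving $\nabla V_\xi(\theta)=\nabla^2 h(\theta)(\theta-\xi)=G(\theta)(\theta-\xi)$. Multiplying by $G(\theta)^{-1}$ gives $\nabla^G V_\xi(\theta)=\theta-\xi$, as required.

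For the necessary direction ($\Rightarrow$), the hypothesis yields, for each $\xi \in \operatorname{int}(\M)$, a function $V_\xi$ with $\nabla V_\xi(\theta)=G(\theta)(\theta-\xi)$. My plan is the following. (a) Since $\operatorname{int}(\M)$ is convex, hence simply connected, the Poincaré lemma says that the vector field $\theta\mapsto G(\theta)(\theta-\xi)$ is a gradient field iff its Jacobian is symmetric in $\theta$. (b) Computing the Jacobian in coordinates,
\[
\partial_{\theta_k}\bigl(G(\theta)(\theta-\xi)\bigr)_i = \sum_{j}\partial_k G_{ij}(\theta)(\theta_j-\xi_j) + G_{ik}(\theta),
\]
and using $G_{ik}=G_{ki}$, the symmetry reduces to $\sum_j(\partial_k G_{ij}-\partial_i G_{kj})(\theta_j-\xi_j)=0$ for every $\xi$. (c) Since the coefficient $\partial_k G_{ij}-\partial_i G_{kj}$ does not depend on $\xi$, letting $\xi$ vary over the open set $\operatorname{int}(\M)$ with $\theta$ fixed forces $\partial_k G_{ij}=\partial_i G_{kj}$ pointwise. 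Combining with $G_{ij}=G_{ji}$ rearranges this into $\partial_k G_{ij}=\partial_j G_{ik}$, exactly the compatibility needed to solve $\partial_j\psi_i=G_{ij}$ for a 1-form $\psi$. (d) Since $G$ is symmetric, $\psi$ is itself closed, so a second application of Poincaré gives $\psi=\nabla h$ for some scalar $h$, yielding $G=\nabla^2 h$. Strict convexity of $h$ follows from positive-definiteness of $G$, and $h\in C^3$ from $G\in C^1$.

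The main obstacle is step (c): passing from a $\xi$-parametrized family of curl-free identities to an intrinsic symmetry of the partials of $G$. The resolution is that the offending term is linear in $\theta-\xi$ with $\xi$-independent coefficient, so its vanishing for all $\xi$ in an open set forces the coefficient itself to vanish. Once this $\xi$-free symmetry is in hand, the remainder is a standard double application of the Poincaré lemma on the star-shaped domain $\operatorname{int}(\M)$.
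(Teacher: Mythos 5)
The paper does not actually prove this statement --- it is quoted from \cite{ABB04} (Theorem 3.1 there) without proof --- so there is no in-paper argument to compare against; your proof is correct and is essentially the standard argument from that reference. Both halves check out: the cancellation of the two $\nabla h(\theta)$ terms gives $\nabla V_\xi(\theta)=\nabla^2h(\theta)(\theta-\xi)$ for sufficiency, and for necessity the symmetry of the Jacobian of $\theta\mapsto G(\theta)(\theta-\xi)$ for every $\xi$ in the open set $\operatorname{int}(\M)$ does force $\partial_k G_{ij}=\partial_i G_{kj}$ pointwise (a linear functional of $\theta-\xi$ vanishing on an open set is zero), after which the two applications of the Poincar\'e lemma go through. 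The only implicit inputs --- that $G(\theta)$ is symmetric positive definite and that $\operatorname{int}(\M)$ is convex, hence star-shaped --- are supplied by the paper's standing hypotheses on metrics and on $\M$, so I see no gap.
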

The preceding discussion motivates the application of the metric
\begin{equation}\label{eq:G=Hessh}
    G(\theta)=\nabla^2 h(\theta),\quad \theta \in \operatorname{int}(\M),
\end{equation}
for a suitable convex function $h:\operatorname{int}(\M) \to \R$. There are many choices of $h$ for a fixed $\M$. The \textit{Legendre type}  identified in \cite{ABB04}, is a class of $h$ suitable for enforcing $\M$ and performing convergence analysis. Precisely, we say that $h$ is of Legendre type if: 
\begin{enumerate}
\item $h\in C^2(\operatorname{int}(\M))$ is strictly convex,
\item $|\nabla h(\xi_j)|\to +\infty$ for all $\{\xi_j\}\subset\operatorname{int}(\M)$ converging to a boundary point of $\M$.
\end{enumerate}
For constraints described by $U$ as in ~\eqref{eq:M=suplevelU}, we discuss the construction of $h$ in Appendix \ref{cons}.

\subsection{Adding linear equality constraints}\label{subsec:HRGD-linear}

We now consider the case when~\eqref{eq:ThetawithU} holds; that is, we have the optimization problem:
\begin{equation}\label{mp+}
\min \{ L(\theta)\;|\; U(\theta)\geq 0,~ B\theta=b\}.
\end{equation}
Following Section~\ref{subsec:theory_constrained}, we can combine the Hessian-Riemannian metric and the projection operator onto $\operatorname{ker}(B)$. Thus,  \eqref{mp+} is in the form of \eqref{minp} and can be solved by \eqref{aeng0} with 
\begin{equation}\label{H}
\begin{split}
    T_k=P(\theta_k) \nabla^2 h(\theta_k)^{-1}=\nabla^2 h(\theta_k)^{-1} \left( I- B^\top (B \nabla^2 h(\theta_k)^{-1}B^\top)^{-1}B 
\nabla^2 h(\theta_k)^{-1} \right).
\end{split}
\end{equation}

\begin{algorithm}
\caption{AEPG for solving problem~\eqref{mp+} with $U$ in the form of~\eqref{dm}.  
}
\label{alg2}
\begin{algorithmic}[1] 
\Require $G$: a metric in $\M$ and $G(\theta)=\nabla^2 h(\theta)$ for some $h$ of the Legendre type; $B$: equality constraint matrix; $c$: a parameter such that $L({\theta})+c>0$ for all $\theta \in \M$; 
$\eta^*$: an upper bound for the step size; 
$\epsilon\in[0,\frac{1}{2}]:$ a small positive constant; and
$K$: the total number of iterations.
\Require ${\theta}_0$: initial guess of $\theta$ satisfying $B\theta_0=b$ and $U_i(\theta_0)\geq0$ for $i\in[p]$; ${r}_{0}=\sqrt{L({\theta}_0)+c}$: initial energy.
\For{$k=0$ to $K-1$}
\State Compute: $\nabla^2 h(\theta_k)^{-1}$
\State $\nabla l(\theta_{k}) = \nabla L(\theta_{k}) /(2\sqrt{L({\theta}_{k})+c})$ 
\State $A_k^\dagger=\nabla^2 h(\theta_k)^{-1} \left( I- B^\top (B \nabla^2 h(\theta_k)^{-1}B^\top)^{-1}B 
\nabla^2 h(\theta_k)^{-1} \right)$
\State $v_k = A_k^\dagger\nabla l(\theta_{k})\quad$ (compute Riemannian gradient)
\State Line search: $\eta_k = {\rm clip}\big({\rm argmax}\{\eta\;|\;I_i(\eta)\geq\epsilon U_i(\theta_k),i\in[p]\},0,\eta^*\big)$ 
\State $r_{k+1} = r_{k}/(1+2\eta_k\|v_k\|^2)\quad$ (update energy)
\State ${\theta}_{k+1} = {\theta}_{k} - 2\eta_k r_{k+1}v_k\quad$ (update parameters)
\EndFor
\State \textbf{return} ${\theta}_K$
\end{algorithmic}
\end{algorithm}

\begin{remark}
The line search step in Algorithm~\ref{alg2} is to ensure that $(\theta_k)_{k\geq0}$ stay in $\operatorname{int}(\M)$. 
Simultaneously, our experiments show that this can be simply guaranteed by choosing $\eta$ suitably small.
\end{remark}

%
\section{Natural gradient descent}\label{sec:NGD}

In this section, we show that the HRGD can be cast as an NGD.

\subsection{Natural Gradient Descent}\label{subsec:NGD}

We present the Natural Gradient Descent (NGD) following the exposition in~\cite{NLY22}. Let $(\mathcal{M},g)$ be a (formal) Riemannian manifold, $\Theta \subset \mathbb{R}^k$ a closure of a non-empty open set, $\phi:\Theta \mapsto \mathcal{M}$ a smooth forward model that parametrizes $\M$ explicitly, and $f:\mathcal{M}\to \mathbb{R}$ a smooth function. Furthermore, consider the optimization problem
\begin{equation}\label{eq:L_composite}
    \min_{\theta \in \Theta} f(\phi(\theta)).
\end{equation}
Let $\left\{\partial^g_{\theta_i} \phi(\theta) \right\}_{i=1}^k \subset T_{\phi(\theta)}\mathcal{M}$ be the tangent vectors. Then the NGD direction for this problem is given by
\begin{equation}\label{eq:p_nat}
    p^{nat}=-G(\theta)^{-1} \nabla_\theta f(\phi(\theta)), 
\end{equation}
where
\begin{equation*}
    G_{ij}(\theta)=\langle \partial^g_{\theta_i}\phi(\theta), \partial^g_{\theta_j} \phi(\theta) \rangle_{g(\phi(\theta))},\quad 1\leq i,j \leq k,~\theta \in \Theta,
\end{equation*}
is called an \textit{information matrix}. This choice of the metric corresponds to the steepest descent as measured in the ``natural metric'' of the model-manifold $(\mathcal{M},g)$. Hence, there is an inherent robustness with respect to the parameterization $\theta \mapsto \phi(\theta)$~\cite{NLY22}.

For simplicity, we assume that $G(\theta)$ is invertible for all $\theta \in \Theta$. Otherwise, $G(\theta)^{-1}$ should be replaced by the pseudoinverse $G(\theta)^\dagger$. 

To establish a connection between NGD and Hessian-Riemannian Gradient Descent (HRGD), we first present a variational formulation of $p^{nat}$ in~\eqref{eq:p_nat}. The following lemma is elementary and can be found in many works on NGD. Nevertheless, we present it here for the convenience of the reader. For a more comprehensive discussion of the subject,  we refer to~\cite{NLY22} and numerous references therein.

\begin{lemma}\label{lma:p_nat_variational}
    Let $p^{nat}$ be given by~\eqref{eq:p_nat}. Then one has that 
    \begin{equation}\label{eq:p_nat_var}
        p^{nat}=\arg\min_{p\in \mathbb{R}^k} \left\|\nabla^g f(\phi(\theta))+\sum_{i=1}^k p_i \partial^g_{\theta_i} \phi(\theta) \right\|_{g(\phi(\theta))}^2.
    \end{equation}
\end{lemma}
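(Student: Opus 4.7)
The plan is to treat the right-hand side of \eqref{eq:p_nat_var} as a strictly convex quadratic in $p$ and verify via first-order optimality that its unique minimizer coincides with $p^{nat}$ from \eqref{eq:p_nat}. Writing $\Psi(p)$ for the squared-norm objective in \eqref{eq:p_nat_var} and expanding by bilinearity of $g(\phi(\theta))$, I would obtain
\[
\Psi(p)=\|\nabla^g f(\phi(\theta))\|^2_{g(\phi(\theta))}+2\sum_{i=1}^k p_i\,\langle \nabla^g f(\phi(\theta)),\partial^g_{\theta_i}\phi(\theta)\rangle_{g(\phi(\theta))}+p^\top G(\theta)p,
\]
where the quadratic term is $p^\top G(\theta)p$ precisely because the information matrix $G(\theta)$ is, by definition, the Gram matrix of the tangent vectors $\partial^g_{\theta_i}\phi(\theta)$ in the $g(\phi(\theta))$ inner product. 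Since $G(\theta)$ is assumed invertible and is positive semidefinite by construction, $\Psi$ is strictly convex, and its unique minimizer is characterized by the normal equations $G(\theta)p=-b(\theta)$, where $b_i(\theta):=\langle \nabla^g f(\phi(\theta)),\partial^g_{\theta_i}\phi(\theta)\rangle_{g(\phi(\theta))}$.

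The remaining step is to identify $b(\theta)$ with $\nabla_\theta f(\phi(\theta))$. The defining property of the metric gradient is $df_{\phi(\theta)}(w)=\langle \nabla^g f(\phi(\theta)),w\rangle_{g(\phi(\theta))}$ for every $w\in T_{\phi(\theta)}\mathcal{M}$. Choosing $w=\partial^g_{\theta_i}\phi(\theta)$, which is the pushforward of $\partial_{\theta_i}$ under $\phi$, the chain rule yields $b_i(\theta)=\partial_{\theta_i}(f\circ\phi)(\theta)$, i.e., $b(\theta)=\nabla_\theta f(\phi(\theta))$. Substituting back into the normal equations gives $G(\theta)p=-\nabla_\theta f(\phi(\theta))$, and inverting produces $p=-G(\theta)^{-1}\nabla_\theta f(\phi(\theta))=p^{nat}$.

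I do not anticipate any serious obstacle; the lemma is essentially the statement that the natural gradient is the least-squares coefficient vector representing $-\nabla^g f(\phi(\theta))$ in the (generally non-orthogonal) family $\{\partial^g_{\theta_i}\phi(\theta)\}_{i=1}^k$. The one point that deserves care is the double role played by $\nabla^g f$: once as the Riesz representative appearing inside the $g$-inner products in the expansion of $\Psi(p)$, and once implicitly via the chain-rule identity $\partial_{\theta_i}(f\circ\phi)(\theta)=\langle \nabla^g f(\phi(\theta)),\partial^g_{\theta_i}\phi(\theta)\rangle_{g(\phi(\theta))}$. Reconciling these two occurrences is exactly what collapses the normal equations to the NGD formula \eqref{eq:p_nat}.
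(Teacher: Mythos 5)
Your proposal is correct and follows essentially the same route as the paper's proof: expand the squared $g$-norm, recognize the quadratic term as $p^\top G(\theta)p$ via the Gram-matrix definition of the information matrix, identify the linear term with $2p^\top\nabla_\theta f(\phi(\theta))$ by the chain rule, and conclude from the first-order optimality condition. No gaps; the paper's argument is the same computation presented slightly more tersely.
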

\begin{proof}
    Expanding the square norm,  we have: 
    \begin{equation}\label{eq:sq_norm_expansion}
        \begin{split}
            &\left\|\nabla^g f(\phi(\theta))+\sum_{i=1}^k p_i \partial^g_{\theta_i} \phi(\theta) \right\|_{g(\phi(\theta))}^2=p^\top G(\theta) p+2 p^\top \nabla_\theta f(\phi(\theta))+\left\|\nabla^g f(\phi(\theta))\right\|_{g(\phi(\theta))}^2,\\
        \end{split}
    \end{equation}
    where we used the chain rule
    \begin{equation*}
        \langle \nabla^g f(\phi(\theta)), \partial^g_{\theta_i} \phi(\theta)\rangle_{g(\phi(\theta))}=\partial_{\theta_i} f(\phi(\theta)),\quad 1\leq i \leq k.
    \end{equation*}
    Hence, the first-order optimality condition with respect to $p$ in~\eqref{eq:sq_norm_expansion} yields~\eqref{eq:p_nat_var}.
\end{proof}

\subsection{Hessian-Riemannian Gradient Descent as a Natural Gradeint Descent}

Consider again the constrained optimization problem
\begin{equation}\label{eq:xminp}
\min_{x\in \mathbb{R}^d}\quad  \left\{ f(x)~:~U(x)\geq 0,~Bx=b\right\},
\end{equation}
where $U$ is a concave function, and $B\in \mathbb{R}^{m \times d}$ such that $\operatorname{rank}(B)=m<d$. As before, we denote by
\begin{equation}\label{eq:xM}
    \begin{split}
        \mathcal{M}=\{x~:~U(x)\geq 0\},
    \end{split}
\end{equation}
and assume that $h:\operatorname{int}(\mathcal{M}) \to \mathbb{R}$ is a convex function of Legendre type. Recall that the HRGD direction is given by
\begin{equation}\label{eq:xHRGDdir}
    \dot{x}=-P(x) \nabla^2 h(x)^{-1} \nabla f(x),
\end{equation}
where $P(x):\mathbb{R}^d \to \operatorname{ker}(B)$ is the $\nabla^2 h(x)$-orthogonal projection. Our goal is to show that~\eqref{eq:xHRGDdir} can be interpreted as an NGD direction.

Since $\operatorname{rank}B=m$, the solutions of $Bx=b$ have a parametric representation by an affine map $\phi:\mathbb{R}^{d-m} \to \mathbb{R}^d$; that is,
\begin{equation*}
    \left\{x~:~Bx=b\right\} =\left\{\phi(\theta)~:~\theta \in \mathbb{R}^{n}, \quad n:=d-m\right\}.
\end{equation*}
Moreover, without loss of generality, we can assume that the Jacobian of $\phi$ has the form
\begin{equation*}
    D_\theta \phi(\theta)=\begin{pmatrix}W\\I \end{pmatrix},
\end{equation*}
where
\begin{itemize}
    \item $W\in \mathbb{R}^{m \times n}$,
    \item $I\in \mathbb{R}^{n \times n}$ is the identity matrix,
    \item $\operatorname{rank}D_\theta \phi(\theta)=n$,
    \item the column vectors of $D_\theta \phi(\theta)$ form a basis for $\operatorname{ker}(B)$.
\end{itemize}

Hence, denoting by $\Theta=\phi^{-1}(\mathcal{M})$, we obtain that~\eqref{eq:xminp} can be written as
\begin{equation*}
    \min_{\theta \in \Theta} f(\phi(\theta)),
\end{equation*}
recovering the setup in~\eqref{eq:L_composite}.
\begin{lemma}
    Consider the problem~\eqref{eq:xminp}. Suppose that $\Theta,\phi$ are given as above, and $\mathcal{M}$ is equipped with the Hessian metric
    \begin{equation*}
        \langle v_1,v_2 \rangle_{g(x)}=v_1 \cdot \nabla^2 h(x) v_2,\quad v_1,v_2 \in T_x \mathcal{M}\cong \mathbb{R}^d.
    \end{equation*}
    Furthermore, assume that $\dot{\theta}=p^{nat}$, where $p^{nat}$ is defined as in~\eqref{eq:p_nat}. Then for $x=\phi(\theta)$ we have that the equality~\eqref{eq:xHRGDdir} is valid.
\end{lemma}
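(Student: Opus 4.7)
The plan is to use the chain rule $\dot{x}=D_\theta \phi(\theta)\,\dot{\theta}$ together with the definitions of the information matrix and the natural gradient, and then recognize the resulting operator as the composition of the $\nabla^2 h(x)$-orthogonal projection onto $\ker(B)$ with $\nabla^2 h(x)^{-1}$. Set $J:=D_\theta \phi(\theta)$, $H:=\nabla^2 h(x)$, and $x=\phi(\theta)$. Since $\phi$ is affine, $J$ is constant in $\theta$, so $\dot{x}=J\dot{\theta}=Jp^{nat}$. From the Hessian metric one has $G(\theta)_{ij}=(Je_i)^\top H(Je_j)$, i.e.\ $G(\theta)=J^\top H J$, which is invertible because $\operatorname{rank}(J)=n$ and $H\succ 0$; the standard chain rule gives $\nabla_\theta f(\phi(\theta))=J^\top \nabla f(x)$. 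Combining these with~\eqref{eq:p_nat},
\[
\dot{x}\;=\;-\,J\bigl(J^\top H J\bigr)^{-1}J^\top \nabla f(x).
\]

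The remaining step is to establish the matrix identity
\[
J\bigl(J^\top H J\bigr)^{-1}J^\top \;=\; P(x)\,H^{-1},
\]
with $P(x)$ as in~\eqref{AGPk+}. My preferred route is variational: by definition, $P(x)z$ is the unique element of $\ker(B)$ such that $z-P(x)z$ is $H$-orthogonal to $\ker(B)$. Since the columns of $J$ form a basis of $\ker(B)$, write $P(x)z=J\alpha$; the orthogonality condition $J^\top H(z-J\alpha)=0$ forces $\alpha=(J^\top H J)^{-1}J^\top H z$. Hence $P(x)z=J(J^\top H J)^{-1}J^\top H z$ for all $z\in\mathbb{R}^d$, and applying this with $z=H^{-1}w$ yields the identity. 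Substituting into the expression for $\dot{x}$ above recovers the HRGD direction~\eqref{eq:xHRGDdir}.

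I do not expect a substantial technical obstacle. The only points requiring care are (i) the invertibility of $J^\top H J$ (noted above), and (ii) the compatibility of the two representations of the projector, namely the intrinsic closed form in~\eqref{AGPk+} versus the basis-dependent form $J(J^\top H J)^{-1}J^\top H$ derived via the variational characterization. This compatibility is automatic from uniqueness of the $H$-orthogonal projection onto $\ker(B)$, but one should verify that the closed form in~\eqref{AGPk+} indeed satisfies the two defining properties (range in $\ker(B)$, and $I-P(x)$ mapping into the $H$-orthogonal complement of $\ker(B)$), which is a short direct check.
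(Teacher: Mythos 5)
Your proof is correct, and it reaches the same destination by a slightly different road. The paper's proof does not compute anything in closed form: it invokes Lemma~\ref{lma:p_nat_variational} to write $p^{nat}$ as the minimizer of $\left\|\nabla^g f(\phi(\theta))+D_\theta\phi(\theta)\,p\right\|^2_{\nabla^2 h(\phi(\theta))}$, observes that as $p$ ranges over $\mathbb{R}^n$ the vectors $D_\theta\phi(\theta)\,p$ sweep out all of $\operatorname{ker}(B)$, and therefore identifies $D_\theta\phi(\theta)\,p^{nat}$ directly as $-P(x)\nabla^g f(x)=-P(x)\nabla^2 h(x)^{-1}\nabla f(x)$ by the defining property of the $\nabla^2 h$-orthogonal projection; the chain rule then finishes. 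You instead bypass the variational lemma, work from the explicit formula~\eqref{eq:p_nat} with $G=J^\top HJ$ and $\nabla_\theta f(\phi(\theta))=J^\top\nabla f(x)$, and then prove the matrix identity $J(J^\top HJ)^{-1}J^\top=P(x)H^{-1}$ via the orthogonality characterization of the projector. The two arguments hinge on exactly the same two facts (the columns of $J$ form a basis of $\operatorname{ker}(B)$, and uniqueness of the $H$-orthogonal projection), so the difference is largely presentational: your route yields the useful explicit representation $P(x)=J(J^\top HJ)^{-1}J^\top H$ as a byproduct and makes the invertibility of $J^\top HJ$ an explicit checkpoint, while the paper's route is shorter because the least-squares structure has already been packaged in Lemma~\ref{lma:p_nat_variational}. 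Your two flagged care points are both handled correctly — $J^\top HJ\succ0$ follows from $\operatorname{rank}(J)=n$ and $H\succ0$, and the consistency with the closed form~\eqref{AGPk+} is indeed automatic from uniqueness of the projection.
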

\begin{proof}
    From Lemma~\ref{lma:p_nat_variational} we have that
    \begin{equation*}
        p^{nat}=\underset{p\in \mathbb{R}^{n-m}}{\operatorname{argmin}} \left\| \nabla^g f(\phi(\theta))+D_\theta \phi(\theta)~p\right\|^2_{\nabla^2 h(\phi(\theta))},
    \end{equation*}
    Since the columns of $D_\theta \phi(\theta)$ form a basis in $\operatorname{ker}(B)$, we have that vectors of the form $D_\theta \phi(\theta) p$ span the whole subspace $\operatorname{ker}(B)$, and so
    \begin{equation*}
        D_\theta \phi(\theta) p^{nat}=-P(\phi(\theta)) \nabla^g f(\phi(\theta))=-P(\phi(\theta)) \nabla^2 h(\phi(\theta))^{-1} \nabla f(\phi(\theta)),
    \end{equation*}
    where $P(\phi(\theta))$ is the $\nabla^2 h(\phi(\theta))$-orthogonal projection on $\operatorname{ker}(B)$, and we used the fact that
    \begin{equation*}
    \nabla^g f(x)=\nabla^2 h(x)^{-1} \nabla f(x),\quad \forall x \in \operatorname{int}(\mathcal{M}).     
    \end{equation*}
    
    Hence, using the Chain Rule, we obtain
    \begin{equation*}
        \dot{x}=D_\theta \phi(\theta) \dot{\theta}=D_\theta \phi(\theta) p^{nat}=-P(\phi(\theta)) \nabla^2 h(\theta)^{-1} \nabla f(\phi(\theta))=-P(x)\nabla^2 h(x)^{-1} \nabla f(x).
    \end{equation*}
\end{proof}

 %

\section{Wasserstein metric}\label{Wasser}

In Section~\ref{sec:NGD}, we explored the NGD in the context of a general (formal) Riemannian manifold $(\mathcal{M},g)$. In this section, our focus shifts to the Riemannian manifold induced by the Wasserstein metric, commonly known as the optimal transportation metric. This metric has gained recent popularity in data science and inverse problem communities, which explains our motivation to pay special attention to the Wasserstein metric and discuss the computational aspects of the corresponding NGD and AEPG algorithms.

In particular, we present how to compute tangent vectors $\{\partial_{\theta_i}^g \phi(\theta)\}_{i=1}^k$ and discuss efficient methods of computing the NGD direction $p^{nat}$ in~\eqref{eq:p_nat} following the discussion in~\cite{NLY22}. Subsequently,  we combine the Wasserstein NGD with the AEPG algorithm~\eqref{aeng0} and obtain an adaptive Wasserstein NGD algorithm described in Algorithm~\ref{alg3}.

Let $\mathcal{M}=\mathcal{P}_{2,ac}(\mathbb{R}^d)$ be the set of Borel probability measures in $\mathbb{R}^d$ with finite second moments that are absolutely continuous with respect to the Lebesgue measure in $\mathbb{R}^d$. In what follows, we slightly abuse notation, using same symbols for both probability measures and their density functions.

The quadratic Wasserstein distance is then defined as
\begin{equation}\label{eq:W2}
\begin{split}
    &W_2(\rho_1,\rho_2)=\inf_{\pi \in \mathcal{P}_2(\mathbb{R}^{2d})}\left(\int_{\mathbb{R}^{2d}} |x-y|^2 d\pi(x,y) \right)^{\frac{1}{2}}\\
    \text{s.t.}~&\int_{\mathbb{R}^{2d}} \phi(x) d\pi(x,y)=\int_{\mathbb{R}^{d}} \phi(x) d\rho_1(x),\quad \forall \phi \in C^\infty_c(\mathbb{R}^d),\\
    &\int_{\mathbb{R}^{2d}} \psi(y) d\pi(x,y)=\int_{\mathbb{R}^{d}} \psi(y) d\rho_2(y),\quad \forall \psi \in C^\infty_c(\mathbb{R}^d),
\end{split}
\end{equation}
for all $\rho_1,\rho_2 \in \mathcal{M}$. It turns out that $W_2$ can be interpreted as a geodesic distance on a (formal) Rimennian manifold as follows ~\cite{AMS08}. For $\rho \in \mathcal{M}$ we set
\begin{equation}\label{eq:tangent_W2}
    T_\rho \mathcal{M}= \overline{\left\{ \nabla \phi~:~\phi\in C^\infty_c(\mathbb{R}^d)\right\}}^{L^2_{\rho}(\mathbb{R}^d;\mathbb{R}^d)},
\end{equation}
and
\begin{equation}\label{eq:inner_W2}
    \langle v_1,v_2 \rangle_{g(\rho)}=\int_{\mathbb{R}^d} v_1(x) \cdot v_2(x) \rho(x)dx,\quad \forall v_1,v_2 \in T_\rho \mathcal{M}. 
\end{equation}

Our goal is to solve the problem
\begin{equation}\label{prho+}
\min_{\theta\in\Theta} L(\theta):=f(\rho(\theta, \cdot)),
\end{equation}
where $\Theta \subset \mathbb{R}^n$ is a closure of a non-empty open set,  $f:\mathcal{M} \to \mathbb{R}$, and $\rho(\theta,\cdot) \in \mathcal{M}$ for all $\theta \in \Theta$.

In this setting, the NGD direction of $L$ is given by
\begin{equation}\label{eq:p_nat_W2}
p^{W}=-G(\theta)^{-1}\nabla L(\theta),     
\end{equation}
where $G(\theta)\in\mathbb{R}^{n\times n}$ is the information matrix
\begin{equation}\label{eq:G}
    G_{ij}(\theta)=\int_{\R^d}\partial^W_{\theta_i}\rho(\theta,x)\cdot \partial^W_{\theta_j}\rho(\theta,x) ~ \rho(\theta,x) dx  ,\quad 1\leq i,j \leq n, 
\end{equation}
and $\{\partial^W_{\theta_i}\rho\} \subset T_\rho \mathcal{M}$ are the suitable tangent vectors.

Lemma~\ref{lma:p_nat_variational} yields that
\begin{equation}\label{lsp2}
p^W = \argmin_{p \in\R^n}\bigg\|\partial^W_\rho f + \sum_{i=1}^{n}p_i\partial^W_{\theta_i}\rho\bigg\|^2_{L^2_{\rho}(\R^d;\R^d)},   
\end{equation}
where $\partial^W_\rho f $ is the Wasserstein gradient of $f$ at $\rho$.

\begin{proposition}[Proposition 2.2 in \cite{NLY22}]\label{pps}
Let $\partial_\rho f$ and $\{\partial_{\theta_i}\rho\}_{i=1}^n$ be, respectively, the $L^2$ derivative and tangent vectors; that is, the derivative and tangent vectors in the standard sense of calculus of variations. Then we have that
\begin{align}\label{Wf}
\partial^W_\rho f &= \nabla \partial_\rho f,\\ \label{min}
\partial^W_{\theta_i}\rho & = \argmin_v \left\{\|v\|^2_{L^2_{\rho}(\R^d;\R^d)}\;|\;-\nabla \cdot(\rho v)=\partial_{\theta_i}\rho \right\},\quad i=1,...,n.
\end{align}
\end{proposition}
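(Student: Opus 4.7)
The plan is to establish the two identities separately, using the continuity equation as the bridge between the ordinary $L^2$ calculus of variations and the Wasserstein (Otto) calculus on $\mathcal{M}=\mathcal{P}_{2,ac}(\R^d)$. Throughout, I would rely on the structural definitions already given: the tangent space $T_\rho\mathcal{M}$ as in~\eqref{eq:tangent_W2}, the inner product in~\eqref{eq:inner_W2}, and the defining property of a Riemannian gradient that $\frac{d}{dt}f(\rho(t))\big|_{t=0}=\langle \partial^W_\rho f,\dot\rho\rangle_{g(\rho)}$ for any smooth curve $t\mapsto\rho(t)\in\mathcal{M}$ with $\rho(0)=\rho$, whose Wasserstein tangent vector $\dot\rho\in T_\rho\mathcal{M}$ is tied to the ordinary time derivative by the continuity equation $\partial_t\rho+\nabla\cdot(\rho\dot\rho)=0$.

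For the identity~\eqref{Wf}, I would compute $\frac{d}{dt}f(\rho(t))\big|_{t=0}$ in two ways and equate them. By definition of the $L^2$ variational derivative,
\[
\frac{d}{dt}f(\rho(t))\bigg|_{t=0} = \int_{\R^d} \partial_\rho f(x)\,\partial_t\rho(0,x)\,dx.
\]
Substituting the continuity equation and integrating by parts (using the decay of $\partial_\rho f$ at infinity or a cutoff argument) yields
\[
\int_{\R^d} \partial_\rho f\,\partial_t\rho\,dx = -\int_{\R^d}\partial_\rho f\,\nabla\cdot(\rho\dot\rho)\,dx = \int_{\R^d}\nabla\partial_\rho f\cdot\dot\rho\,\rho\,dx = \langle \nabla\partial_\rho f,\dot\rho\rangle_{g(\rho)}.
\]
Comparing with the Riemannian-gradient identity and using that $\dot\rho$ ranges over a dense subset of $T_\rho\mathcal{M}$ forces $\partial^W_\rho f=\nabla\partial_\rho f$.

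For the identity~\eqref{min}, I would start from the fact that differentiating $\int \rho(\theta,x)\varphi(x)\,dx$ in $\theta_i$ against test functions $\varphi$ shows that $\partial^W_{\theta_i}\rho\in T_\rho\mathcal{M}$ must satisfy the continuity equation $-\nabla\cdot(\rho\,\partial^W_{\theta_i}\rho)=\partial_{\theta_i}\rho$. Given any competitor $v\in L^2_\rho(\R^d;\R^d)$ solving the same constraint, the difference $w:=v-\partial^W_{\theta_i}\rho$ satisfies $\nabla\cdot(\rho w)=0$ in the weak sense. The key orthogonality observation is that for any $\phi\in C_c^\infty(\R^d)$,
\[
\int_{\R^d}\nabla\phi\cdot w\,\rho\,dx=-\int_{\R^d}\phi\,\nabla\cdot(\rho w)\,dx=0,
\]
and by extending to the $L^2_\rho$-closure this gives $\langle \partial^W_{\theta_i}\rho,w\rangle_{g(\rho)}=0$. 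Hence $\|v\|^2_{L^2_\rho}=\|\partial^W_{\theta_i}\rho\|^2_{L^2_\rho}+\|w\|^2_{L^2_\rho}$, so $\partial^W_{\theta_i}\rho$ is the unique minimizer in~\eqref{min}; equivalently, it is the orthogonal projection in $L^2_\rho$ of any feasible $v$ onto the closed subspace of gradient fields.

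The main technical obstacle is justifying the integration by parts and the weak formulation of the continuity equation under the minimal regularity typically available for $\rho(\theta,\cdot)$ and for elements of $T_\rho\mathcal{M}$ (which are only $L^2_\rho$-limits of smooth gradients). For smooth, compactly supported densities tested against $C_c^\infty$ functions the calculations are entirely routine; in the general case one needs a density/approximation argument together with a duality interpretation of $\nabla\cdot(\rho\,\cdot)$, but these steps are by now standard in Otto calculus (see~\cite{AMS08}) and are not specific to the present setting. The conceptual heart of the proof is just the two-line identification above, once the continuity equation is in hand.
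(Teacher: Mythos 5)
Your argument is correct: both halves are the standard Otto-calculus derivation (chain rule plus integration by parts against the continuity equation for \eqref{Wf}, and the Pythagorean/orthogonal-projection argument onto the closure of gradient fields for \eqref{min}), with the regularity caveats appropriately flagged. Note that the paper itself supplies no proof of this statement---it is quoted verbatim as Proposition 2.2 of \cite{NLY22}---and your reasoning reproduces the same route taken in that cited source, so there is nothing substantive to contrast.
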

An interesting fact is that the minimization problem (\ref{min}) can be characterized by using a potential function~\cite[Sections 8.1.2 and 8.2]{villani2003topics}, \cite[Section 4]{mallasto2019formalization}, \cite[Section 3]{chen2020optimal}, \cite[Section 2]{LM18}.  
\begin{lemma}
One has that
\begin{align}\label{minv}
&\min_v\{\|v\|^2_{L^2_{\rho}(\R^d;\R^d)}\;|\;-\nabla \cdot(\rho v)=\partial_{\theta_i}\rho\}\\\label{minphi}
= &\min_\phi \left\{\|\nabla\phi(x)\|^2_{L^2_{\rho}(\R^d;\R^d)}\;|\; \int_{\R^d} \phi(x)dx = 0, -\nabla \cdot(\rho \nabla\phi(x))=\partial_{\theta_i}\rho \right\}.
\end{align} 
This minimization problem thus admits the following solution:
\begin{equation}   
\label{Wrho}
\partial^W_{\theta_i}\rho = (\nabla (-\Delta_\rho)^{-1})\partial_{\theta_i}\rho,\quad i=1,...,n.    
\end{equation}
\end{lemma}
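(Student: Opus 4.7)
The plan is to establish equality~\eqref{minv}=\eqref{minphi} by a weighted Helmholtz-type decomposition argument, and then solve the reduced scalar problem via its Euler--Lagrange equation to recover~\eqref{Wrho}. The key observation is that the constraint $-\nabla\cdot(\rho v)=\partial_{\theta_i}\rho$ is affine in $v$, so the admissible set is $v_0+\mathcal{K}$, where $v_0$ is any particular solution and $\mathcal{K}=\{w\in L^2_\rho(\R^d;\R^d)\;|\;\nabla\cdot(\rho w)=0\}$ is the closed linear subspace of $\rho$-divergence-free fields. Minimizing an $L^2_\rho$-norm on an affine subspace is a standard projection problem, so the minimizer is the element of the admissible set that is $L^2_\rho$-orthogonal to $\mathcal{K}$.

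First, I would show that $\mathcal{K}^{\perp}$ in $L^2_\rho(\R^d;\R^d)$ coincides with the closure of $\{\nabla\phi\;|\;\phi\in C^\infty_c(\R^d)\}$, i.e., with $T_\rho\M$ as defined in~\eqref{eq:tangent_W2}. The inclusion ``gradients $\subset \mathcal{K}^\perp$'' follows from integration by parts: for any $w\in\mathcal{K}$ and smooth $\phi$,
\begin{equation*}
\int_{\R^d}\nabla\phi\cdot w\,\rho\,dx=-\int_{\R^d}\phi\,\nabla\cdot(\rho w)\,dx=0.
\end{equation*}
The reverse inclusion uses a duality/density argument (e.g., a de Rham-type lemma in the weighted setting). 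Consequently, writing any admissible $v=\nabla\phi+w$ with $w\in\mathcal{K}$, Pythagoras yields $\|v\|_{L^2_\rho}^2=\|\nabla\phi\|_{L^2_\rho}^2+\|w\|_{L^2_\rho}^2$, so the infimum is attained precisely at $v=\nabla\phi$. This proves the equality~\eqref{minv}=\eqref{minphi}; the normalization $\int\phi\,dx=0$ is just a gauge choice eliminating the additive constant ambiguity of $\phi$ (the constraint on $\phi$ is invariant under $\phi\mapsto\phi+\mathrm{const}$).

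Next, I would solve the minimization over $\phi$. Writing the Lagrangian with multiplier $\psi$ for the PDE constraint and differentiating in $\phi$, or directly plugging the constraint back in, the unique (up to additive constants) minimizer satisfies the weighted elliptic equation
\begin{equation*}
-\nabla\cdot(\rho\nabla\phi)=\partial_{\theta_i}\rho,
\end{equation*}
which by definition of the weighted Laplacian $\Delta_\rho u:=\nabla\cdot(\rho\nabla u)$ means $\phi=(-\Delta_\rho)^{-1}\partial_{\theta_i}\rho$. Hence $\partial^W_{\theta_i}\rho=\nabla\phi=\nabla(-\Delta_\rho)^{-1}\partial_{\theta_i}\rho$, giving~\eqref{Wrho}.

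The main obstacle I anticipate is rigorously justifying the weighted Helmholtz decomposition and the solvability of $-\nabla\cdot(\rho\nabla\phi)=\partial_{\theta_i}\rho$: one needs $\rho$ to be sufficiently regular and positive (or at least non-degenerate on an appropriate domain) so that the bilinear form $(\phi,\psi)\mapsto\int\nabla\phi\cdot\nabla\psi\,\rho\,dx$ is coercive on a suitable quotient Hilbert space (e.g., $\dot H^1_\rho$ modulo constants), and $\partial_{\theta_i}\rho$ must satisfy the compatibility condition $\int\partial_{\theta_i}\rho\,dx=0$, which is automatic since $\int\rho\,dx=1$ for all $\theta$. Modulo these functional-analytic prerequisites (which are standard in the optimal transport literature and invoked in the references cited right before the lemma), the argument reduces to the orthogonal-projection calculation sketched above.
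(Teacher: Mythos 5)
Your argument is correct, and it reaches the same conclusion as the paper but by a different route. The paper's proof notes that ``$\min_v \le \min_\phi$'' is immediate (gradient fields form a subset of the admissible $v$'s) and establishes the reverse inequality directly: given an admissible $v$, set $m=\rho v$, solve $-\nabla\cdot(\rho\nabla\phi)=-\nabla\cdot m$ with $\int\phi\,dx=0$, and use the pointwise convex-duality identity $\frac{|m|^2}{\rho}=\sup_p\bigl(2m\cdot p-\rho|p|^2\bigr)$ with the test choice $p=\nabla\phi$; two integrations by parts then turn the resulting lower bound into exactly $\|\nabla\phi\|^2_{L^2_\rho}$. You instead frame the problem as orthogonal projection onto the affine set $v_0+\mathcal{K}$ and invoke a weighted Helmholtz decomposition, concluding by Pythagoras. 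The two arguments are morally the same --- the paper's duality step is precisely ``completing the square'' in $\|v-\nabla\phi\|^2_{L^2_\rho}\ge 0$ with the cross term computed by integration by parts --- but the paper's version is more elementary and self-contained, since it never needs to characterize $\mathcal{K}^\perp$ or discuss closures, while yours makes the attainment and uniqueness structure (the minimizer is the unique admissible element of $T_\rho\mathcal{M}$) more transparent. One small simplification available to you: since $\mathcal{K}$ is \emph{defined} by testing $\nabla\cdot(\rho w)=0$ against $\phi\in C^\infty_c(\R^d)$, it is literally the orthogonal complement of $\{\nabla\phi:\phi\in C^\infty_c\}$ in $L^2_\rho$, so $\mathcal{K}^\perp=\overline{\{\nabla\phi\}}$ follows from the biorthogonality of complements in a Hilbert space and no de Rham-type lemma is needed. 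Both proofs share the same unaddressed functional-analytic prerequisite, namely solvability of $-\nabla\cdot(\rho\nabla\phi)=\partial_{\theta_i}\rho$, which you correctly flag.
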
 

\begin{remark}
Note that (\ref{Wrho}) is well-defined at the continuous level, while for computational efficiency, we still use (\ref{min}).    
\end{remark}

By Proposition \ref{pps}, given the $L^2$ tangent vectors $\{\partial_{\theta_1}\rho, \cdots, \partial_{\theta_n}\rho\}$ and gradient $\partial_\rho f$, the Wasserstein natural gradient can be calculated in two steps:
\begin{enumerate}
\item Compute $\tilde v_i =\sqrt{\rho} \partial^W_{\theta_i}\rho$ for $i=1,...,n$ by
\begin{equation}\label{rhov}
\tilde v_i = \argmin_{\tilde v}\{\|\tilde v\|^2_{L^2(\R^d;\R^d)}\;|\; \mathbf{M}\tilde v=\partial_{\theta_i}\rho\},\quad\text{where}\quad \mathbf{M}\tilde v=-\nabla \cdot(\sqrt{\rho}\tilde  v).
\end{equation}
Here $\tilde v_i$ 
is uniquely defined by $\partial_{\theta_i}\rho$, denoted by $\mathbf{M}^\dagger (\partial_{\theta_i}\rho) $. 
\item Compute the Wasserstein natural gradient by
\begin{equation}\label{rhof}
p^W=\argmin_{p \in \R^n} \bigg\|\sqrt{\rho}\nabla \partial_\rho f +\sum_{i=1}^n p_i \tilde v_i \bigg\|^2_{L^2(\R^d;\R^d)}.
\end{equation}
\end{enumerate}

Upon further spatial discretization, $p^W$ can be conveniently used for updating $\theta_k$ in AEPG algorithm stated below. For further details about the spatial discretization, we refer to \cite{NLY22}.

\begin{algorithm}
\caption{AEPG for solving the problem (\ref{prho+})  
}
\label{alg3}
\begin{algorithmic}[1] 
\Require  $\rho(\theta)$, $f$ a loss function; $c$: a constant such that $L({\theta})+c>0$, where $L(\theta)=f(\rho(\theta))$; 
$\eta$: base step size; and 
$T$: the total number of iterations.
\Require ${\theta}_0$: initial guess of $\theta$; ${r}_{0}=l(\theta_0)=\sqrt{L({\theta}_0)+c}$: initial energy.
\For{$k=0$ to $T-1$}
\State compute $p^W_k$ via~\eqref{rhov} and~\eqref{rhof} (update natural gradient)
\State $v_k = -p^W_k/2l(\theta_k)$
\State $r_{k+1} = r_{k}/(1+2\eta \|v_k\|^2)\quad$ (update energy)
\State ${\theta}_{k+1} = {\theta}_{k} - 2\eta r_{k+1}v_k\quad$ (update parameters)
\EndFor
\State \textbf{return} ${\theta}_T$
\end{algorithmic}
\end{algorithm}

%
\section{Numerical examples}\label{numeric}

This section presents a series of optimization examples to illustrate \footnote{The code is available at \url{https://github.com/txping/AEPG}.}
\begin{enumerate}
\item The advantages of natural gradient over the standard gradient.
\item The enhanced convergence of AEPG over HRGD (\ref{eq:pre-gd}) with $T_k$ given by (\ref{H})) and WNGD ((\ref{eq:pre-gd}) with $T_k=G(\theta_k)^{-1}$,  where $G(\theta)$ is the information matrix given by (\ref{eq:G})), particularly in addressing ill-conditioned or nonconvex problems.
\end{enumerate}

In Subsection \ref{hes}, we first present benchmark convex and nonconvex constrained optimization problems in the form of \eqref{mp}. These problems are solved by HRGD by constructing a Hessian matrix $\nabla^2 h$ dictated by the form of constraints and then applying the AEPG method. We show the advantage of AEPG over HRGD,  
especially in handling ill-conditioned or nonconvex problems. Furthermore, we apply AEPG to address the D-optimal design problem,  showcasing that with the preconditioning matrix identified by the Hessian--Riemannian metric, AEPG exhibits advantages in both efficiency and accuracy.

In Subsection~\eqref{ex-wng}, we delve into an optimization problem on the Wasserstein Riemannian manifold presented in the form of~\eqref{eq:L_composite}. We employ the least-squares formulation (\ref{rhov}) and (\ref{rhof}) to efficiently compute the Wasserstein natural gradient. 
Our results indicate that methods utilizing the standard gradient (GD and AEGD) may get stuck at a local minimum, whereas methods employing the Wasserstein natural gradient (WNGD and AEPG) reliably converge to the global minimum.

Throughout all experiments, we fine-tune the step size of each method to ensure they solve the problem with the minimum number of iterations or the least computational time. 

\subsection{Hessian-Riemannian method}\label{hes}
In the first two examples, we assess the performance of HRGD and AEPG on functions with varying condition numbers (specifically, the condition number of $\nabla^2 f$). More precisely, we set the stopping criterion as $|f(x)-f^*|<\epsilon$ and compare the number of iterations each algorithm takes to achieve the specified accuracy. Additionally, we calculate the ratio of HRGD iterations to AEPG iterations. The summarized results are presented in Tables \ref{tb:quad} and \ref{tb:rosen}. Both sets of results indicate that AEPG significantly enhances the convergence of HRGD, particularly in the context of ill-conditioned and nonconvex problems.

\begin{figure}[ht]
\begin{subfigure}[b]{0.5\linewidth}
\centering
\includegraphics[width=1\linewidth]{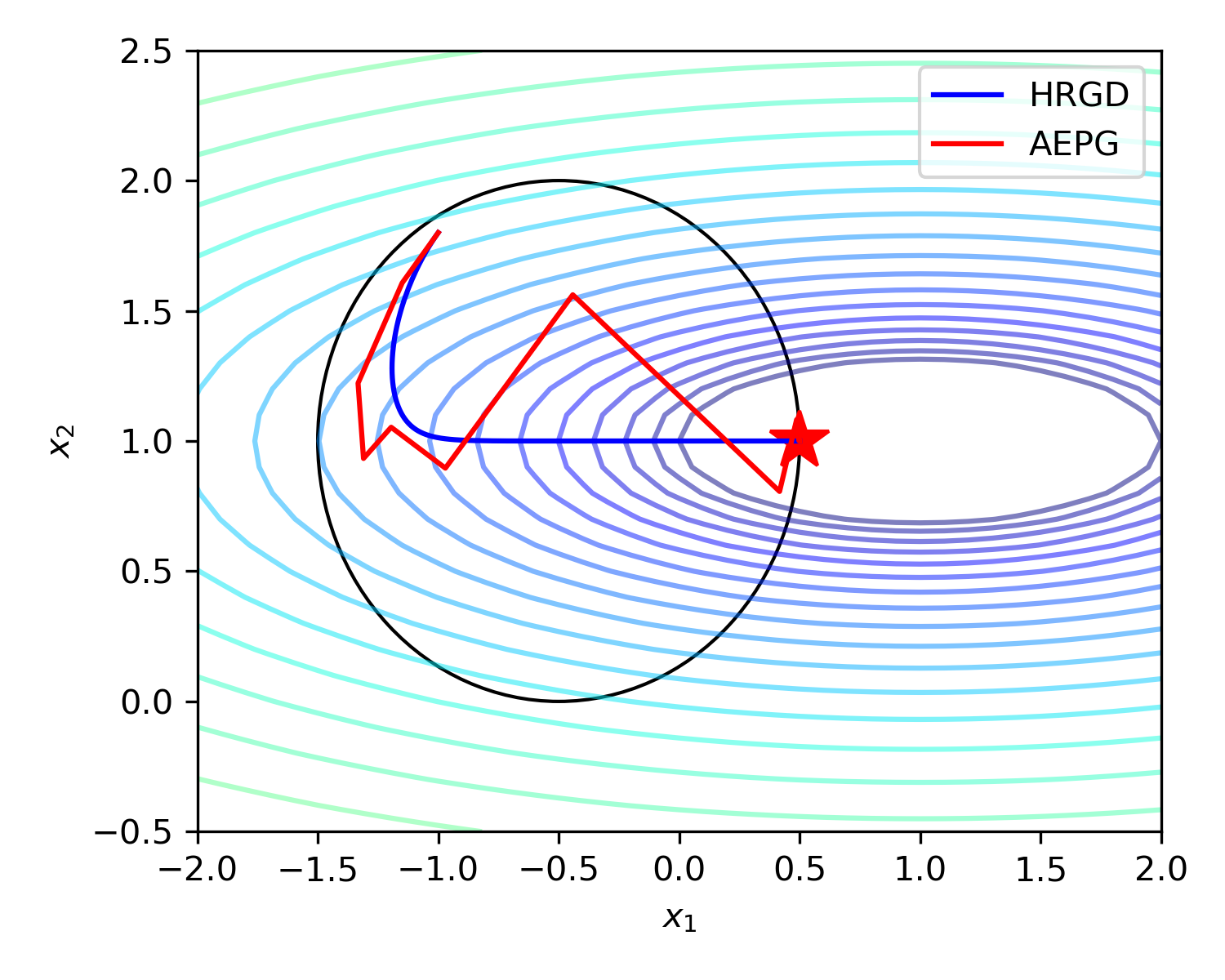}
\caption{Quadratic}
\end{subfigure}%
\begin{subfigure}[b]{0.5\linewidth}
\centering
\includegraphics[width=1\linewidth]{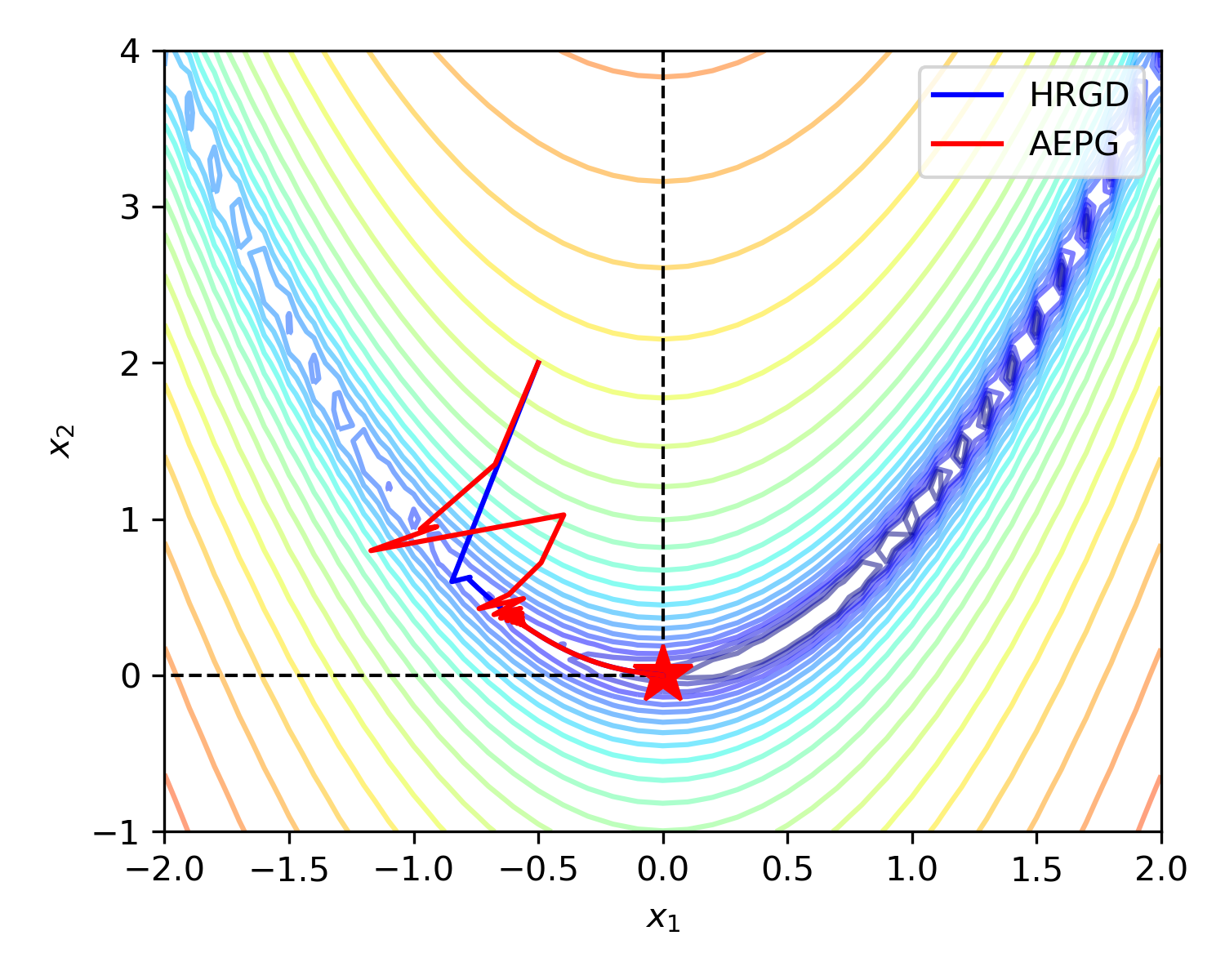}
\caption{Rosenbrock}
\end{subfigure}%
\caption{Contour plot and trajectories of AEPG and HRGD on two constrained optimization problems: the quadrative problem with $\alpha=10$ (a), and the Rosenbrock problem with $\alpha=100$ (b). In each plot, the red star represents the minimum point.}
\label{fig:hes}
\end{figure}

\subsubsection{Convex objectives}
We begin with the constrained quadratic problem:
\begin{align*}
\min\quad &f(x_1,x_2)=(x_1-1)^2+\alpha(x_2-1)^2,\\
\text{s.t.}\quad &(x_1+0.5)^2+(x_2-1)^2\leq1,
\end{align*}
where $\alpha$ is a positive constant. With the provided constraint, the minimum value of $f$ is $0.25$ achieved at $(0.5, 1)$. In our experiments, we vary the value of $\alpha$, which corresponds to the condition number of $\nabla^2 f$, and solve the problems using  HRGD (\ref{eq:pre-gd}) and AEPG (Algorithm \ref{alg2}) with $T_k=\nabla^2 h(x_k)^{-1}$. Here, we set 
\[
h(x_1, x_2)=u\ln u-u, \quad u:=u(x_1, x_2)=1-(x_1+0.5)^2-(x_2-1)^2.
\]
The initial point is set at $(-1, 1.8)$. The results are presented in Table \ref{tb:quad}, and the trajectories of the two methods are visualized in Figure \ref{fig:hes} (a).

\begin{table}[ht]
\caption{Number of iterations and computational time (in seconds) required by HRGD and AEPG to achieve $\epsilon$ accuracy on the constrained quadratic problem. Here $\alpha$ controls the condition number of $\nabla^2 f$; the stopping criteria is $|f(x)-f^*|<\epsilon$; Ratio = number of iterations (computational time) of HRGD / number of iterations (computational time) of AEPG.} 
\centering 
\begin{tabular}{l c rrr rrr} 
\toprule
\multirow{2}{*}{\bf $\alpha$} & 
\multirow{2}{*}{\bf $\epsilon$} &  
\multicolumn{3}{c}{\bf Number of iterations} &
\multicolumn{3}{c}{\bf Computational time (s)}\\ \cmidrule(lr){3-5} \cmidrule(lr){6-8}
&& HRGD &  AEPG  & Ratio & HRGD &  AEPG  & Ratio \\ \cmidrule(lr){1-8}
1      &  1e-7  &  416      & 103    & 4   & 0.0234  & 0.0075 & 3 \\
10     &  1e-6  &  3175     & 47     & 67  & 0.1592  & 0.0046 & 35 \\
100    &  1e-5  &  23120    & 723    & 32  & 1.1257  & 0.0533 & 21 \\
1000   &  1e-4  &  14190    & 1715   & 8   & 2.6121  & 0.1349 & 19 \\
10000  &  1e-3  &  147284   & 5075   & 29  & 14.9539 & 0.2805 & 53 \\
\bottomrule
\end{tabular}
\label{tb:quad} 
\end{table}


\subsubsection{Nonconvex objectives}
We then consider the benchmark 2D-Rosenbrock function of the form 
$$
f(x_1, x_2) = (x_1-1)^2+\alpha(x_2-x_1^2)^2, 
$$
where $\alpha$ is a positive constant. This serves as a standard test case for optimization algorithms. The global minimum $(1, 1)$ lies within a long, narrow, parabolic flat valley. Finding the valley is simple, but pinpointing the actual minimum of the function proves less trivial. In this example, we examine the constrained problem:
\begin{align*}
\min\quad  f(x_1, x_2) = (x_1-1)^2+\alpha(x_2-x_1^2)^2,\quad \text{s.t. }\quad  x_1<0, \; x_2>0.
\end{align*}
Under the given constraint, the minimum value of $f$ is $1$ at $(0,0)$. We vary the value of $\alpha$, which corresponds to the condition number of $\nabla^2 f$, and solve the problem using HRGD and AEPG (Algorithm \ref{alg2}) with $T_k=\nabla^2 h(x_k)^{-1}$. Here, we use
\[
h(x_1, x_2)=K(-x_1)+K(x_2),\quad K(s)=s\ln s-s,
\]
leading to $\nabla^2 h(x)^{-1}=\text{diag}(-x_1, x_2)$. The initial point is set at $(-0.5,2)$. The results are presented in Table \ref{tb:rosen}. Numerically, we observe that with a suitably larger step size, AEPG still converges to the minimum point, 
as shown in Figure \ref{fig:hes} (b).  

\begin{table}[ht]
\caption{Number of iterations and computational time (in seconds) required by HRGD and AEPG to achieve $\epsilon$ accuracy on the constrained Rosenbrock problem. Here $\alpha$ controls the condition number of $\nabla^2 f$; the stopping criteria is $|f(x)-f^*|<\epsilon$; Ratio = number of iterations (computational time) of HRGD / number of iterations (computational time) of AEPG.} 
\centering 
\begin{tabular}{l c rrr rrr} 
\toprule
\multirow{2}{*}{\bf $\alpha$} & 
\multirow{2}{*}{\bf $\epsilon$} &  
\multicolumn{3}{c}{\bf Number of iterations} &
\multicolumn{3}{c}{\bf Computational time (s)}\\ \cmidrule(lr){3-5} \cmidrule(lr){6-8}
&& HRGD &  AEPG  & Ratio & HRGD &  AEPG  & Ratio \\ \cmidrule(lr){1-8}
1      &  1e-7  &  7896     & 4802    & 2  & 0.2913 & 0.2257 & 1 \\
10     &  1e-6  &  7935     & 1956    & 4  & 0.2604 & 0.1294 & 2 \\
100    &  1e-5  &  8712     & 689     & 12  & 0.3606 & 0.0572 & 6 \\
1000   &  1e-4  &  28705    & 1327    & 21 & 0.8766 & 0.0843 & 10 \\
10000  &  1e-3  &  226524   & 2813    & 80 & 6.6848 & 0.1436 & 46 \\
\bottomrule
\end{tabular}
\label{tb:rosen} 
\end{table}

\subsubsection{$D$-Optimal Design Problem}  


Consider the problem of estimating a vector $x\in\R^m$ from measurements $y\in\R^n$ given by the relationship  
$$
y=Ux+\delta, \quad \delta \sim \mathcal{ N}(0, 1),
$$
where $U=[u_1,\cdots,u_n]^\top$ is a matrix that contains $n$ test (column) vectors $u_i\in\R^m$. During the experiment design phase, a reasonable goal is to minimize the covariance matrix, which is proportional to
$(U^\top U)^{-1}$. Using the D-optimality criteria, the problem is formulated as a minimum determinant problem \cite{VBW98}:
\begin{equation}\label{D}
\begin{aligned}
\min_{\theta}\quad & L(\theta):=\log \det \left(\sum_{i=1}^{n}\theta_iu_iu_i^{\top}\right)^{-1}, \\
\text{s.t.} \quad & \sum_{i=1}^{n}\theta_i=1\quad \text{and}\quad \theta_i\geq0,\quad i\in[n].
\end{aligned}
\end{equation}
This is a convex problem with the unit simplex as the feasible region. In computational geometry, the $D$-optimal design problem arises as a dual problem of the minimum volume covering ellipsoid (MVCE) problem and finds applications in  computational statistics and data mining \cite{VBW98}.

To apply AEPG (Algorithm \ref{alg2}) to solve \eqref{D}, we define the Hessian matrix by $\nabla^2 h(\theta_k)$, with 
\[
h(\theta)=\sum_{i=1}^n K(\theta_i),\quad K(s)=s\ln s-s.
\]
From this, we have $\nabla^2 h(\theta)^{-1}={\rm diag}(\theta_1,...,\theta_n)$. Note that $B=[1,1,...,1]$, resulting in the preconditioning matrix, as defined by \eqref{H}, taking the form: 
\begin{equation}\label{TD}
\begin{aligned}
T_k&=
\nabla^2 h(\theta_k)^{-1} - \nabla^2 h(\theta_k)^{-1}B^\top (B \nabla^2 h(\theta_k)^{-1}B^\top)^{-1}B 
\nabla^2 h(\theta_k)^{-1} \\
&={\rm diag}(\theta_{k,1},\cdots,\theta_{k,n}) -\theta_k\theta_k^\top, 
\end{aligned}
\end{equation}
where $B \nabla^2 h(\theta_k)^{-1}B^\top=\sum_{i=1}^{n}\theta_{k,i}=1$ is used.
Notably, from the structure of the preconditioning matrix in (\ref{TD}), it is evident that the computation of AEPG is independent of $m$ (the dimension of the test vectors $u_i$). Hence, AEPG is well-suited for solving D-optimal design problems constructed with high-dimensional test vectors. 

Several alternative algorithms have been proposed for solving \eqref{D}, such as  the interior point method and the Frank--Wolfe (FW) method \cite{AS+08}. While the interior point method requires the second-order derivative of $f$, the FW method is a first-order gradient method. To make the comparison more convincing, we also consider FW with away steps (FW-away), an effective strategy that enhances the vanilla FW algorithm's convergence speed and solution accuracy. Further details on applying FW and FW-away to solve the D-optimal design problem can be found in  \cite[Chapter 5.2.7]{BC+22}. 

In our experiments, we maintain $n=1000$ (number of test vectors) and compare AEPG (Algorithm \ref{alg2}) with other algorithms for various values of $m$:  $10, 30, 50, 80, 100, 200, 300, 400, 500$. We generate test vectors $u_i$ using independent random Gaussian distributions with zero mean and unit variance. The initial point is set as $\theta_0=(\frac{1}{n},...,\frac{1}{n})$. All computations are performed using  Python 3.7 on a 2 GHz PC with 16 GB Memory.

\textbf{Comparison with the interior point method (IPM).}
For cases where $m=10, 30, 50, 80, 100$, we conduct a comparison analysis between  AEPG with IPM \footnote{The interior point method is applied through the Python package PICOS \cite{SS22}. 
}. Specifically, we establish the stopping criteria as $|L(\theta)-L^*|<10^{-7}$ and compare the computation time required for both methods to meet the stopping criteria. The summarized results in Table~\ref{tb2} reveal that the computation time of AEPG is significantly less than that of IPM, especially for relatively large $m$ ($m>10$). 

\begin{table}[ht]
\caption{Comparison of computational time (in seconds) between AEPG and IPM for the D-optimal design problem. The datasets have varying dimensions of test vectors $m$ with a fixed number of test vectors $n=1000$. Ratio = computational time of IPM / computational time of AEPG.} 
\centering 
\begin{tabular}{c c r c r} 
\toprule
\multirow{2}{*}{\bf $m$} & 
\multirow{2}{*}{\bf $n$} &  
\multicolumn{3}{c}{\bf Computational time (s)}\\ \cmidrule(lr){3-5} 
&& IPM   & AEPG  & Ratio\\ \cmidrule(lr){1-5}
10   & 1000   & 1.39    &  2.84   & 0.5 \\
30   & 1000   & 6.15    &  2.93   & 2 \\
50   & 1000   & 21.33   &  3.02   & 7 \\
80   & 1000   & 112.92  &  2.63   & 43 \\
100  & 1000   & 253.37  &  2.44   & 104 \\
\bottomrule
\end{tabular}
\label{tb2} 
\end{table}

\textbf{Comparison with the Frank-Wolfe (FW) method.}
For scenarios where $m=200, 300, 400, 500$, 
the IPM failed to solve the problems.
Consequently, we compare AEPG with the FW method and FW-away method. The results are visually represented in Figure \ref{fig:Doptimal}. Across all cases, the vanilla FW algorithm fails to converge  to solutions meeting the stopping criteria, and AEPG consistently requires  less time than FW-away to reach the minimum value, particularly when $m=300, 400, 500$.

\begin{figure}[ht]
\begin{subfigure}[b]{0.25\linewidth}
\centering
\includegraphics[width=1\linewidth]{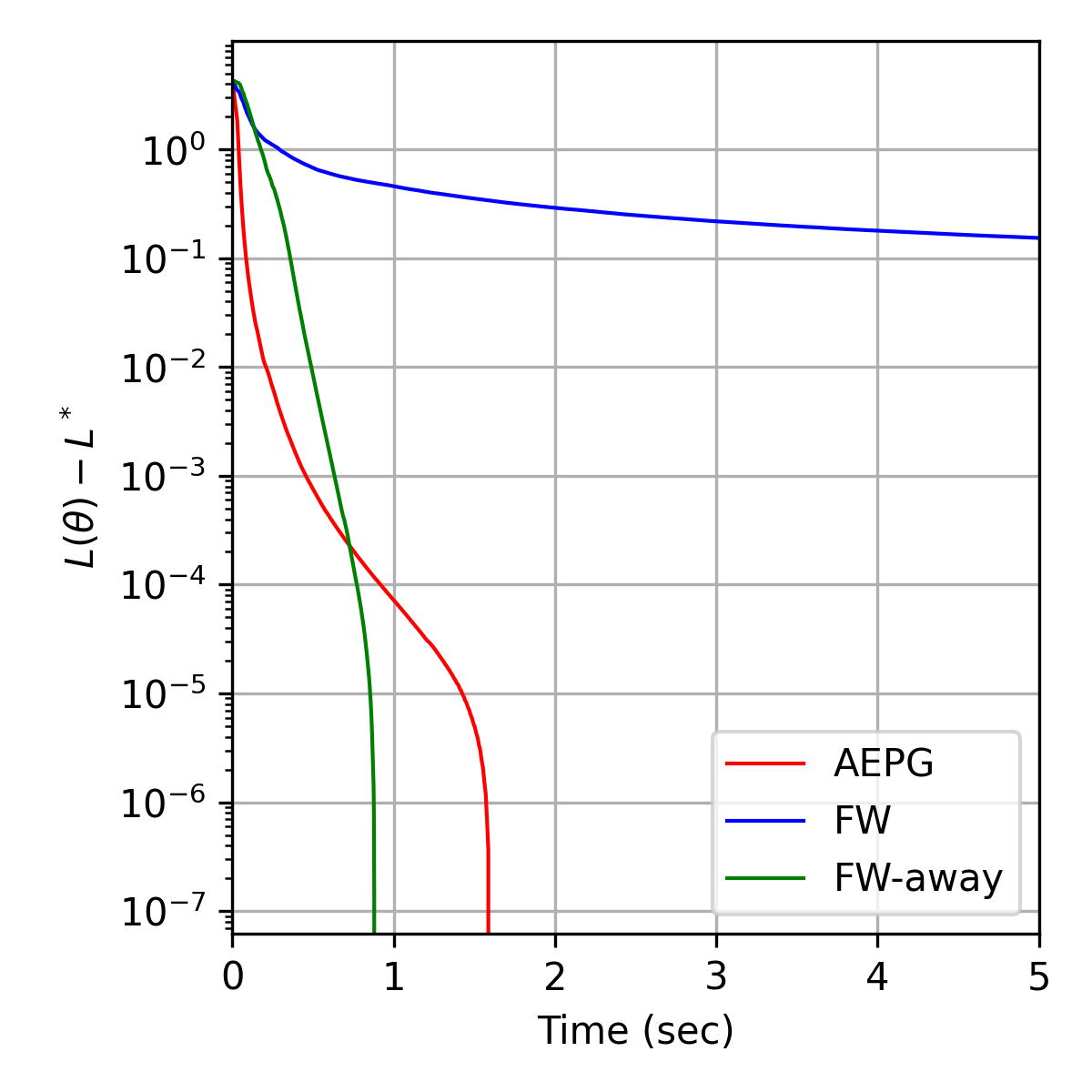}
\caption{$m=200$}
\end{subfigure}%
\begin{subfigure}[b]{0.25\linewidth}
\centering
\includegraphics[width=1\linewidth]{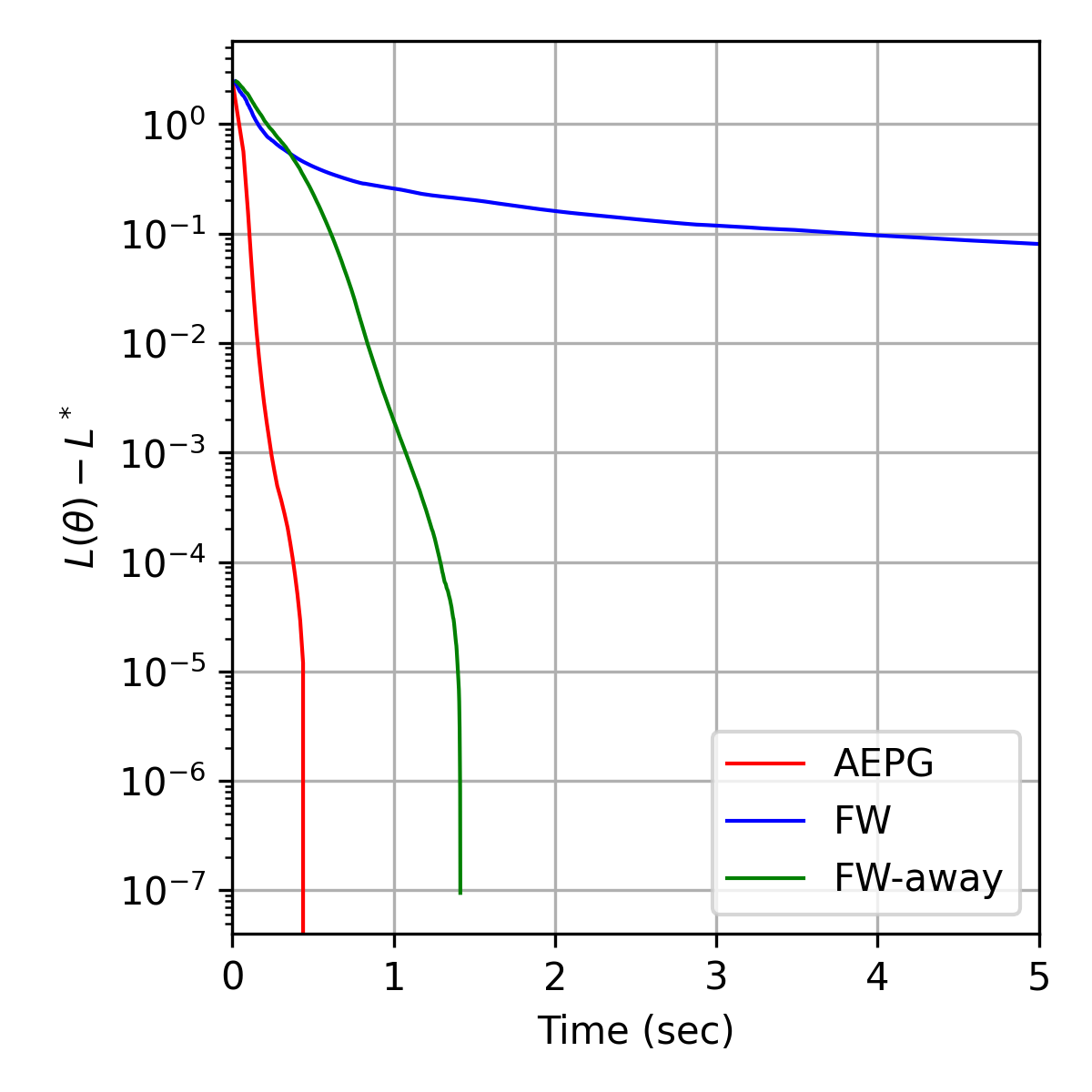}
\caption{$m=300$}
\end{subfigure}%
\begin{subfigure}[b]{0.25\linewidth}
\centering
\includegraphics[width=1\linewidth]{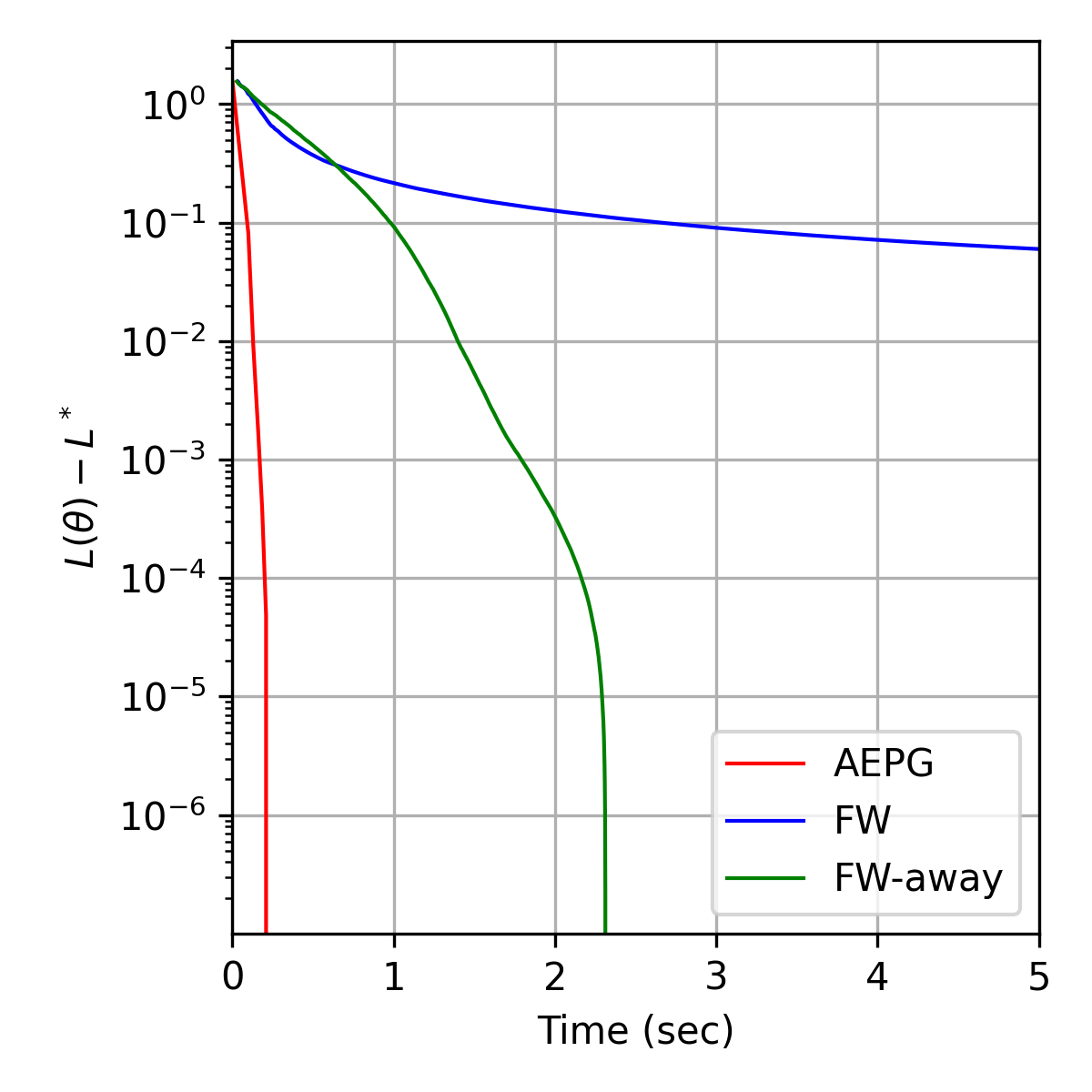}
\caption{$m=400$}
\end{subfigure}%
\begin{subfigure}[b]{0.25\linewidth}
\centering
\includegraphics[width=1\linewidth]{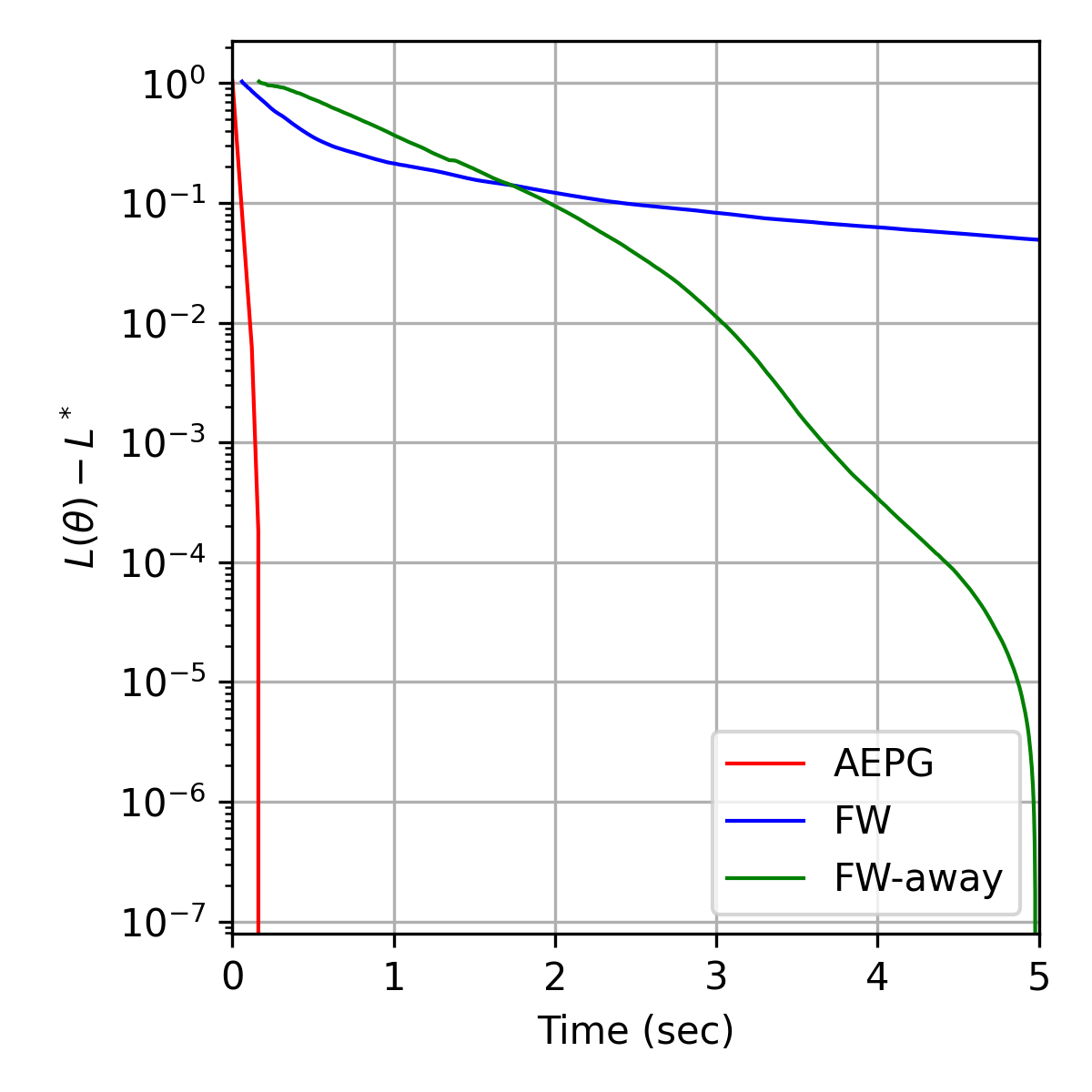}
\caption{$m=500$}
\end{subfigure}%
\caption{Comparison of computational time (in seconds) between AEPG and the FW/FW-away method for the D-optimal design problem. The datasets have varying dimensions of test vectors $m$ and a fixed number of test vectors $n=1000$}
\label{fig:Doptimal}
\end{figure}

In both comparative experiments, we observe that unlike the FW-away algorithm and the IPM, whose computation time increases as $m$ gets larger, the computation time of AEPG remains nearly constant across all cases. The advantage of AEPG over the IPM and the FW-away algorithm becomes increasingly evident as $m$ gets larger.

\subsection{Wasserstein natural gradient}\label{ex-wng}
Consider a 2-dimensional Gaussian mixture model for $\rho$ defined as: 
$$
\rho(x;\theta)=w\mathcal{N}(x;(\theta_1,3),I)+(1-w)\mathcal{N}(x;(\theta_2,2),I),
$$
where $w\geq0$ is a weight factor. In accordance with \cite{NLY22}, we formulate the data fitting problem: 
$$
\min_\theta\bigg\{L(\theta)=\frac{1}{2}\int_{\Omega}|\rho(x;\theta)-\rho^*(x)|^2dx\bigg\},
$$
where $\Omega$ is a compact domain, and $\rho^*(x)$ is the observed reference density function. In our experiments, we set $w=0.05$, $\Omega=[0,5]^2$, and $\rho^*(x)=\rho(x;(1,3))$. The initial point is set at $(4,4.2)$. 

We apply the least-squares formulation (\ref{rhov}) and (\ref{rhof}) to compute the Wasserstein natural gradient and compare the performance of WNGD (\ref{eq:pre-gd}) and AEPG (Algorithm \ref{alg2}) with $T_k=G(\theta_k)^{-1}$, where $G(\theta)$ is the Wasserstein information matrix~\eqref{eq:G}. The trajectories are presented in Figure \ref{fig:rho}(b). Notably, both WNGD and AEPG successfully  locate the global minima, with AEPG converging in fewer iterations. Additionally, we showcase the trajectories of GD and AEGD (using standard gradient) in Figure \ref{fig:rho}(a), observing that both methods with standard gradient get stuck at a local minimum.

\begin{figure}[ht]
\begin{subfigure}[b]{0.5\linewidth}
\centering
\includegraphics[width=1\linewidth]{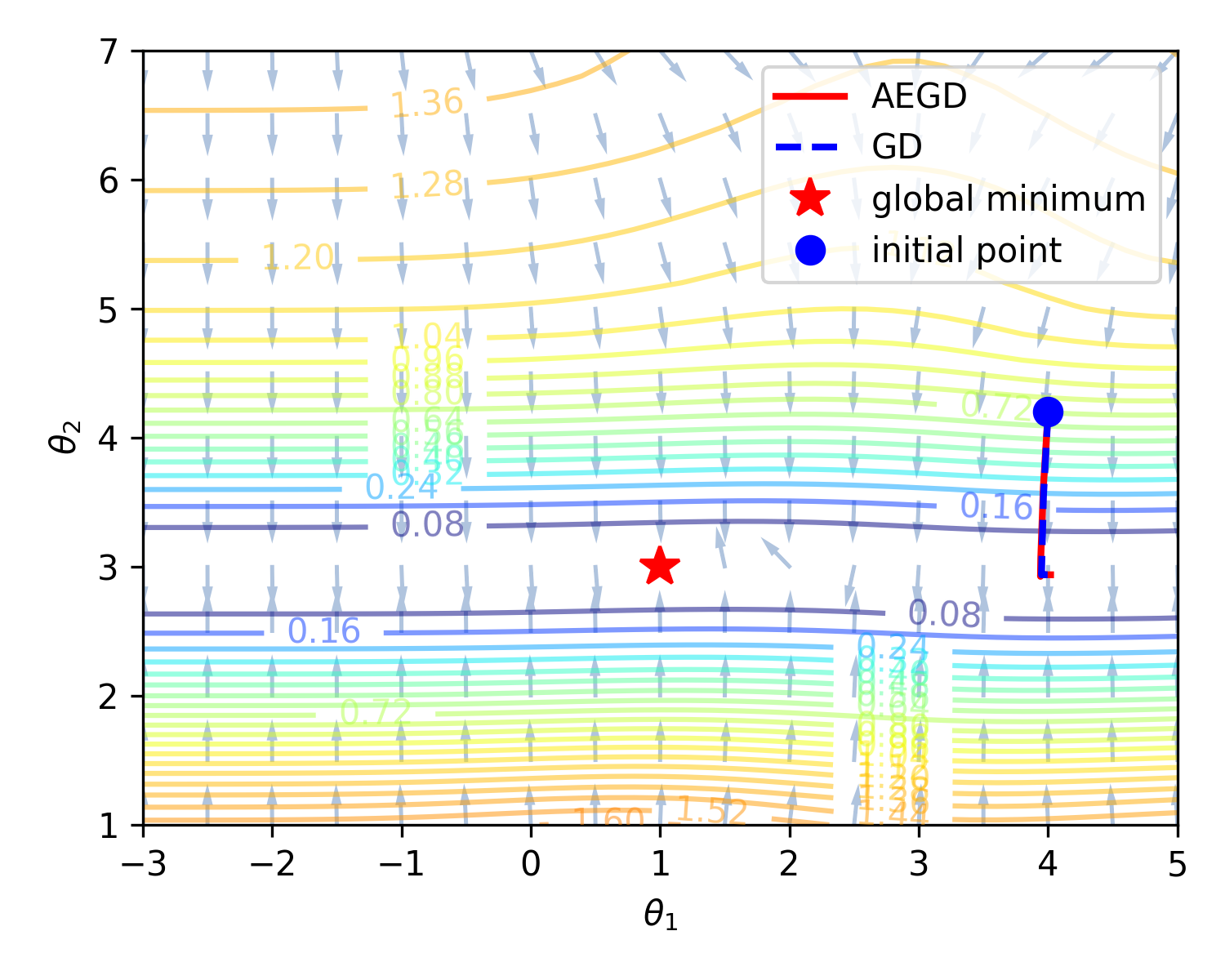}
\caption{Standard gradient}
\end{subfigure}%
\begin{subfigure}[b]{0.5\linewidth}
\centering
\includegraphics[width=1\linewidth]{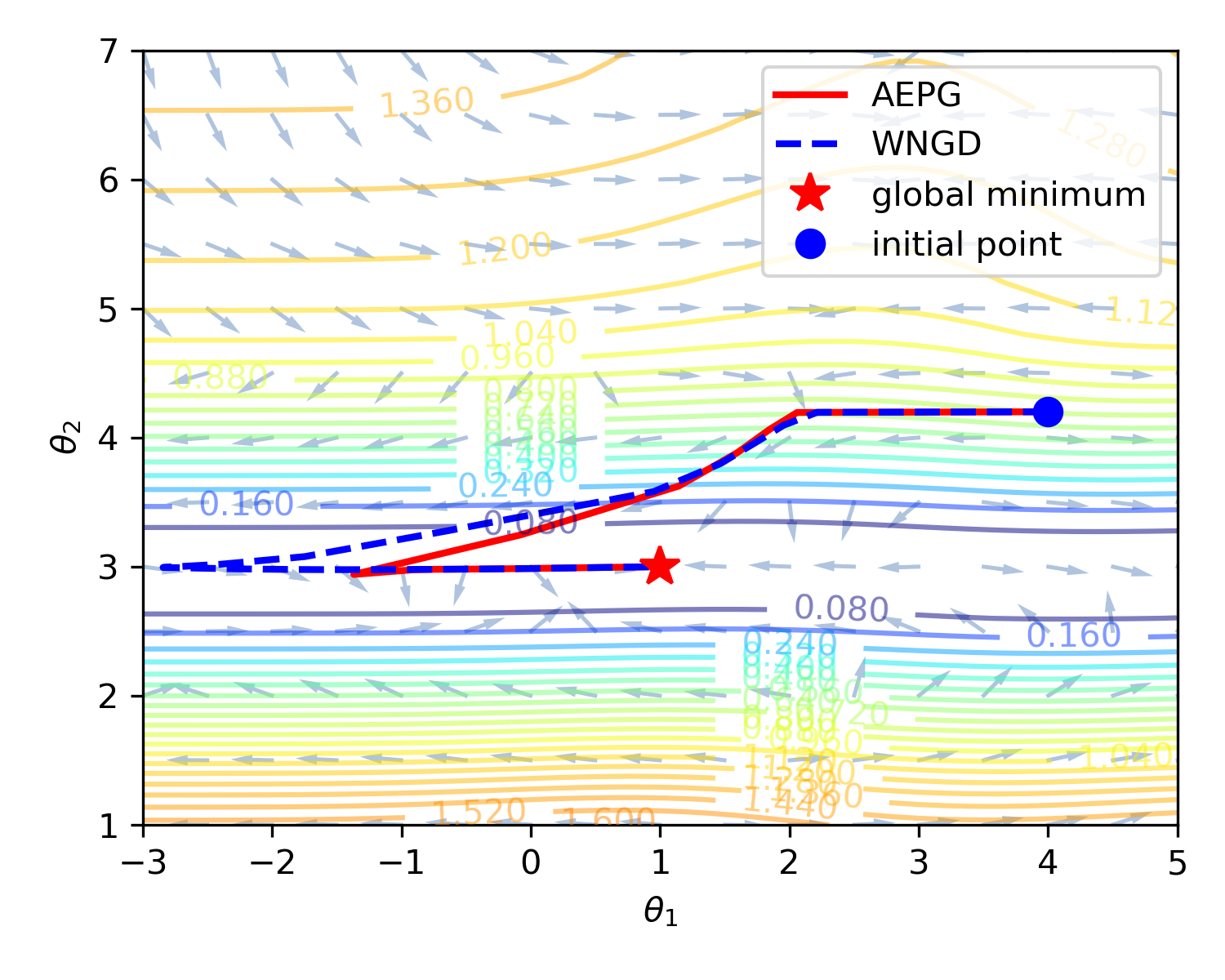}
\caption{Wasserstein natural gradient}
\end{subfigure}%
\caption{Gaussian mixture model: level sets, vector fields, and convergent paths using (a) methods with standard gradient and (b) methods with the Wasserstein natural gradient.}
\label{fig:rho}
\end{figure}
 %
\section{Discussion}\label{discuss}
This research introduces a unified framework for the application of AEGD (Adaptive Gradient Descent with Energy) techniques,  using a general preconditioning descent direction to address a class of constrained optimization problems. The key idea is to incorporate the advantages of adapting the descent direction to the problem's geometry  and adjusting the step size through an energy variable. 

Theoretical insights reveal that AEPG (Adaptive Energy Preconditioned Gradient) is unconditionally energy stable, independent of the step size. Under the condition of a suitably small base step size,  we establish that AEPG is guaranteed to find the minimum value of the objective function.   Convergence rates are derived for three types of objective functions: general differentiable functions, nonconvex functions satisfying Polyak--Lojasiewic's (PL) condition, and convex functions when 
the descent direction is preconditioned by a positive definite matrix against a possible projection matrix. 

We investigate two application scenarios  where optimization problems with explicit or implicit constraints are considered. In one instance,  optimization problems with parameters in a convex set are addressed by endowing the feasible set with a Riemannian metric, following  
the strategy outlined in \cite{ABB04}. Extension to cases with linear equality constraints is achieved by adjusting the preconditioning matrix through a projection operator, with convergence theories established under mild assumptions. Another application pertains to optimization problems over the probability density space with the Wasserstein metric. To efficiently compute the natural gradient $G(\theta)^{-1}\nabla l(\theta)$ in AEPG,   we adopt the strategy proposed in \cite{NLY22},  treating the natural gradient as the solution to a least-squares problem. 

Numerical results show that AEPG outperforms vanilla preconditioned gradient descent algorithms such as HRGD and NGD, particularly on ill-conditioned or nonconvex problems. Notably, these results indicate that the choice of the preconditioning matrix not only impacts the convergence rate but influences the stationary point where the iterates converge within a nonconvex optimization landscape.

Optimal choices for Riemannian metrics remain unclear for manifolds with intricate structures. Good options should probably combine specific problem characterisitcs and applications. Identifying a suitable preconditioning matrix that accelerates convergence without introducing computational overhead is crucial. Moreover, the development of efficient methods for computing the natural gradient in large-scale optimization problems is imperative.


\section*{Declarations} 

\subsection*{Funding}
This work has been partially supported by National Science Foundation (NSF) grants DMS1812666 and DMS2135470 to Hailiang Liu and Xuping Tian; Air Force Office of Scientific Research (AFOSR), Multi-University Research Initiative (MURI) FA 9550 18-1-0502 grant to Levon Nurbekyan; Office of Naval Research (ONR) grant N00014-24-1-2088, and National Science Foundation (NSF) grant DMS-1913129 to Yunan Yang.
 
\subsection*{Data Availability}
The synthetic data supporting Table 3 and Figure 2 are included at \url{https://github.com/txping/AEPG}.

\subsection*{Conflict of interest}
The authors have no relevant financial or non-financial interests to disclose.

 %
\appendix
\renewcommand{\theequation}{\thesection.\arabic{equation}}

\section{Technical proofs}\label{pf}
In this section, we provide proofs for some theoretical results mentioned in  Section \ref{AENG}:
\subsection{Proof of Theorem \ref{cvg}}\label{pf-cvg}
To analyse the convergence behavior of AEPG (\ref{aeng0}), we reformulate it as 
\begin{equation}\label{eq:energy}
 \theta_{k+1}= \theta_k- \eta_k A_k^{-1}\nabla L(\theta_k), \quad \eta_k:=\eta \frac{r_{k+1}}{l(\theta_k)}.
\end{equation} 
Using the $\alpha$-smoothness of $L$, we have 
\begin{equation}\label{lsmooth}
\begin{aligned}
L (\theta_{k+1}) & = L(\theta_k- \eta_k A_k^{-1}\nabla L(\theta_k))\\
& \leq L(\theta_k)-\eta_k \nabla L(\theta_k)^\top A_k^{-1}\nabla L(\theta_k)+\frac{\eta^2_k \alpha}{2}\|A_k^{-1}\nabla L(\theta_k)\|^2 \\
& \leq L(\theta_k)-\eta_k\|\nabla L(\theta_k)\|^2_{A^{-1}_k} + \frac{\eta^2_k \alpha }{2\lambda_1}\|\nabla L(\theta_k)\|^2_{A^{-1}_k}\\
& = L(\theta_k)- \eta_k\left(1 -\frac{\eta_k \alpha}{2\lambda_1} \right)\|\nabla L(\theta_k)\|^2_{A^{-1}_k}. 
\end{aligned}
\end{equation}
This ensures that $L(\theta_k)$ is strictly decreasing as long as $\|\nabla L(\theta_k)\|^2_{A^{-1}_k}\neq 0$, providing that 
\begin{equation}\label{eta_bounds}
0<\eta_k< \frac{2\lambda_1}{\alpha}.    
\end{equation}
Hence we have $L(\theta_k)$ converges as $k\to \infty$. 
We further sum (\ref{lsmooth}) over $k$  to get
$$
\sum_{k=0}^{\infty}\eta_k\left(1 -\frac{\eta_k \alpha}{2\lambda_1} \right)\|\nabla L(\theta_k)\|^2_{A^{-1}_k}\leq \sum_{k=0}^{\infty} \left(L(\theta_k)-L (\theta_{k+1}) \right) \leq L(\theta_0)-\inf L(\theta_k)<\infty.
$$
This implies
\begin{equation}\label{eta-grad}
\lim_{k\to\infty} \eta_k\left(1 -\frac{\eta_k \alpha}{2\lambda_1} \right)\|\nabla L(\theta_k)\|^2_{A^{-1}_k} = 0. 
\end{equation}
With $\eta\leq\eta_s$, Lemma \ref{tau} guarantees that $\eta_k:= \eta \frac{r_{k+1}}{l(\theta_k)}\geq \frac{\eta r^*}{l(\theta_0)}$ when $r_0\geq\frac{l(\theta_0)-l^*}{\lambda_1}$. With $\eta\leq \eta_0=\frac{\lambda_1l^*}{\alpha r_0}$, we have $\eta_k := \eta \frac{r_{k+1}}{l(\theta_k)}\leq \frac{\lambda_1}{\alpha}$. 
These two bounds imply that $\eta_k\left(1 -\frac{\eta_k \alpha}{2\lambda_1} \right)\geq \frac{\eta r^*}{2l(\theta_0)}>0$, which together with (\ref{eta-grad}) ensures that $\|\nabla L(\theta_k)\|^2_{A^{-1}_k}\to 0$, hence $\|\nabla L(\theta_k)\|\to 0$ since $A^{-1}$ is positive definite. Therefore, $L(\theta_k)$ is guaranteed to converge to a local minimum value of $L$.

\subsection{Proof of Theorem \ref{thm2}}\label{pf2}
(ii)  Using the $\alpha$-smoothness assumption and similar derivation as \eqref{lsmooth}, we have
\begin{align}
L(\theta_{k+1}) - L(\theta_k)\notag
&\leq - \eta_k\left(1 -\frac{\eta_k \alpha}{2\lambda_1} \right)\|\nabla L(\theta_k)\|^2_{A^{-1}_k}\\\notag
&\leq -\frac{\eta_k}{2}\|\nabla L(\theta_k)\|^2_{A^{-1}_k}\quad\text{(since $\eta_k\leq\lambda_1/\alpha $)}\\\label{df}
& \leq -\frac{\eta_k}{2\lambda_n}\|\nabla L(\theta_k)\|^2. 
\end{align}    
Denote $w_k=L(\theta_k)-L(\theta^*)$, then the PL property of function $L$ implies
\begin{equation}\label{plw}
\frac{1}{2}\|\nabla L(\theta_k)\|^2\geq\mu(L(\theta_k)-L(\theta^*))=\mu w_k.    
\end{equation}
With this property, \eqref{df} can be written as
\begin{equation}\label{dwk}
w_{k+1}-w_k\leq -\frac{\mu\eta_k}{\lambda_n}w_k\quad\Rightarrow\quad w_{k+1}\leq \Big(1-\frac{\mu\eta_k}{\lambda_n}\Big) w_k.   
\end{equation}
By induction, 
\begin{align*}
w_k &\leq 
w_0\prod_{j=0}^{k-1}\Big(1-\frac{\mu\eta_j}{\lambda_n}\Big)
= w_0\exp\left(\sum_
{j=0}^{k-1} {\rm log}\Big(1-\frac{\mu\eta_j}{\lambda_n}\Big)\right) 
\leq w_0\exp\left(-\mu \sum_
{j=0}^{k-1} \frac{\eta_j}{\lambda_n}\right).
\end{align*}
Note for $j<k$ that 
$\eta_j=\eta \frac{r_{j+1}}{l(\theta_j)}\geq \eta \frac{r_{k}}{l(\theta_0)},
$
hence, 
$$
w_k \leq w_0\exp\Big(-\frac{c_0k r_k}{\lambda_n}\Big), \quad c_0=\frac{\mu \eta }{l(\theta_0)}. 
$$
To ensure convergence of $\theta_k$, we use $\alpha$-smoothness for $L$ and scheme \eqref{eq:energy} to get
\begin{align*}
L(\theta_{k+1})-L(\theta_k)
&\leq \nabla L(\theta_k)^\top (\theta_{k+1}-\theta_k)+\frac{\alpha}{2}\|\theta_{k+1}-\theta_k\|^2\\
&\leq -\frac{1}{\eta_k}\|\theta_{k+1}-\theta_k\|^2_{A_k}+\frac{\alpha}{2}\|\theta_{k+1}-\theta_k\|^2\\
&\leq (-\frac{\lambda_1}{\eta_k}+\frac{\alpha}{2})\|\theta_{k+1}-\theta_k\|^2\\
&\leq -\frac{\lambda_1}{2\eta_k}\|\theta_{k+1}-\theta_k\|^2\quad\text{(since $\eta_k\leq\lambda_1/\alpha$)},
\end{align*}
which further implies
\begin{equation}\label{wdiff}
w_k-w_{k+1}\geq \frac{\lambda_1}{2\eta_k}\|\theta_{k+1}-\theta_k\|^2.  
\end{equation}
The PL property (\ref{plw}) when combined with (\ref{eq:energy}) gives  
\begin{equation}\label{1/w}
\|A_k(\theta_{k+1}-\theta_k)\|^2
= \eta^2_k\|\nabla L(\theta_k)\|^2
\geq 2\mu\eta^2_kw_k \quad\Rightarrow\quad \frac{1}{\sqrt{w_k}}\geq\frac{\sqrt{2\mu}\eta_k}{\lambda_n\|\theta_{k+1}-\theta_k\|}.    
\end{equation}
Using $w_k >w_{k+1}$, we have
\begin{align*}
\sqrt{w_k}-\sqrt{w_{k+1}}
&\geq \frac{1}{2\sqrt{w_k}}(w_k-w_{k+1})
\geq \frac{\sqrt{2\mu}\lambda_1}{4\lambda_n}\|\theta_{k+1}-\theta_k\|.
\end{align*}
Here the last inequality is by (\ref{wdiff}) and (\ref{1/w}).
Taking summation over $k$ gives 
$$
\sum_{k=0}^\infty\|\theta_{k+1}-\theta_k\|\leq \frac{4\lambda_n}{\sqrt{2\mu}\lambda_1} \sqrt{w_0}.
$$
This yields (\ref{ctheta}), which ensures the convergence of $\{\theta_k\}$. 

(iii) With the convexity assumption on $L$, we have 
\begin{align*}
w_k:=L(\theta_k) - L(\theta^*) 
& \leq \nabla L(\theta_k)^\top (\theta_k-\theta^*) \leq \|\nabla L(\theta_k)\|\|\theta_k-\theta^*\|,
\end{align*}
where we used the Cauchy-Schwarz inequality. We claim that for convex $f$, 
\begin{align}\label{cc}
 \|\theta_k-\theta^*\| \leq \|\theta_0-\theta^*\|.  
\end{align}
The proof of this claim is deferred  to the end of this subsection. Thus we have 
\begin{equation}\label{cond2}
\|\nabla L(\theta_k)\|
\geq \frac{w_k}{\|\theta_k-\theta^*\|}
\geq\frac{w_k}{\|\theta_0-\theta^*\|}.
\end{equation}
This when combined with  \eqref{df} (since $\eta_k\leq \lambda_1/\alpha$) leads to 
\begin{equation}\label{wkk}
w_{k+1} \leq w_k -\frac{\eta_k}{2\lambda_n\|\theta_0-\theta^*\|^2}w_k^2.   
\end{equation}
This implies $w_k\geq w_{k+1}$.  Multiplying $\frac{1}{w_{k+1}w_k}$ on both sides gives
$$
\frac{1}{w_k} \leq \frac{1}{w_{k+1}}-\frac{\eta_k}{2\lambda_n\|\theta_0-\theta^*\|^2}\frac{w_k}{w_{k+1}}
\leq \frac{1}{w_{k+1}}-\frac{\eta_k}{2\lambda_n\|\theta_0-\theta^*\|^2}.
$$
Using $\eta_j=\eta \frac{r_{j+1}}{l(\theta_j)}\geq \eta \frac{r_{k}}{l(\theta_0)}$, we proceed to obtain 
\begin{align*}
\frac{1}{w_k} 
& \geq \frac{1}{w_{k-1}} + \frac{\eta_{k-1}}{2\lambda_n\|\theta_0-\theta^*\|^2}\\
& \geq \frac{1}{w_0} + \frac{1}{2\lambda_n\|\theta_0-\theta^*\|^2}\sum_{j=0}^{k-1}\eta_j\\
& \geq \frac{k\eta r_k }{2\lambda_n l(\theta_0)\|\theta_0-\theta^*\|^2}.
\end{align*}
Hence for $\max_{j<k} \eta_j \leq \lambda_1/\alpha$, we have
$$
L(\theta_k)-L(\theta^*)=w_k \leq \frac{c_1\lambda_n\|\theta_0-\theta^*\|^2}{k r_k},\quad c_1=\frac{2l(\theta_0)}{\eta}.
$$
Finally, we prove claim (\ref{cc}). We proceed with
\begin{align*}
\theta_{k+1} -\theta^* & = \theta_{k}-\theta^* - \eta_k A_k^{-1} \nabla L(\theta_k) \\
& =\theta_{k} -\theta^* - \eta_k A_k^{-1}(\nabla L(\theta_k) -\nabla L(\theta^*))\\
&=\theta_{k}-\theta^* -\eta_k A_k^{-1} \left( \int_0^1 \nabla^2 L(\theta^*+s(\theta_k-\theta^*))ds=:B_k\right)
(\theta_k-\theta^*).
\end{align*}
Denote $d_k:=\|\theta_k-\theta^*\|$, we have
\begin{align*}
d_{k+1}& =\|(I-\eta_k A_k^{-1} B_k)(\theta_k-\theta^*)\|\\
& \leq \max_{0\leq \xi \leq \frac{\alpha}{\lambda_1}} |1-\eta_k \xi|d_{k}.
\end{align*}
For $\eta_k \leq \lambda_1 /\alpha$, we have
\begin{align*}
d_{k+1}& \leq  d_{k}, 
\end{align*}
hence $d_k\leq d_0$ for any integer $k$. This completes the proof.

\subsection{Proof of Theorem \ref{thmT}}\label{pf-thmT}

(3) The proof is similar to the proof for Theorem \ref{cvg}. Using $\alpha$-smoothness, we have 
\begin{equation}\label{lsmooth+}
\begin{aligned}
L(\theta_{k+1}) & = L(\theta_k- \eta_k T_k\nabla L(\theta_k))\\
& \leq L(\theta_k)-\eta_k \nabla L(\theta_k)^\top T_k\nabla L(\theta_k)+\frac{\eta^2_k\alpha}{2}\|T_k\nabla L(\theta_k)\|^2.
\end{aligned}
\end{equation}
Since $T_k=P_kG_k^{-1}=G_k^{-1}P_k^\top$, we have 
\begin{align}\label{LPk}\notag
L(\theta_{k+1}) & \leq 
L(\theta_k) -\eta_k \|P_k^\top\nabla L(\theta_k)\|^2_{G_k^{-1}}
+ \frac{\alpha \eta_k^2 }{2\lambda_1}\|P_k^\top\nabla L(\theta_k)\|^2_{G_k^{-1}}\\
& = L(\theta_k)- \eta_k\left(1 -\frac{\eta_k \alpha}{2\lambda_1} \right)\|P_k^\top\nabla L(\theta_k)\|^2_{G_k^{-1}}.
\end{align}
This ensures that $L(\theta_k)$ is strictly decreasing as long as $\|P_k^\top\nabla L(\theta_k)\|^2_{G^{-1}_k}\neq 0$, providing that 
(\ref{eta_bounds}) holds. For the rest of the proof, we refer the readers to (\ref{pf-cvg}).

(4) We now turn to estimate convergence rates, while we use  
\begin{equation*}
\quad P_k= I- G_k^{-1} B^\top (B G_k^{-1} B^\top)^{-1}B
\end{equation*}
so that $B P_k=0$. 

(i) In the proof of (i) for Theorem \ref{thm2}, we  replace  $v_k=G^{-1}_k\nabla l(\theta_k)$ by $G_k^{-1} P_k^\top \nabla l(\theta_k)$, thus obtain the stated convergence bound for $\min_{j<k}\|P_j^\top \nabla L(\theta_j)\|^2$.

(ii) We first show convergence of $\theta_k$. 
Using $\eta_k \leq \frac{\lambda_1}{\alpha}$, we further bound (\ref{LPk}) by
\begin{equation}\label{Lsmo}
L(\theta_{k+1})\leq L(\theta_k)- \frac{\eta_k}{2} \|P_k^\top\nabla L(\theta_k)\|^2_{G_k^{-1}}.
\end{equation} 
This and
$$
\theta_{k+1}-\theta_k=-\eta_kG_k^{-1}P_k^\top\nabla L(\theta_k) \in K,\quad K: =\{p|\quad B p=0\}
$$ 
lead to 
\begin{equation}\label{dft1}
L(\theta_{k+1})-L(\theta_k)\leq -\frac{\lambda_1}{2\eta_k}\|\theta_{k+1}-\theta_k\|^2\quad\Rightarrow\quad w_k-w_{k+1}\geq \frac{\lambda_1}{2\eta_k}\|\theta_{k+1}-\theta_k\|^2. 
\end{equation}
The projected PL property (\ref{PPL}) when combined with (\ref{dft1}) gives  
\begin{equation}\label{dft2}
\|G_k(\theta_{k+1}-\theta_k)\|^2
= \eta^2_k\|P_k^\top\nabla L(\theta_k)\|^2
\geq 2\mu\eta^2_kw_k \quad\Rightarrow\quad \frac{1}{\sqrt{w_k}}\geq\frac{\sqrt{2\mu}\eta_k}{\lambda_n\|\theta_{k+1}-\theta_k\|}.    
\end{equation}
Combining (\ref{dft1}) with  (\ref{dft2}) gives 
$$
\sqrt{w_k}-\sqrt{w_{k+1}}
\geq \frac{1}{2\sqrt{w_k}}(w_k-w_{k+1}) 
\geq \frac{\sqrt{2\mu}\lambda_1}{4\lambda_n}\|\theta_{k+1}-\theta_k\|.
$$
This ensures that $\theta_k$ is a Cauchy sequence, hence 
$\theta_k\to \theta^*$ as $k\to \infty$.

Next we show the convergence rate of  $w_k=L(\theta_k)-L(\theta^*)$. From (\ref{Lsmo}) we have 
\begin{equation}\label{Lsmoo}
w_{k+1}-w_k = L(\theta_{k+1})- L(\theta_k)
\leq -\frac{\eta_k}{2\lambda_n}\|P_k^\top\nabla L(\theta_k)\|^2
\end{equation}
for $\eta_k \leq \frac{\lambda_1}{\alpha}$. 
By the the projected PL condition (\ref{PPL}), we have
$$
\|P_k^\top\nabla L(\theta_k)\|^2 \geq 2\mu w_k, 
$$
hence 
$$
w_{k+1}\leq w_k\Big(1-\frac{\mu \eta_k}{\lambda_n}\Big),
$$
which is exactly  (\ref{dwk}), hence an  induction argument will imply
$$
w_k \leq w_0\exp\Big(-\frac{c_0k r_k}{\lambda_n}\Big), \quad c_0=\frac{\mu \eta }{l(\theta_0)}. 
$$
(iii) For linear constraint $q(\theta)=B\theta -b $, we have 
$\theta_{k+1}-\theta_k \in K=\{p|\quad B p=0\}$ with $B=\nabla q(\theta_k)$, hence $\theta_{k}-\theta^* \in K=\{p|\quad Bp=0\}$.
By the convexity of $L$, 
\begin{align*}
w_k:=L(\theta_k) - L(\theta^*) 
&\leq \nabla L(\theta_k)^\top (\theta_{k}-\theta^*) \\
&= \nabla L(\theta_k)^\top P_k(\theta_{k}-\theta^*) \\
&= (P_k^\top\nabla L(\theta_k))^\top (\theta_{k}-\theta^*) \\
&\leq \|P_k^\top\nabla L(\theta_k)\|\|\theta_{k}-\theta^*\|, 
\end{align*}
which implies
\begin{equation*}
\|P_k^\top\nabla L(\theta_k)\|
\geq \frac{w_k}{\|\theta_k-\theta^*\|}
\geq\frac{w_k}{\|\theta_0-\theta^*\|}.
\end{equation*}
This when combined with (\ref{Lsmoo}) gives
\begin{equation*}
w_{k+1}-w_k\leq -\frac{\eta_k}{2\lambda_n}\|P_k^\top\nabla L(\theta_k)\|^2 \leq 
-\frac{\eta_k w_k^2}{2\lambda_n \|\theta_0-\theta^*\|^2}.
\end{equation*}
This is the same as (\ref{wkk}), and the rest argument in the proof for Theorem \ref{thm2} (iii) applies here.

\section{Construction of $h$}\label{cons}
We present a construction of $h$ for $\M$ when ~\eqref{eq:M=suplevelU} holds with a smooth concave $U$. We define 
\[
h(\theta)=K(U(\theta)), \quad \theta \in \operatorname{int}(\M), 
\]
where $K: (0, \infty) \to \mathbb{R}$ is a smooth function. It is important to note that:
\begin{align}\label{hess}
& \nabla h(\theta) =K'(U(\theta))\nabla U(\theta), \\\notag
& \nabla^2 h(\theta) =K''(U(\theta))\nabla U(\theta)\otimes \nabla U(\theta)+K'(U(\theta))\nabla^2 U(\theta),
\end{align}
and it is clear that $K$ should be chosen to satisfy the following conditions: 
\begin{enumerate}
\item[(i)] $\lim_{s\to0^+}K'(s)=-\infty$,
\item[(ii)] $\forall s>0$, $K''(s)>0$,
\item[(iii)] $\forall s>0$, $K'(s) < 0$ when $U$ is not an affine function.
\end{enumerate}
Two common used functions for $K$ are $-\ln(s)$ and $ s\ln(s)-s$. Additional admissible choices can be found in ~\cite{ABB04}.  
\begin{lemma}
If $K$ is selected to satisfy conditions (i)-(iii) as mentioned above, the strict convexity of $h=K(U)$ can only be ensured if 
\begin{align}\label{con}
\xi\cdot \nabla U(\theta)=0,~\xi^T \nabla^2 U(\theta)\xi=0 ~\Longrightarrow ~\xi=0,\quad \forall \theta \in \operatorname{int}(\M).
\end{align}
\end{lemma}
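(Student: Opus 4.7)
The plan is to prove necessity of~(\ref{con}) by contrapositive. Suppose there exist $\theta_0 \in \operatorname{int}(\M)$ and a nonzero $\xi \in \R^n$ with $\xi\cdot \nabla U(\theta_0)=0$ and $\xi^\top \nabla^2 U(\theta_0)\xi=0$. I would simply substitute this $\xi$ into the quadratic form induced by the Hessian formula in~(\ref{hess}):
\[
\xi^\top \nabla^2 h(\theta_0)\xi = K''(U(\theta_0))\bigl(\xi\cdot \nabla U(\theta_0)\bigr)^2 + K'(U(\theta_0))\, \xi^\top \nabla^2 U(\theta_0)\xi.
\]
Both summands vanish by the assumption on $\xi$, so $\xi^\top \nabla^2 h(\theta_0)\xi=0$ with $\xi\neq 0$. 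Hence $\nabla^2 h(\theta_0)$ fails to be positive definite and $h$ cannot be strictly convex on $\operatorname{int}(\M)$, contradicting the premise.

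Along the way the companion sufficiency direction is essentially free and worth recording. For arbitrary $\xi$ and $\theta$, condition (ii) yields $K''(U(\theta))(\xi\cdot \nabla U(\theta))^2 \ge 0$, while the concavity of $U$ (so $\xi^\top \nabla^2 U(\theta)\xi \le 0$) combined with condition (iii) gives $K'(U(\theta))\,\xi^\top \nabla^2 U(\theta)\xi \ge 0$. Thus the Hessian is already positive semidefinite. Since the two non-negative summands sum to zero only if each is individually zero, $\xi^\top \nabla^2 h(\theta)\xi=0$ reduces to the pair $\xi\cdot \nabla U(\theta)=0$ and $\xi^\top \nabla^2 U(\theta)\xi=0$ (using $K''>0$ and $K'<0$ respectively); imposing (\ref{con}) then forces $\xi=0$, confirming strict convexity.

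The only delicate point, and where I expect to spend the most care, is the affine case flagged in condition (iii). If $U$ is affine, then $\nabla^2 U \equiv 0$ and condition (iii) no longer requires $K'<0$, so the second summand drops out unconditionally and strict convexity depends solely on $\xi\cdot\nabla U(\theta)\neq 0$ for all $\xi\neq 0$; this is compatible with (\ref{con}) collapsing to the single nondegeneracy requirement $\xi\cdot\nabla U=0\Rightarrow \xi=0$. In the generic (genuinely concave) case, condition (iii) ensures $K'(U(\theta))<0$ throughout $\operatorname{int}(\M)$, which is precisely what is needed so that the second summand of the quadratic form can be strictly positive when $\xi^\top \nabla^2 U(\theta)\xi<0$. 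Beyond this case distinction, the argument is a direct application of the ``sum of non-negatives is zero iff each is zero'' observation together with the sign information from (ii), (iii) and concavity of $U$.
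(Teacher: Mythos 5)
Your second paragraph (the ``sufficiency direction'') is precisely the paper's proof: the quadratic form $\xi^\top \nabla^2 h(\theta)\xi$ is a sum of two non-negative terms by (ii), (iii) and the concavity of $U$, it vanishes only when both $\xi\cdot\nabla U(\theta)=0$ and $\xi^\top\nabla^2 U(\theta)\xi=0$, and \eqref{con} then forces $\xi=0$, so the Hessian is positive definite and $h$ is strictly convex. Your handling of the affine case of (iii) is also consistent with this. Despite the ``can only be ensured if'' phrasing of the statement, that implication --- \eqref{con} $\Rightarrow$ strict convexity --- is the content the paper proves and the only direction it uses later.

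The part you present as the main argument, necessity by contrapositive, has a genuine gap. From the existence of a single $\theta_0$ and nonzero $\xi$ with $\xi^\top\nabla^2 h(\theta_0)\xi=0$ you conclude that ``$h$ cannot be strictly convex,'' but a strictly convex function need not have a positive definite Hessian at every point: $x\mapsto x^4$ is strictly convex with vanishing second derivative at the origin. Degeneracy of the Hessian at isolated points (or even on lower-dimensional sets) is compatible with strict convexity, so the contrapositive does not go through. Indeed the necessity claim is false as stated: take $U(\theta)=1-\theta_1^4-\theta_2^2$ (concave) and $K(s)=-\ln s$; condition \eqref{con} fails at any interior point with $\theta_1=0$ via $\xi=(1,0)$, yet $h=K(U)$ is strictly convex on $\operatorname{int}(\M)$, since $\xi^\top\nabla^2 h(\theta)\xi$ vanishes only on a set meeting each line segment in at most one point. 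So you should drop the necessity half and promote your sufficiency paragraph to the proof proper.
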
 
\begin{proof} 
For any $\theta \in \operatorname{int}(\M)$, and $\xi \in \R^n$, based on the properties of $K$, we have 
\[
\xi^T \nabla^2 h(\theta) \xi =K''(U(\theta))|\nabla U(\theta) \xi|^2 + K'(U(\theta)) \xi^T \nabla^2 U(\theta) \xi^T \geq 0.  
\]
Equality holds if and only if 
\[
\xi\cdot \nabla U(\theta)=0 \quad \text{and}\;\;  
\xi^T \nabla^2 U(\theta)\xi=0.  
\]
Hence, \eqref{con} implies the strict convexity of $h$. 
\end{proof}

\begin{remark}
When $K(U)$ does not satisfy (\ref{con}), adding a correction term  is sufficient to ensure the strict convexity of  $h$ in $\M$. For instance, for the domain
\begin{equation}\label{dm}
\M=\{\theta\in\R^n\;|\;U_i(\theta)\geq 0, i\in[p]\}
\end{equation}
with each $U_i$ being concave, a valid choice is 
\[
h(\theta)=\sum_{i=1}^p K(U_i(\theta))+\tilde h(\theta),\quad \theta \in \M,   
\]
where $\tilde h$ is added to ensure the strict convexity of $h$ in $\M$. If $\tilde h$ is strictly convex, then 
\begin{equation*}
\begin{split}
    \xi^T \nabla^2 h(\theta) \xi =&\sum_{i=1}^p \left(K''(U_i(\theta))|\nabla U_i(\theta) \xi|^2 +  K'(U_i(\theta)) \xi^T \nabla^2 U_i(\theta) \xi^T \right) + \xi^T \nabla^2 \tilde{h}(\theta) \xi\\
    \geq& \xi^T \nabla^2 \tilde{h}(\theta) \xi>0,\quad \forall \xi\neq 0.
\end{split}
\end{equation*}
Non-strictly convex alternatives  for $\tilde h$ are  viable, provided they result in a strongly convex function $h$ when combined with $(U_i)_{i=1}^p$.
\begin{example}
We illustrate several specific examples below.
\begin{enumerate}
    \item When $U$ is strictly concave for the domain $\M=\{\theta\in\R^n\;|\; U(\theta) \geq 0\}$,  with $U=1-\theta^T S \theta$, where $S$ symmetric and positive definite, a suitable choice for $h$ is given by   
\begin{equation}\label{h-cir}
h(\theta)=K(1-\theta^T S \theta). 
\end{equation}
\item In case where $\nabla U$ spans the entire space $\mathbb{R}^n$ for 
the domain 
$\M=\{\theta\in\R^n\;|\; a_i\leq \theta_i, i\in[n]\},$
a suitable option is  
\begin{equation}\label{h-b}
h(\theta)=\sum_{i=1}^n K(\theta_i-a_i).     
\end{equation}
\item For the domain $\M = \{\theta\in \mathbb{R}^n~|~a_i \leq \theta_i, i\in [p]\}$, where $1\leq p<n$, a valid choice for $h$ is gen by  
\begin{equation}\label{th}
h(\theta) =\sum_{i=1}^p K(\theta_i-a_i)+\frac{1}{2}\sum_{i=p+1}^n\theta_i^2 ,
\end{equation}
is a valid choice. 
\end{enumerate}
\end{example}
\end{remark}

\section{On the projected PL condition}\label{pplfunc}
For the constrained minimization problem:
\begin{align*}
\min\quad  & L(\theta)=\frac{1}{2}\left(\beta \theta_1^2+\alpha \theta_2^2 \right),  \\
\text{subject to}\quad  & q(\theta)=a\theta_1+b\theta_2 -1=0, 
\end{align*}
where $ab\not=0, \alpha \geq \beta >0$. The minimum point of this problem is given by 
$$
\theta^* = \bigg(\frac{a\alpha}{a^2\alpha+b^2\beta}, \frac{b\beta}{a^2\alpha+b^2\beta}\bigg).
$$
Using $\nabla q(\theta)=(a,b)$ and Equation~\eqref{AGPk+},  the projection matrix is 
$$
P = \frac{1}{a^2+b^2}
\begin{pmatrix}
b^2 & -ab\\
-ab & a^2
\end{pmatrix},
$$
and the projected gradient is 
$$
P^\top \nabla L(\theta) = \frac{b\beta\theta_1-a\alpha\theta_2}{a^2+b^2}(b, -a)^\top.
$$
To verify the projected PL condition, it is observed that
\begin{align*}
\|P^\top \nabla L(\theta)\|^2 
&= \frac{(b\beta\theta_1-a\alpha\theta_2)^2}{a^2+b^2}\\
&= \frac{1}{a^2+b^2}\bigg((b\beta+\frac{a^2\alpha}{b})\theta_1-\frac{a\alpha}{b}\bigg)^2\\
& = \frac{1}{(a^2+b^2)b^2}\bigg((b^2\beta+a^2\alpha)\theta_1-a\alpha\bigg)^2\\
&=\frac{(b^2\beta+a^2\alpha)^2}{(a^2+b^2)b^2}(\theta_1-\theta^*_1)^2,    
\end{align*}
where $\theta_2=(1-a\theta_1)/b$ was used in the second equality. Also, 
\begin{align*}
 L(\theta)-L(\theta^*) 
 & = \frac{1}{2}(\beta\theta^2_1+\alpha\theta^{2}_2) -\frac{1}{2}(\beta\theta^{*2}_1+\alpha\theta^{*2}_2)\\
 & = \frac{1}{2}\beta(\theta^2_1-\theta^{*2}_1) + \frac{1}{2}\alpha(\theta^2_2-\theta^{*2}_2) \\
 & = \frac{1}{2}\beta(\theta^2_1-\theta^{*2}_1) + \frac{1}{2}\alpha \bigg((\frac{1-a\theta_1}{b})^2-(\frac{1-a\theta^*_1}{b})^2\bigg)\\
 & =  \bigg(\frac{1}{2}\beta+\frac{1}{2}\alpha \frac{a^2}{b^2}\bigg)(\theta^2_1 - \theta^{*2}_1) - \frac{a\alpha}{b^2}(\theta_1 - \theta^{*}_1)\\
 & = \frac{a^2\alpha+b^2\beta}{2b^2}(\theta_1 - \theta^{*}_1)\bigg((\theta_1 + \theta^{*}_1)-\frac{2a\alpha}{a^2\alpha+b^2\beta}\bigg)\\
 & = \frac{a^2\alpha+b^2\beta}{2b^2}(\theta_1 - \theta^{*}_1)^2.
\end{align*}
Hence the projected PL condition holds for $\mu>0$ if 
$$
\mu \leq \frac{a^2\alpha+b^2\beta}{a^2+b^2}.
$$

\medskip

\bibliographystyle{amsplain}
\bibliography{ref}

\end{document}